\def\R {\mathbb{R}}
\def\C {\mathcal{C}}
\newtheorem{proposition}{Proposition}[section]
\newtheorem{theorem}[proposition]{Theorem}
\newtheorem{corollary}{Corollary}[section]
\newtheorem{lemma}{Lemma}[section]
\theoremstyle{definition}
\newtheorem{definition}{Definition}[section]
\newtheorem{remark}{Remark}[section]
\numberwithin{equation}{section}
\newtheorem{example}{Example}[section]
\begin{document}

\title[Longtime behavior of multi-term subdiffusion]
{Longtime behavior of  semilinear multi-term\\
fractional in time diffusion}

\author[ N. Vasylyeva]
{Nataliya Vasylyeva}

\address{Institute of Applied Mathematics and Mechanics of NAS of Ukraine
\newline\indent
G.Batyuka st.\ 19, 84100 Sloviansk, Ukraine; and
\newline\indent
Dipartimento di Matematica, Politecnico di Milano
\newline\indent
Piazza Leonardo da Vinci 32, 20133 Milano, Italy}
\email[N.Vasylyeva]{nataliy\underline{\ }v@yahoo.com}

\subjclass[2000]{Primary 35R11,35B45; Secondary 35B65, 26A33,35Q92}
\keywords{semilinear multi-term subdiffusion,  Caputo derivative,
longtime behavior, absorbing sets}

\begin{abstract}
In the paper, the initial-boundary value problems to a semilinear
integro-differential equation with multi-term fractional Caputo
derivatives are analyzed. A particular case of this equation models
oxygen diffusion through capillaries. Under proper assumptions on
the coefficients and a nonlinearity, the longtime behavior (as
$t\to+\infty$) of a solution is discussed. In particular, the
existence of  absorbing sets in suitable functional spaces is
established.
\end{abstract}

\maketitle

\section{Introduction}
\label{s1}

\noindent The key feature of fractional differential equations is
their effectiveness in the description of memory or delay phenomena,
which are structurally present in real-life models such as
underwound sewage survey, oil pollution survey, tumor growth, oxygen
subdiffusion \cite{AG,FKRWS,HH,MGSKA}. We are also refer to the
monograph \cite[Chapter 1]{CDV}, where the authors discuss several
other situations in which anomalous diffusion and fractional
differential equations naturally appear. A multi-term fractional in
time diffusion equation (\textbf{MTFDE}) is a special diffusion
equation where a partial derivative in time is replaced by the
fractional differential operator (\textbf{FDO}), $\mathbf{D}_{t}$,
written as a linear combination of fractional derivatives. For
example,
\begin{equation}\label{1.6}
\mathbf{D}_{t}=q\mathbf{D}_{t}^{\nu}+\sum_{i=1}^{N}q_i\mathbf{D}_{t}^{\nu_i},
\end{equation}
where $q_i$, $q$ are positive, and $0\leq
\nu_1<\nu_2<...<\nu_{M}<\nu<1$, and the symbol
$\mathbf{D}_{t}^{\theta}$ stands for the Caputo fractional
derivative with respect to time of order $\theta\in(0,1)$,
\[
\mathbf{D}_{t}^{\theta}u(x,t)=
\begin{cases}
\frac{1}{\Gamma(1-\theta)}\frac{\partial}{\partial t}\int\limits_{0}^{t}\frac{u(x,s)-u(x,0)}{(t-s)^{\theta}}ds\quad\text{if}\quad \theta\in(0,1),\\
\frac{\partial u}{\partial t}(x,t)\qquad\qquad\qquad\qquad\,\text{
if}\quad \theta=1,
\end{cases}
\]
where $\Gamma$ is the Euler Gamma-function.

 We recall that the
peculiarity of anomalous diffusion described with \textbf{MFTDEs} is
that the mean-squared displacement behaving as $\langle\Delta
x^{2}\rangle\sim t^{\min\{\nu_{i}\}}$ as $t\to+\infty$ (see e.g.
\cite{MMPG}) has sublinear growth in time. It is worth noting that
multi-term fractional diffusion equations
 improve the modeling accuracy of the single-term
model for describing anomalous phenomena. For example, in
\cite{SBMB}, a two-term fractional order diffusion model is proposed
for the total concentration in solute transport, in order to
distinguish explicitly the mobile and immobile status of the solute
exploiting fractional dynamics. The kinetic equation with two
fractional derivatives arises  also quite naturally in describing
subdiffusive motion in velocity fields \cite{MKS}; see also
\cite{KGM} for discussions on the model related to   wave-type
processes.

In this art, motivated by the discussion above, we focus on the
analytical study of initial-boundary value problems to the
semilinear integro-differential  equation with memory terms.
 Let $\Omega\subset\R^{n},$ $n\geq 1,$ be a bounded domain with a smooth
boundary $\partial\Omega,$ and for any  $T>0,$ we set
$$
\Omega_{T}=\Omega\times (0,T)\qquad \text{and}\qquad
\partial\Omega_{T}=\partial\Omega\times [0,T].
$$
We consider the initial-value problem to the nonautonomous
multi-term time-fractional semilinear diffusion equation in the
unknown function $u=u(x,t):\Omega_{T}\to\R$,
\begin{equation}\label{i.1}
\begin{cases}
\mathbf{D}_{t}u-\mathcal{L}_{1}u-\mathcal{K}*\mathcal{L}_{2}u+f(u)=0\quad\text{in}\quad
\Omega_{T},\\
u(x,0)=u_{0}(x)\qquad\qquad\qquad\text{in}\qquad \bar{\Omega},
\end{cases}
\end{equation}
subject to  either the homogenous  Dirichlet boundary condition
\begin{equation}\label{i.2}
u=0\quad\text{on}\quad \partial\Omega_{T},
\end{equation}
or the homogenous Neumann boundary condition
\begin{equation}\label{i.3}
\frac{\partial u}{\partial \mathbf{N}}=0 \quad\text{on}\quad
\partial\Omega_{T},
\end{equation}
where $\mathbf{N}=\{N_{1},...,N_{2}\}$ is the outward normal to
$\Omega$.

\noindent Here the symbol $*$ stands for  the usual time-convolution
product on $(0,t)$,
\[
(\mathfrak{h}_{1}*\mathfrak{h}_{2})(t)=\int\limits_{0}^{t}
\mathfrak{h}_{1}(t-s)\mathfrak{h}_{2}(s)ds.
\]
The fractional differential operator $\mathbf{D}_{t}$ is presented
both in form \eqref{1.6} and  more complex form than \eqref{1.6},
and reads as
\begin{equation}\label{i.4}
\mathbf{D}_{t}u=\begin{cases} \rho\mathbf{D}_{t}^{\nu}
u+\rho_i\sum_{i=1}^{M}\mathbf{D}_{t}^{\nu_{i}}
u,\qquad\,\text{if}\quad
\mathbf{D}_{t}\quad \text{is  the I type \textbf{FDO}},\\
\\
\mathbf{D}_{t}^{\nu}(\rho
u)+\sum_{i=1}^{M}\mathbf{D}_{t}^{\nu_{i}}(\rho_i
u),\quad\text{if}\quad \mathbf{D}_{t}\quad \text{is  the II type
\textbf{FDO}}
\end{cases}
\end{equation}
 with any fixed $\nu\in(0,1)$ and $\nu_{i}\in(0,\nu)$,
and with given positive functions $\rho=\rho(t),$
$\rho_{i}=\rho_{i}(t)$.

Coming to the remaining  operators, $\mathcal{L}_{1}$ is a linear
elliptic operator of the second order with time-dependent
coefficients, while the operator $\mathcal{L}_{2}$ is a first-order
linear differential operator. Their precise forms will be given in
Sections \ref{s3}, where we detail the main assumptions in the
model.

 Published works concerning with the multi-term  fractional
diffusion/wave equations, i.e. the  equation with  the operator
\eqref{1.6} are quite limited in spite of rich literature on their
single-term version.
 Exact solutions of linear multi-term fractional diffusion
equations with $q_i$ being positive constants on bounded domains are
searched employing eigenfunction expansions in \cite{DB,MGSKA}.
Solvability along with a maximum principle and the longtime
asymptotic
 behavior of the solution for the initial-boundary value problems to  the linear subdiffusion equation with \eqref{1.6} are studied in \cite{LHY,LY,LSY}
 (see also references therein). Finally, we quote \cite{KTT},
 where existence and nonexistence of the mild solutions to the Cauchy problem for semilinear subdiffusion equation  with the operator
 \eqref{1.6} are discussed. In particular, the authors obtain the
 Fujita-type and Escobedo-Herrero-type critical exponents for this
 equation and the system.

Coming to the equation in \eqref{i.1} with the operator
$\mathbf{D}_{t}$ given via  \eqref{i.4} and related problems, we
point out two principal differences with respect to the
aforementioned papers. In the case of $\mathbf{D}_{t}$ being the II
type \textbf{FDO}, the first embarrassment deals with the
calculating Caputo fractional derivatives of a product of two
functions: the desired solution $u$ and the prescribed coefficients
$\rho,\rho_{i}$. Incidentally, we recall that the well-known Leibniz
rule does not work in the case of fractional derivatives. Moreover,
the equation in \eqref{i.1} models nonlinear processes by means
$f(u)$ and has additional nonlocal effects via memory terms,
$\mathcal{K}*\mathcal{L}_{2}u$.

The linear version of \eqref{i.1} with \eqref{i.4} subject to
various type boundary conditions with the coefficients in
$\mathbf{D}_{t}$ being alternating sign is discussed in
\cite{PSV,V}. For any fixed time $T,$ the existence and uniqueness
of a solution to semilinear problem \eqref{i.1}, \eqref{i.4} with
the \textbf{DBC},  the \textbf{NBC} or the boundary condition of the
third kind
 are analyzed in \cite{SV,V,V1}. Namely, the well-posedness of these problems in the fractional
H\"{o}lder  and Sobolev spaces is established   in the
one-dimensional case in \cite{SV}. As for the multidimensional case,
the one-valued classical or strong solvability to these problems are
discussed in \cite{V,V1} for each fixed $T>0$.

Nevertheless, a longtime behavior of these solutions  (i.e. as
$t\to+\infty$) to semilinear initial-boundary value problems similar
to \eqref{i.1}-\eqref{i.4} is still open issue. Thus, making proper
assumptions on the coefficients, the given functions and the
nonlinear term   $f$, we aim to fill this gap, providing the
estimates of $\|u(\cdot,t)\|_{W^{1,2}(\Omega)}$ and
$(|\mathcal{K}|*\|u\|_{W^{1,2}(\Omega)})(t)$ for all $t\geq 0$. In
particular, in the case of constant coefficients in the operator
$\mathbf{D}_{t}$ and the absence of the memory terms, we obtain the
estimate
\[
\|u(\cdot,t)\|_{W^{1,2}(\Omega)}\leq
C_{0}+\|u_{0}\|_{W^{1,2}(\Omega)}\mathfrak{g}(t),
\]
 where $C_{0}$ is a positive constant depending only on the structural parameters related with the nonlinear term $f(u)$ . The positive function $\mathfrak{g}(t)$ (defined with multinomial Mittag-Leffler functions
  depending only on the orders $\nu_{i}$, $\nu$ and the coefficients of $\mathbf{D}_{t}$ and $\mathcal{L}_{2}$)  has a decay at infinity of
 $t^{-\nu_{M}}$.
 In  more general case, we achieve
the following bound of a solution to \eqref{i.1}-\eqref{i.4}
\[
\|u\|_{L_{\infty}([0,+\infty),W^{1,2}(\Omega))}\leq
C[1+\|u_{0}\|_{W^{1,2}(\Omega)}].
\]
All these estimates in turn provide the existence of  absorbing sets
in the corresponding functional spaces.


\subsection*{Outline of the paper} The paper is organized as follows: in Section \ref{s2},
we introduce the notations and the functional spaces. The longtime
behavior of a solution (Theorems \ref{t3.1}-\ref{t3.3}), along
with the general hypothesis, is stated in Section \ref{s3}.
Theorems \ref{t3.1} and \ref{t3.0*} concern with the
one-dimensional case,
 while Theorems \ref{t3.2} and \ref{t3.3} describe a
behavior of a solution in the multidimensional case. The proof of
these results are carried out in Section \ref{s5} ($n=1$) and
Section \ref{s6} ($n\geq 2$). Section \ref{s4} is auxiliary  and
contains some technical and preliminary results from fractional
calculus, playing a key role in the course of the investigation.




\section{Functional Spaces and Notation}
\label{s2} \noindent Throughout this art, the symbol $C$ will
denote a generic positive constant, depending only on the
structural quantities of the problem.

In the course of our study, we will exploit
 the fractional H\"{o}lder and Sobolev spaces. To
this end, in what follows we take two arbitrary (but fixed)
parameters
\[
\alpha\in(0,1) \quad\text{and}\quad \nu\in(0,1).
\]
For any non-negative integer $l$,  any $p\geq 1,$ $s\geq 0$, and
any Banach space $(\mathbf{X},\|\cdot\|_{\mathbf{X}}),$  we
consider the usual spaces
\[
\C^{l+\alpha}(\bar{\Omega}),\quad W^{s,p}(\Omega),\quad
L_{p}(\Omega),\quad \C^{s}([0,T],\mathbf{X}),\quad
W^{s,p}((0,T),\mathbf{X}),\quad
L_{\infty}([0,+\infty),\mathbf{X}).
\]
Recall that for non-integer $s$ the space $W^{s,p}$ is called
Sobolev-Slobodeckii space (for its definition and properties see,
e.g.,   \cite[Chapter 1]{AF}, \cite[Chapter 1]{Gr}). Moreover, we
will use the notation $H^{-1}$ to the dual space of
$\overset{0}{W}\,^{1,2}(\Omega)$.

Denoting for $\beta\in(0,1)$
\begin{align*}
\langle v\rangle^{(\beta)}_{x,\Omega_T}&=\sup\Big\{\frac{|v(x_1,t)-v(x_2,t)|}{|x_1-x_2|^{\beta}}:\quad x_{2}\neq x_{1},\quad x_1,x_2\in\bar{\Omega}, \quad t\in[0,T]\Big\},\\
\langle v\rangle^{(\beta)}_{t,\Omega_T}&=\sup\Big\{\frac{|v(x,t_1)-v(x,t_2)|}{|t_1-t_2|^{\beta}}:\quad t_{2}\neq t_{1},\quad x\in\bar{\Omega}, \quad t_1,t_2\in[0,T]\Big\}.
\end{align*}
Then, we assert the following definition.
\begin{definition}\label{d2.1}
 A function $v=v(x,t)$ belongs to the class $\C^{l+\alpha,\frac{l+\alpha}{2}\nu}(\bar{\Omega}_{T})$, for $l=0,1,2,$ if the function $v$ and its corresponding derivatives are continuous and the norms here below are finite:
\begin{equation*}
\|v\|_{\C^{l+\alpha,\frac{l+\alpha}{2}\nu}(\bar{\Omega}_{T})}=
\begin{cases}
\|v\|_{\C([0,T],
\C^{l+\alpha}(\bar{\Omega}))}+\sum_{|j|=0}^{l}\langle
D_{x}^{j}v\rangle^{(\frac{l+\alpha-|j|}{2}\nu)}_{t,{\Omega}_{T}},
\qquad\qquad\qquad\qquad\quad\,
l=0,1,\\
\\
\|v\|_{\C([0,T],
\C^{2+\alpha}(\bar{\Omega}))}+\|\mathbf{D}_{t}^{\nu}v\|_{\C^{\alpha,\frac{\alpha}{2}\nu}
(\bar{\Omega}_{T})}+\sum_{|j|=1}^{2}\langle
D_{x}^{j}v\rangle^{(\frac{2+\alpha-|j|}{2}\nu)}_{t,{\Omega}_{T}},\quad l=2.
\end{cases}
\end{equation*}
\end{definition}
\noindent In a similar way, for $l=0,1,2,$ we introduce the space $\C^{l+\alpha,\frac{l+\alpha}{2}\nu}(\partial\Omega_{T})$.

The properties of these spaces have been discussed in
\cite[Section 2]{KPV3}. As for the limiting case $\nu=1$, these
classes boil down to the usual parabolic H\"{o}lder spaces
 (see
e.g.\cite[(1.10)-(1.12)]{LSU}).

Throughout this art, for any $\theta>0$, we use the notation
\[
\omega_{\theta}=\frac{t^{\theta-1}}{\Gamma(\theta)}
\]
and define the fractional Riemann-Liouville integral and the
derivative of order $\theta$, respectively, of a function
$v=v(x,t)$ with respect to time $t$ as
\[
I_{t}^{\theta}v(x,t)=(\omega_{\theta}*v)(x,t)\qquad
\partial_{t}^{\theta}v(x,t)=\frac{\partial^{\lceil\theta\rceil}}{\partial
t^{\lceil\theta\rceil}}(\omega_{\lceil\theta\rceil-\theta}*v)(x,t),
\]
where $\lceil\theta\rceil$ is the ceiling function of $\theta$
(i.e. the smallest integer greater than or equal to $\theta$).

It is apparent that, for $\theta\in(0,1)$ there holds
\[
\partial_{t}^{\theta}v(x,t)=\frac{\partial}{\partial
t}(\omega_{1-\theta}*v)(x,t).
\]
Accordingly, the Caputo fractional derivative of the order
$\theta\in(0,1)$ to the function $v(x,t)$ can be represented as
\begin{align}\label{2.2}\notag
\mathbf{D}_{t}^{\theta}v(\cdot,t)&=\frac{\partial}{\partial
t}(\omega_{1-\theta}*v)(\cdot,t)-\omega_{1-\theta}(t)v(\cdot,0)\\
&
=
\partial_{t}^{\theta}v(\cdot,t)-\omega_{1-\theta}(t)v(\cdot,0)
\end{align}
provided that both derivatives exist.

Finally, setting
\[
\bar{\beta}=\{\beta_1,\beta_2,...,\beta_{m}\},
\]
 we will notice
 the multinomial Mittag-Leffler function
introduced in \cite{HL} (see (3.16) therein) and originally named
as "multivariate" Mittag-Leffler function:
\[
E_{\bar{\beta},\beta_{0}}(\bar{z})=
E_{(\beta_{1},...,\beta_{m})\beta_{0}}(z_{1},...,z_{m})=\sum_{k=0}^{+\infty}\sum_{k_1+...+k_{m}=k}
\frac{k!}{k_{1}!...k_{m}!}\frac{\prod_{i=1}^{m}z_{j}^{k_{j}}}{\Gamma(\beta_{0}+\sum_{j=1}^{m}\beta_{j}k_{j})},
\]
where $z_{j}\in\mathbb{C}$, $\beta_{j}>0,$ $\beta_{0}\in\R$.

It is worth noting that these functions are utilized for solving
multi-term fractional differential equations with constant
coefficients by the operational approach (see e.g. \cite{LG,L}).

For $0<\beta_{m}<\beta_{m-1}<...<\beta_{1}\leq\beta_{0}$ and
$\bar{d}=\{d_1,d_2,...,d_{m}\}$, $d_{i}\in\R$, we denote
\begin{equation}\label{4.1}
\mathcal{E}_{\bar{\beta},\beta_{0}}(t;\bar{d})=t^{\beta_{0}-1}E_{(\beta_{1},...,\beta_{m})\beta_{0}}(-d_{1}t^{\beta_1},...,-d_{m}t^{\beta_{m}}).
\end{equation}



\section{Main Results}
\label{s3}

\noindent
 First, we state our general hypothesis on the structural
terms of the model.
\begin{description}
 \item[h1 (Conditions on the fractional order of the derivatives)] We assume that
\begin{equation*}
 0<\nu_1<\nu_2<...<\nu_{M}<\nu<1.
\end{equation*}

\smallskip
    \item[h2 (Conditions on the operators)]
The operators appearing in the equation in \eqref{i.1} read
\begin{align*}
\mathcal{L}_{1}&=
\begin{cases}
\frac{\partial}{\partial x}\big(a_{1}(x,t)\frac{\partial}{\partial
x}\big)+a_{0}(x,t),\quad n=1,\\
\\
\sum_{i=1}^{n} \frac{\partial^{2}}{\partial
x_{i}^{2}}+a_{0}(x,t),\qquad\quad n\geq 2,
\end{cases}
\\
\mathcal{L}_{2}&=\sum_{i=1}^{n}b_{i}(x,t)\frac{\partial}{\partial
x_i}+b_{0}(x,t).
\end{align*}
     There are positive constants $\delta$ and
         $\delta_{i}$ $i=0,1$ such that
     \begin{align*}
-a_0(x,t)&\geq\delta_0>0,\quad a_{1}(x,t)\geq \delta_{1}>0,\\
 \rho(t)&\geq \delta>0,\quad \rho_{k}(t)\geq 0\quad
k=1,2,...,M,
    \end{align*}
   for any $x\in\bar{\Omega}$ and $t\geq 0$.
       \smallskip
    \item[h3 (Regularity of the coefficients)]
    We require
  \[
      b_{j}, a_{0},a_{1}, \frac{\partial a_{1}}{\partial x}, \frac{\partial a_{0}}{\partial x_{i}}\in
   L_{\infty}([0,+\infty),\C(\bar{\Omega})),\quad
   j=0,1,..., n,\quad i=1,..,n,\]
   \begin{equation}\label{3.0}
        \rho,\rho_k,\frac{d \rho}{d t},\frac{d \rho_{k}}{d t}\in L_{\infty}([0,+\infty))\quad \text{and}\quad \frac{d \rho}{d t},\frac{d
        \rho_{k}}{d
        t}\geq 0\quad \text{for all}\quad t\geq 0, \quad
        k=1,2,...,M.
\end{equation}

\noindent Moreover, there exist nonnegative $T_{0}, T_{k}$,
$k=1,2,...,M,$ such that
\[
\rho(t)=\rho(T_0)\quad \text{for all}\quad t\geq T_{0},\quad
\rho_{k}(t)=\rho(T_k)\quad \text{for all}\quad t\geq T_{k}.
\]

    \smallskip
        \item[h4 (Condition on the given functions)]
        \[
u_0\in
\begin{cases}
\overset{0}{W}\,^{1,2}(\Omega)\quad \text{in the \bf{DBC} case},\\
W^{1,2}(\Omega)\quad \text{in the \bf{NBC} case},
\end{cases}
        \]
$$
\mathcal{K}(t)\in L_{1}(\R^{+})
$$
and for some $\nu^{*}\in[0,\nu]$ the kernel $\mathcal{K}$
satisfies the bound
 \[
|\mathcal{K}|\leq C t^{-\nu^{*}}
\]
for all $t\geq 0$.
    \smallskip

    \item[h5 (Conditions on the nonlinearity)]
    We assume that $f\in\C^{1}(\R)$, and for some nonnegative
 constants $L_{i}$, i=1,2,3,4, and $\gamma\geq 1,$ the
 inequalities hold
        \[
        \begin{cases}
               |f(u)|\leq L_1(1+|u|^{\gamma}),\\
        uf(u)\geq -L_2+L_3|u|^{\gamma+1},\\
        f'(u)\geq -L_{4}.
        \end{cases}
        \]
                       \end{description}

    We notice that the assumptions on the nonlinear term tell us that $f(u)$ is  a polynomial of odd degree with the positive
leading coefficient (see \cite{GPM}).

Our first main result is related to a behavior of a solution to
\eqref{i.1}-\eqref{i.3} for any $t>0$, which, in some particular
cases  (see below), can be reformulated as an existence of absorbing
sets.
\begin{theorem}\label{t3.1}
Let $n=1$ and conditions h1-h5 hold. If $\frac{\partial
a_{1}}{\partial x}\not\equiv 0$, then we additionally assume that
there is a positive value $\delta^{*}\in(0,\delta_{1})$ such that
\begin{equation}\label{3.1}
\delta^{*}(\delta_1+\delta_0)-\bigg\|\frac{\partial
a_{1}}{\partial
x}\bigg\|^{2}_{L_{\infty}([0,+\infty),\C(\bar{\Omega}))}>0.
\end{equation}
Then a solution  of \eqref{i.1}-\eqref{i.3} satisfies the estimates
\begin{equation}\label{3.2}
\|u\|_{L_{\infty}([0,+\infty),W^{1,2}(\bar{\Omega}))} \leq
C_{0}[1+\|u_{0}\|_{W^{1,2}(\bar{\Omega})}],
\end{equation}
\begin{equation}\label{3.2*}
\|u\|_{L_{\infty}([0,+\infty),\C(\bar{\Omega}))}+\|\mathcal{K}*u\|_{L_{\infty}([0,+\infty),W^{1,2}(\bar{\Omega}))}\leq
C_{1}[1+\|u_{0}\|_{W^{1,2}(\bar{\Omega})}].
\end{equation}
Here the positive constants $C_{0}$ and $C_{1}$ depend only on the
structural parameters in the model, the Lebesgue measure of
$\Omega$, and the corresponding norms of the coefficients and of the
memory kernel.
\end{theorem}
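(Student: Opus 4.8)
The plan is to run a two-level energy argument in $x$, turning \eqref{i.1} into a scalar fractional differential inequality for $\Phi(t)=\|u(\cdot,t)\|_{W^{1,2}(\Omega)}^{2}$, and then to close it with a fractional comparison (Gr\"onwall) principle based on the multinomial Mittag-Leffler functions \eqref{4.1}. Throughout, the fractional terms are treated by the convexity inequality $\int_{\Omega}u\,\mathbf{D}_{t}^{\theta}u\,dx\ge\tfrac12\mathbf{D}_{t}^{\theta}\|u\|_{L_{2}(\Omega)}^{2}$ (one of the auxiliary results of Section \ref{s4}); in the II type case this is applied after isolating the coefficients $\rho,\rho_i$, where the sign conditions $\rho,\rho_i\ge 0$ and $\tfrac{d\rho}{dt},\tfrac{d\rho_i}{dt}\ge 0$ from \eqref{3.0} guarantee that the product-derivative remainders are nonnegative and may be discarded.

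First, multiplying the equation in \eqref{i.1} by $u$ and integrating over $\Omega$, integrating $\mathcal{L}_{1}$ by parts (the boundary terms vanish under \eqref{i.2} or \eqref{i.3}) and using the coercivity in \textbf{h2} together with $uf(u)\ge-L_2+L_3|u|^{\gamma+1}$ from \textbf{h5}, I obtain
\[ \tfrac12\,\mathbf{D}_{t}\|u\|_{L_{2}(\Omega)}^{2}+\delta_1\|u_x\|_{L_{2}(\Omega)}^{2}+\delta_0\|u\|_{L_{2}(\Omega)}^{2}+L_3\|u\|_{L_{\gamma+1}(\Omega)}^{\gamma+1}\le L_2|\Omega|+\Big|\int_{\Omega}(\mathcal{K}*\mathcal{L}_{2}u)\,u\,dx\Big|. \]
Next, to recover the full $W^{1,2}$-norm I test with $-u_{xx}$ (legitimate for both boundary conditions). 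The fractional term yields $\int_\Omega u_x\,\mathbf{D}_{t}^{\theta}u_x\,dx\ge\tfrac12\mathbf{D}_{t}^{\theta}\|u_x\|_{L_{2}(\Omega)}^{2}$, the leading elliptic term gives $\int_\Omega a_1 u_{xx}^2\,dx\ge\delta_1\|u_{xx}\|_{L_{2}(\Omega)}^{2}$, and the key nonlinear contribution is $\int_\Omega f(u)(-u_{xx})\,dx=\int_\Omega f'(u)u_x^2\,dx\ge-L_4\|u_x\|_{L_{2}(\Omega)}^{2}$, which is exactly where $f'\ge-L_4$ from \textbf{h5} is used; the last term is reabsorbed through the interpolation $\|u_x\|^2\le\varepsilon\|u_{xx}\|^2+C_\varepsilon\|u\|^2$.

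When $\partial a_1/\partial x\not\equiv0$ the expansion $\mathcal{L}_1u=a_1u_{xx}+(\partial_x a_1)u_x+a_0u$ produces the cross term $\int_\Omega(\partial_x a_1)u_x u_{xx}\,dx$; estimating it by Young's inequality against $\delta^*\|u_{xx}\|^2$ and the available gradient dissipation is precisely what the smallness hypothesis \eqref{3.1} secures, so after forming a suitable linear combination of the two tested identities the combined zeroth- and gradient-order dissipation stays strictly positive, i.e. the left-hand side controls $c\,\Phi$ with some $c>0$. For the memory term I use Cauchy-Schwarz in $x$ and $\|\mathcal{L}_2u\|_{L_2}\le C\|u\|_{W^{1,2}}$ to bound $|\int_\Omega(\mathcal{K}*\mathcal{L}_2u)\,u\,dx|$ by $\varepsilon\|u_x\|^2$ plus $\|u\|_{L_2}(|\mathcal{K}|*\|u\|_{W^{1,2}})$; the product is then split by Young, and the resulting square of a convolution is linearized through the convolution Cauchy-Schwarz inequality $(|\mathcal{K}|*\sqrt{\Phi})^2\le\|\mathcal{K}\|_{L_1}(|\mathcal{K}|*\Phi)$ (using $\mathcal{K}\in L_1(\R^+)$ from \textbf{h4}). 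Collecting everything gives the closed integro-differential inequality $\mathbf{D}_t\Phi+c\Phi\le C+C'(|\mathcal{K}|*\Phi)$.

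Finally, I compare this inequality with the scalar multi-term fractional equation whose resolvent is $\mathcal{E}_{\bar\beta,\beta_0}$ from \eqref{4.1}: the decay of these Mittag-Leffler functions, together with the stabilization $\rho(t)=\rho(T_0)$, $\rho_k(t)=\rho(T_k)$ for large $t$ (\textbf{h3}) and the $L_1$-smallness of $\mathcal{K}$, yields a bound on $\Phi$ that is uniform in $t\in[0,+\infty)$, which is \eqref{3.2}. Estimate \eqref{3.2*} then follows because in one space dimension $W^{1,2}(\Omega)\hookrightarrow\C(\bar\Omega)$, while the bound on $\|\mathcal{K}*u\|_{W^{1,2}}$ comes from one further application of Young's convolution inequality with $\|\mathcal{K}\|_{L_1}$. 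I expect the main obstacle to be the memory coupling: it rules out a pointwise fractional Gr\"onwall argument, so the crux is proving a comparison lemma for $\mathbf{D}_t\Phi+c\Phi\le C+C'(|\mathcal{K}|*\Phi)$ that retains a finite bound as $t\to+\infty$, for which the integrability $\mathcal{K}\in L_1$ and the decay $|\mathcal{K}|\le Ct^{-\nu^*}$, $\nu^*\in[0,\nu]$, are essential; a secondary difficulty is ensuring that the II type product-derivative terms do not spoil the sign structure exploited above.
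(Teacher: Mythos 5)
Your proposal reproduces the paper's overall skeleton (an energy functional $V(t)=\|u(\cdot,t)\|^2_{W^{1,2}(\Omega)}$, reduction to a scalar multi-term fractional differential inequality, comparison with the multinomial Mittag--Leffler resolvent), but it contains genuine gaps at exactly the points the paper identifies as the real difficulties. First, your handling of the II type operator is incorrect: you claim that, after isolating $\rho,\rho_i$, the product-derivative remainders are nonnegative (because $\rho,\rho_i$ are nondecreasing) and may be discarded. They are not. By Proposition \ref{pA.2}, the remainder is proportional to $\mathcal{J}_\theta(t;\rho_\theta^2,u^2)$, whose integrand contains the factor $u^2(s)-u^2(0)$, which has no sign regardless of the monotonicity of $\rho_\theta$. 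The paper's Corollaries \ref{cA.3} and \ref{cA.4} show these remainders are only bounded \emph{below} by negative quantities of order $\|u_0\|^2\,T_{\rho_\theta}^{1-\theta}\|\rho_\theta'\|_{L_\infty}$ --- this is precisely where the stabilization hypothesis $\rho_\theta(t)=\rho_\theta(T_\theta)$ for $t\ge T_\theta$ is used, and these terms survive into the forcing $F(t)$ of \eqref{5.7*}; they cannot simply be dropped. Second, under the Dirichlet condition your identity $\int_\Omega f(u)(-u_{xx})\,dx=\int_\Omega f'(u)u_x^2\,dx$ silently discards the boundary term $-f(0)\,u_x\big|_{\partial\Omega}$, which does not vanish (only $u$, not $u_x$, is zero on $\partial\Omega$, and $f(0)\neq0$ in general). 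The paper avoids this by the decomposition $\mathcal{U}_3=\int_\Omega f(u)u\,dx+\int_\Omega[L_4u-f(0)]u_{xx}\,dx-\int_\Omega[f(u)-f(0)+L_4u]u_{xx}\,dx$, in which the bracket of the last integral vanishes on $\partial\Omega_T$, so its integration by parts produces no boundary contribution.

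Third, and most seriously, the uniform-in-time bound --- the actual content of \eqref{3.2} --- is left as an acknowledged open step in your final paragraph: you say the ``crux is proving a comparison lemma'' for $\mathbf{D}_t\Phi+c\Phi\le C+C'(|\mathcal{K}|*\Phi)$ that stays finite as $t\to+\infty$, but you do not prove it, and it is not routine, because the coefficients $\rho,\rho_i$ inside $\mathbf{D}_t$ are time-dependent, so one cannot directly compare with a constant-coefficient Mittag--Leffler resolvent. The paper supplies exactly this missing mechanism in Steps 2--3: it first proves \eqref{3.2} on $[0,2T^*]$ via the Gronwall-type inequality of Proposition \ref{pA.0}(iii); it then passes to $\mathcal{V}=(1-\xi)V$ with a cut-off $\xi$ equal to $1$ on $[0,T^*]$, so that $\mathcal{V}$ has zero initial datum and, after the shift $\sigma=t-T^*$, solves a \emph{constant-coefficient} fractional ODE (the coefficients having stabilized for $t\ge T^*$); the commutator errors between $\mathfrak{S}$ and the cut-off are controlled in Proposition \ref{p5.1}. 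Only then can the solution be represented via \cite[Theorem 4.1]{LG}, the complete monotonicity and asymptotics of $\mathcal{E}_{\bar\beta,\nu}$ (Lemma \ref{lA.1}) be invoked, the kernel $G_1=\mathcal{E}_{\bar\beta,\nu}*(\mathcal{K}_0/\rho^{*})$ be shown to lie in $L_1(\R^+)$ (Lemma \ref{lA.1}(iv)), and the argument closed by the Gronwall inequality of \cite[Theorem 15]{Dr}. Without this (or an equivalent device) your scalar inequality yields at best bounds growing with the time horizon, not the uniform estimate \eqref{3.2}; so the proposal is incomplete precisely at the theorem's core.
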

\begin{example}
The following are examples of the coefficients $a_{0}$ and $a_{1}$
satisfying condition \eqref{3.1}:
\[
\Omega=(1,2),\quad a_{1}(x,t)=x,\quad a_{0}(x,t)=-3-e^{-t}.
\]
It is apparent that in this case
\[
\delta_{1}=1, \quad \delta_{0}=3
\]
and hence, $\eqref{3.1}$ is fulfilled with any
$\delta^{*}\in(1/4,1)$.
\end{example}

Coming to the multidimensional case, we claim the following.
\begin{theorem}\label{t3.2}
Let $n\geq 2$. Under assumptions h1-h5, a solution of
\eqref{i.1}-\eqref{i.3} satisfies the bound \eqref{3.2} and
\begin{equation*}
\|\mathcal{K}*u\|_{L_{\infty}([0,+\infty),W^{1,2}(\bar{\Omega}))}\leq
C_{1}[1+\|u_{0}\|_{W^{1,2}(\bar{\Omega})}].
\end{equation*}
\end{theorem}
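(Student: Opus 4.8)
The plan is to establish the two bounds as uniform-in-time a priori energy estimates, obtained by testing the equation in \eqref{i.1} against $u$ and against $-\Delta u$, and then closing the resulting multi-term fractional differential inequalities by a Gr\"onwall-type comparison built on the multinomial Mittag-Leffler functions $\mathcal{E}_{\bar\beta,\beta_0}(t;\bar d)$. Throughout I would treat the boundary conditions \eqref{i.2} and \eqref{i.3} in parallel, noting that all boundary terms produced by integration by parts vanish because $u=0$ (resp.\ $\partial u/\partial\mathbf N=0$) on $\partial\Omega_T$, whence $\mathbf{D}_t^{\nu}u$ and $\mathbf{D}_t^{\nu_i}u$ inherit the homogeneous boundary data.

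First I would derive the zeroth-order estimate. Multiplying the equation by $u$ and integrating over $\Omega$, the crucial ingredient is the pointwise-in-time convexity inequality $\int_\Omega u\,\mathbf{D}_t^{\theta}u\,dx\ge \tfrac12\mathbf{D}_t^{\theta}\|u\|_{L_2(\Omega)}^2$ (for each $\theta\in\{\nu,\nu_i\}$), which I expect to be recorded among the technical results of Section \ref{s4}. Coercivity of $\mathcal L_1$ follows from $-a_0\ge\delta_0$, giving $-\int_\Omega u\,\mathcal L_1u\,dx\ge\|\nabla u\|_{L_2(\Omega)}^2+\delta_0\|u\|_{L_2(\Omega)}^2$, while the dissipativity hypothesis $uf(u)\ge -L_2+L_3|u|^{\gamma+1}$ turns the nonlinear term into a favorable contribution up to the constant $L_2|\Omega|$. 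The memory term is controlled by Cauchy--Schwarz and Young as $|\int_\Omega u\,(\mathcal K*\mathcal L_2u)\,dx|\le\varepsilon\|\nabla u\|_{L_2(\Omega)}^2+C_\varepsilon\|(\mathcal K*\mathcal L_2u)(\cdot,t)\|_{L_2(\Omega)}^2$, the last factor being estimated through Young's convolution inequality using $\mathcal K\in L_1(\R^+)$ and $|\mathcal K|\le Ct^{-\nu^*}$, so that it is dominated by $C(|\mathcal K|*\|u\|_{W^{1,2}(\Omega)})(t)$. This yields a multi-term fractional inequality for $\|u(\cdot,t)\|_{L_2(\Omega)}^2$.

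Next, to reach the full $W^{1,2}$ bound \eqref{3.2}, I would multiply the equation by $-\Delta u$ and integrate. Since $\mathbf D_t^{\theta}$ commutes with the spatial gradient, integration by parts gives $\int_\Omega(-\Delta u)\mathbf D_t^{\theta}u\,dx=\int_\Omega\mathbf D_t^{\theta}\nabla u\cdot\nabla u\,dx\ge\tfrac12\mathbf D_t^{\theta}\|\nabla u\|_{L_2(\Omega)}^2$, and $-\int_\Omega(-\Delta u)\mathcal L_1u\,dx\ge\|\Delta u\|_{L_2(\Omega)}^2+\delta_0\|\nabla u\|_{L_2(\Omega)}^2$ up to the cross term $\int_\Omega u\,\nabla a_0\cdot\nabla u\,dx$, absorbed via $\nabla a_0\in L_\infty$ and the zeroth-order estimate. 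Because the principal part of $\mathcal L_1$ is here the constant-coefficient Laplacian, testing with $-\Delta u$ produces no uncontrolled first-order commutator; this is the structural reason why Theorem \ref{t3.2} needs no analogue of the smallness condition \eqref{3.1} present in the one-dimensional Theorem \ref{t3.1}. The decisive point is that the nonlinearity now appears as $\int_\Omega(-\Delta u)f(u)\,dx=\int_\Omega f'(u)|\nabla u|^2\,dx\ge -L_4\|\nabla u\|_{L_2(\Omega)}^2$, which is exactly why h5 requires $f'(u)\ge -L_4$; the bad term $L_4\|\nabla u\|^2$ is carried to the right-hand side of the Gr\"onwall argument, and the memory contribution is again reduced to $C(|\mathcal K|*\|u\|_{W^{1,2}(\Omega)})(t)$ by Young's inequality.

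The proof is concluded by a fractional comparison principle applied to the closed system of multi-term fractional differential inequalities for the energy $E(t)=\|u(\cdot,t)\|_{W^{1,2}(\Omega)}^2$ coupled to $(|\mathcal K|*E)(t)$: I would dominate $E(t)$ by the solution of the associated multi-term fractional integral equation whose kernel is expressed through $\mathcal E_{\bar\beta,\beta_0}(t;\bar d)$, and exploit the boundedness and $t\to+\infty$ decay of these Mittag-Leffler kernels to obtain $E(t)\le C_0^2[1+\|u_0\|_{W^{1,2}(\Omega)}^2]$ uniformly for $t\ge 0$, which is \eqref{3.2}. The estimate for $\mathcal K*u$ then follows at once from \eqref{3.2} and $\mathcal K\in L_1(\R^+)$, since $\|(\mathcal K*u)(\cdot,t)\|_{W^{1,2}(\Omega)}\le\|\mathcal K\|_{L_1(\R^+)}\,\|u\|_{L_\infty([0,+\infty),W^{1,2}(\Omega))}$. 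The main obstacle I anticipate is twofold: when $\mathbf D_t$ is the II type operator, the products $\mathbf D_t^{\nu}(\rho u)$ and $\mathbf D_t^{\nu_i}(\rho_i u)$ admit no Leibniz rule, so replacing $\int_\Omega u\,\mathbf D_t^{\nu}(\rho u)\,dx$ by a coercive multiple of $\mathbf D_t^{\nu}\|u\|^2$ requires a commutator estimate whose sign is controlled precisely by the monotonicity $\dot\rho\ge0$ and the eventual constancy $\rho\equiv\rho(T_0)$ granted by h3; and, second, keeping every constant independent of $T$ so that the Gr\"onwall step delivers a genuinely global-in-time bound rather than one degenerating as $t\to+\infty$.
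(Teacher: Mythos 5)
Your energy stage coincides with the paper's: Section \ref{s6} multiplies the equation by $v=u-\Delta u$ (your two tests combined into one), and the estimates of $\mathfrak{W}_1,\mathfrak{W}_2,\mathfrak{W}_4$ are exactly your coercivity, memory, and fractional-product bounds, leading to the inequality \eqref{5.7} for $V(t)=\int_\Omega[u^2+|\nabla u|^2]dx$. However, your treatment of the nonlinearity would fail. First, under the \textbf{DBC} the identity $\int_\Omega(-\Delta u)f(u)\,dx=\int_\Omega f'(u)|\nabla u|^2dx$ is wrong unless $f(0)=0$: the boundary term $f(0)\int_{\partial\Omega}\frac{\partial u}{\partial\mathbf N}\,dS$ survives. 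More seriously, after bounding $\int_\Omega f'(u)|\nabla u|^2dx\ge -L_4\|\nabla u\|^2_{L_2(\Omega)}$ you propose to carry $L_4\|\nabla u\|^2$ to the right-hand side of the Gronwall argument. Since h5 imposes no smallness on $L_4$, this term cannot in general be absorbed by the coercive gradient contribution, and a fractional Gronwall inequality with a positive multiple of the energy on the right produces a bound that grows in time (Mittag--Leffler functions of positive argument), not the uniform bound \eqref{3.2}. The paper avoids this entirely: it never lower-bounds $\int f'(u)|\nabla u|^2$; instead it writes $f(u)=[f(u)-f(0)+L_4u]+f(0)-L_4u$, integrates by parts only in the bracket (which vanishes on $\partial\Omega_T$, which also fixes the boundary issue), so the gradient term appears as $\int_\Omega|\nabla u|^2[f'(u)+L_4]dx\ge0$ and is dropped; the residual $L_4\int_\Omega u\,\Delta u\,dx$ is handled by the Cauchy inequality, and the resulting large multiple of $\|u\|^2_{L_2(\Omega)}$ is absorbed by the dissipative term $L_3\int_\Omega|u|^{1+\gamma}dx$ through Young's inequality --- this is precisely where $\gamma\ge1$ enters, and why the case $\gamma\in(0,1)$ requires the extra smallness condition \eqref{3.4*} of Theorem \ref{t3.3}.

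The second genuine gap is in your closing step. Your ``fractional comparison principle'' is applied to an inequality whose fractional derivatives carry the time-dependent weights $\rho(t),\rho_i(t)$, but the multinomial Mittag--Leffler representation (\cite[Theorem 4.1]{LG}) is available only for constant coefficients, and one cannot simply replace $\rho(t)\mathbf D_t^\nu V$ by $\delta\,\mathbf D_t^\nu V$ because $\mathbf D_t^\nu V$ has no sign. The paper needs all three steps of Section \ref{s5} to resolve exactly this: (i) a bound on the finite interval $[0,2T^*]$, $T^*=\max\{1,T_0,\dots,T_M\}$, obtained by applying $I_t^\nu$ to \eqref{5.7} and invoking the Gronwall-type statement (iii) of Proposition \ref{pA.0}; (ii) the cut-off construction $\mathcal V=(1-\xi)V$ and the time shift $\sigma=t-T^*$, after which the eventual-constancy clause of h3 makes $\rho(\sigma+T^*),\rho_i(\sigma+T^*)$ genuine constants and yields the constant-coefficient Cauchy problem \eqref{5.13}; (iii) only then the representation via $\mathcal E_{\bar\beta,\beta_0}$, Lemma \ref{lA.1}, and the Gronwall inequality of \cite[Theorem 15]{Dr}. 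You cite the eventual constancy of $\rho$ only to control the sign of the commutator $\mathcal J_\theta$ for the II-type operator; without the cut-off/shift reduction (or a comparison principle valid for variable-coefficient multi-term operators, which you neither state nor prove), the uniform-in-time conclusion does not follow from your sketch.
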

\noindent   In the case of the constant coefficients in the operator
$\mathbf{D}_{t}$ and $\mathcal{K}\equiv 0$, the proof of Theorems
\ref{t3.1} and \ref{t3.2} provide more accurate estimates of
solution $u.$  Indeed, setting
\begin{align*}
\bar{\beta}&=(\nu,\nu-\nu_1,\nu-\nu_2,...,\nu-\nu_{M}),\\
\mathcal{E}_{\bar{\beta},\nu}&=t^{\nu-1}
E_{\bar{\beta},\nu}\bigg(-t^{\nu}\frac{C_{0}^{*}}{\rho},-t^{\nu-\nu_{1}}\frac{\rho_{1}}{\rho},...,
-t^{\nu-\nu_{M}}\frac{\rho_{M}}{\rho}\bigg),
\end{align*}
where the positive value $C_{0}^{*}$ depends only on the
corresponding norms of the coefficients in $\mathcal{L}_{1}$, we
claim the following.
\begin{corollary}\label{c1}
Let assumptions of Theorem \ref{t3.1} or \ref{t3.2} hold,
$\mathcal{K}\equiv 0$ and let  $\rho_{i},$ $\rho$ be constant.  Then
the estimates hold
\begin{align*}
\|u(\cdot, t)\|_{\C(\bar{\Omega})}&\leq
C_{0}[1+\|u_{0}\|_{W^{1,2}(\bar{\Omega})}\mathfrak{g}(t)],\quad\text{if
}\quad n=1;\\
\|u(\cdot, t)\|_{W^{1,2}(\Omega)}&\leq
C_{0}[1+\|u_{0}\|_{W^{1,2}(\bar{\Omega})}\mathfrak{g}(t)],\quad\text{if
}\quad n\geq 2
\end{align*}
for any $t\geq 0$.

\noindent Here the positive function
\[
\mathfrak{g}(t)=\bigg|1-\frac{C_{0}^{*}}{\rho}\mathcal{E}_{\bar{\beta},\nu}*1\bigg|+
\sum_{i=1}^{M}\frac{\rho_{i}}{\rho}(\mathcal{E}_{\bar{\beta},\nu}*\omega_{1-\nu_i})(t)+
(\mathcal{E}_{\bar{\beta},\nu}*\omega_{1-\nu})(t)
\]
 vanishes for
$t\to+\infty$ and has the  asymptotic behavior
\[
\mathfrak{g}\sim
\begin{cases}
\frac{\rho}{C_{0}^{*}}\big[
\sum_{i=1}^{M}\frac{\rho_{i}}{\rho}t^{-\nu_{i}}+ t^{-\nu}\big],\quad
\text{if}\quad t\to+\infty,\\
2,\qquad\qquad\qquad \qquad\qquad\quad\text{if}\quad t\to+0.
\end{cases}
\]
\end{corollary}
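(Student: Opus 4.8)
The plan is to specialize the energy estimate underlying Theorems \ref{t3.1} and \ref{t3.2} to the degenerate situation $\mathcal{K}\equiv0$ with $\rho,\rho_i$ constant, in which the operator collapses to $\mathbf{D}_tu=\rho\mathbf{D}_t^{\nu}u+\sum_{i=1}^{M}\rho_i\mathbf{D}_t^{\nu_i}u$ (the I and II types coinciding), and then to integrate the resulting scalar linear multi-term fractional differential inequality in closed form. First I would test the equation in \eqref{i.1} with $u$, integrate over $\Omega$, and impose \eqref{i.2} or \eqref{i.3}. Using the convexity (fractional chain-rule) inequality $\int_{\Omega}u\,\mathbf{D}_t^{\theta}u\,dx\ge\tfrac12\mathbf{D}_t^{\theta}\|u\|_{L_2(\Omega)}^{2}$ from the auxiliary Section \ref{s4}, the coercivity of $\mathcal{L}_1$ furnished by h2, and the one-sided bound $uf(u)\ge-L_2+L_3|u|^{\gamma+1}$ from h5, I expect to reach a closed inequality
\[
\rho\mathbf{D}_t^{\nu}Y+\sum_{i=1}^{M}\rho_i\mathbf{D}_t^{\nu_i}Y+C_0^{*}Y\le F,\qquad Y(0)=\|u_0\|_{W^{1,2}(\Omega)}^{2},
\]
where $Y(t)=\|u(\cdot,t)\|_{W^{1,2}(\Omega)}^{2}$, the constant $C_0^{*}>0$ is the coercivity constant of $\mathcal{L}_1$ (so that $\int_\Omega u\,\mathcal{L}_1u\,dx\le-C_0^{*}\|u\|_{W^{1,2}(\Omega)}^{2}$ for both $n=1$ and $n\ge2$ after the boundary terms drop), and $F$ absorbs the bounded contribution $L_2|\Omega|$ of the nonlinearity, the sign-definite term $L_3|u|^{\gamma+1}$ being discarded.

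Next I would dominate $Y$ by the solution $\bar Y$ of the same relation taken with equality and datum $\bar Y(0)=Y(0)$, invoking a comparison principle for multi-term Caputo equations from Section \ref{s4}, so that $Y(t)\le\bar Y(t)$. To solve the linear problem I would apply the Laplace transform: writing $P(s)=\rho s^{\nu}+\sum_i\rho_i s^{\nu_i}+C_0^{*}$ and recording the identity $\mathcal{L}[\mathcal{E}_{\bar\beta,\nu}](s)=\rho/P(s)$ for the function in \eqref{4.1} with $\bar\beta=(\nu,\nu-\nu_1,\dots,\nu-\nu_M)$ and $\bar d=(C_0^{*}/\rho,\rho_1/\rho,\dots,\rho_M/\rho)$ (obtained by term-by-term inversion of the defining series), together with $\mathcal{L}[\mathbf{D}_t^{\theta}\bar Y](s)=s^{\theta}\widehat{\bar Y}(s)-s^{\theta-1}Y(0)$ and $\mathcal{L}[\omega_{1-\theta}](s)=s^{\theta-1}$, I obtain
\[
\bar Y(t)=\frac{F}{\rho}(\mathcal{E}_{\bar\beta,\nu}*1)(t)+Y(0)\Big[(\mathcal{E}_{\bar\beta,\nu}*\omega_{1-\nu})(t)+\sum_{i=1}^{M}\frac{\rho_i}{\rho}(\mathcal{E}_{\bar\beta,\nu}*\omega_{1-\nu_i})(t)\Big].
\]
The bracket is exactly the last two terms of $\mathfrak{g}$; moreover, since $s^{-1}(\rho s^{\nu}+\sum_i\rho_i s^{\nu_i})/P(s)$ admits two equal inverse transforms, it also equals $1-\tfrac{C_0^{*}}{\rho}(\mathcal{E}_{\bar\beta,\nu}*1)(t)$, which accounts for the first term of $\mathfrak{g}$ and for the absolute value. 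Thus the coefficient of $Y(0)=\|u_0\|_{W^{1,2}(\Omega)}^{2}$ equals $\tfrac12\mathfrak{g}(t)$; and since $(\mathcal{E}_{\bar\beta,\nu}*1)(t)$ is bounded with $\lim_{t\to+\infty}(\mathcal{E}_{\bar\beta,\nu}*1)(t)=\rho/C_0^{*}$ by the final-value theorem, the forcing term contributes only the constant recorded in $C_0$. This gives the estimates of the corollary (for $n=1$ the $\C(\bar\Omega)$-bound following from the embedding $W^{1,2}(\Omega)\hookrightarrow\C(\bar\Omega)$), the decay being inherited from the $Y(0)$-coefficient of the comparison function rather than from a crude majorant.

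The last and most delicate step is the asymptotic analysis of $\mathfrak{g}$, which must be extracted directly from that coefficient. As $t\to0^{+}$ I would use the large-$s$ behavior $P(s)\sim\rho s^{\nu}$: then $(\mathcal{E}_{\bar\beta,\nu}*\omega_{1-\nu})(t)\to1$ (equivalently, through the Beta integral $\int_0^t(t-s)^{\nu-1}s^{-\nu}\,ds=\Gamma(\nu)\Gamma(1-\nu)$), whereas $(\mathcal{E}_{\bar\beta,\nu}*\omega_{1-\nu_i})(t)\sim t^{\nu-\nu_i}/\Gamma(\nu-\nu_i+1)\to0$ and $\tfrac{C_0^{*}}{\rho}(\mathcal{E}_{\bar\beta,\nu}*1)(t)\to0$, so that $\mathfrak{g}(0^{+})=1+1=2$. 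As $t\to+\infty$ I would use the small-$s$ expansion $P(s)=C_0^{*}+\rho_1 s^{\nu_1}+\cdots$ and a Tauberian argument, giving $(\mathcal{E}_{\bar\beta,\nu}*\omega_{1-\nu_i})(t)\sim\tfrac{\rho}{C_0^{*}\Gamma(1-\nu_i)}t^{-\nu_i}$, $(\mathcal{E}_{\bar\beta,\nu}*\omega_{1-\nu})(t)\sim\tfrac{\rho}{C_0^{*}\Gamma(1-\nu)}t^{-\nu}$, with the same leading rate for $1-\tfrac{C_0^{*}}{\rho}(\mathcal{E}_{\bar\beta,\nu}*1)$; summing reproduces $\mathfrak{g}(t)\sim\tfrac{\rho}{C_0^{*}}\big[\sum_i\tfrac{\rho_i}{\rho}t^{-\nu_i}+t^{-\nu}\big]$, whose slowest-decaying term is of order $t^{-\nu_1}$. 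I expect this Mittag-Leffler/Tauberian bookkeeping — transferring the leading small-$s$ behaviour of $\rho/P(s)$ through the convolutions and confirming that the absolute value in $\mathfrak{g}$ does not spoil positivity — to be the principal obstacle, the energy inequality and the comparison step being comparatively routine given the machinery of Section \ref{s4}.
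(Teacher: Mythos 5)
There is a genuine gap at the very first step, and it is fatal to the rest of the argument. You test the equation with $u$ and invoke the fractional chain rule $\int_{\Omega}u\,\mathbf{D}_t^{\theta}u\,dx\ge\tfrac12\mathbf{D}_t^{\theta}\|u\|_{L_2(\Omega)}^{2}$, but this only places the \emph{$L_2$ norm} under the fractional derivatives; the gradient appears solely in the elliptic dissipation term. What you actually obtain is
\begin{equation*}
\frac{\rho}{2}\mathbf{D}_t^{\nu}\|u\|_{L_2}^{2}+\sum_{i=1}^{M}\frac{\rho_i}{2}\mathbf{D}_t^{\nu_i}\|u\|_{L_2}^{2}
+\delta_1\|\nabla u\|_{L_2}^{2}+\delta_0\|u\|_{L_2}^{2}\le L_2|\Omega|,
\end{equation*}
which is \emph{not} the closed inequality $\rho\mathbf{D}_t^{\nu}Y+\sum_i\rho_i\mathbf{D}_t^{\nu_i}Y+C_0^{*}Y\le F$ for $Y=\|u\|_{W^{1,2}}^{2}$ that you claim: there is no way to transfer the gradient term from the dissipation into the fractional derivatives. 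Consequently your comparison function controls only $\|u(\cdot,t)\|_{L_2}$ pointwise in time, while $\|\nabla u(\cdot,t)\|_{L_2}$ is controlled only in a time-averaged (fractionally integrated) sense. That is insufficient for both conclusions of the corollary: the $n\ge2$ bound is on $\|u(\cdot,t)\|_{W^{1,2}}$, and the $n=1$ bound on $\|u(\cdot,t)\|_{\C(\bar\Omega)}$ needs the full $W^{1,2}$ norm through the one-dimensional Sobolev embedding ($L_2$ does not embed in $\C$). The paper instead multiplies the equation by $v=u-\Delta u$ (resp.\ $u-u_{xx}$), integrates by parts so that $\mathbf{D}_t^{\theta}$ acts on $u^2+|\nabla u|^2$, i.e.\ on $V=\|u\|_{W^{1,2}}^{2}$, and this is exactly why the extra structural hypotheses you never use — $f'\ge -L_4$ to handle $\int f(u)\Delta u\,dx$, and condition \eqref{3.1} together with Cauchy inequalities to handle $\partial a_1/\partial x$, $\partial a_0/\partial x$ — appear in the assumptions. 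Their presence is the signal that testing with $u$ alone cannot suffice.

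The remainder of your proposal is sound and is in fact the paper's own route: once the inequality \eqref{5.7} for $V=\|u\|_{W^{1,2}}^2$ is in hand, the constant-coefficient, $\mathcal{K}\equiv0$ case is resolved by the Luchko--Gorenflo solution formula (the paper's \cite[Theorem 4.1]{LG}, which is your Laplace-transform computation), the identity
$1-\tfrac{C_0^{*}}{\rho}(\mathcal{E}_{\bar\beta,\nu}*1)=(\mathcal{E}_{\bar\beta,\nu}*\omega_{1-\nu})+\sum_i\tfrac{\rho_i}{\rho}(\mathcal{E}_{\bar\beta,\nu}*\omega_{1-\nu_i})$,
and the asymptotics of Lemma \ref{lA.1}(ii),(v); your limits at $t\to0^{+}$ and $t\to+\infty$ agree with those. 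One further point you pass over silently: the energy method bounds $V=\|u\|_{W^{1,2}}^{2}$ by $C+\|u_0\|_{W^{1,2}}^{2}\,\mathfrak{g}(t)$ (up to constants), and taking square roots yields $\|u\|_{W^{1,2}}\le C[1+\|u_0\|_{W^{1,2}}\sqrt{\mathfrak{g}(t)}]$, with $\sqrt{\mathfrak{g}}$ rather than the $\mathfrak{g}$ written in the corollary — a discrepancy inherited from the paper's statement, but your assertion that the coefficient ``gives the estimates of the corollary'' should have flagged it rather than glossed over it.
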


The results of Theorems \ref{t3.1} and \ref{t3.2} under additional
assumptions on the coefficients are true if h5 takes place with
$\gamma\in(0,1)$. Namely, introducing the value
\[
\delta_{1}^{*}=\frac{\delta^{*}(\delta_1+\delta_0)-\|\partial
a_{1}/\partial x\|^{2}_{L_{\infty}([0,+\infty),\C(\bar{\Omega}))}}
{\delta^{*}},
\]
where $\delta^{*}\in(0,\delta_{1})$ is defined in Theorem
\ref{t3.1}, we have.
\begin{theorem}\label{t3.0*}
Let $n=1$ and assumptions h1-h4 and \eqref{3.1} hold. Moreover, we
require that $f(u)$ satisfies  h5  with $\gamma\in(0,1)$ and there
is a positive
\[
\delta_{2}^{*}\in
\begin{cases}
(0,\delta_{1}-\delta^{*})\qquad \text{if}\quad
\frac{\partial a_{1}}{\partial x}\not\equiv 0,\\
(0,\delta_{1})\qquad\qquad \text{if }\quad\frac{\partial
a_{1}}{\partial x}\equiv 0,
\end{cases}
\]
such that the inequality
\begin{equation}\label{3.4}
\delta_{0}-\frac{L_{4}^{2}}{\delta_{2}^{*}}-\frac {\|\partial
a_{0}/\partial
x\|^{2}_{L_{\infty}([0,+\infty),\C(\bar{\Omega}))}}{\delta_{1}^{*}}>0
\end{equation}
is fulfilled. Then the results of Theorem \ref{t3.1}  hold.
\end{theorem}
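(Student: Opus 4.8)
The plan is to retrace the proof of Theorem \ref{t3.1}, to pinpoint the single place where the hypothesis $\gamma\geq 1$ was exploited, and to replace that argument by the coercivity furnished by \eqref{3.4}. Throughout I abbreviate $\|\cdot\|=\|\cdot\|_{L_2(\Omega)}$ and write $a'=\partial a/\partial x$. The first step is the $L_2$ energy relation. Testing the equation in \eqref{i.1} with $u$ and integrating over $\Omega$, I would bound the leading term from below, via the fractional convexity inequality $v\mathbf{D}_t^{\theta}v\geq\tfrac12\mathbf{D}_t^{\theta}v^2$ applied termwise (and, for the II type operator, via the product identities of Section \ref{s4}, where the monotonicity $\rho',\rho_k'\geq 0$ from h3 is exactly what guarantees the right sign), obtaining $\int_\Omega u\,\mathbf{D}_t u\geq\tfrac12\mathbf{D}_t\|u\|^2$. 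Integrating $\int_\Omega u\mathcal{L}_1 u$ by parts and using \eqref{i.2} or \eqref{i.3} yields $-\int_\Omega a_1|u_x|^2+\int_\Omega a_0u^2\leq-\delta_1\|u_x\|^2-\delta_0\|u\|^2$, while the memory term is controlled through Young's convolution inequality together with $|\mathcal{K}|\leq Ct^{-\nu^{*}}$ and $\mathcal{K}\in L_1(\R^{+})$, so that $\int_\Omega u(\mathcal{K}*\mathcal{L}_2 u)$ is absorbed up to a small fraction of $\|u_x\|^2+\|u\|^2$ and a convolution remainder. The essential change is that the dissipative bound $uf(u)\geq-L_2+L_3|u|^{\gamma+1}$ no longer dominates the $L_2$ norm, since now $\gamma+1<2$; I would therefore retain only the crude consequence $uf(u)\geq-L_2$ and rely on the genuinely positive term $\delta_0\|u\|^2$ for the $L_2$ dissipation.

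For the $W^{1,2}$ bound I would test the equation with $-\mathcal{L}_1 u$ (equivalently, in one dimension, with $-u_{xx}$ combined with $u$). Applying the fractional convexity inequality to the resulting elliptic energy and integrating by parts produces the principal dissipation of order $\|u_{xx}\|^2$ together with the zeroth-order dissipation $\delta_0\|u_x\|^2$. The coefficient derivatives generate cross terms that must be reabsorbed: the term carrying $a_1'$ is split by Young's inequality with weight $\delta^{*}$, which by \eqref{3.1} leaves a strictly positive residual gradient coercivity measured by $\delta_1^{*}$; the term carrying $a_0'$ is then split against this residual, contributing the penalty $\|a_0'\|^2_{L_\infty([0,+\infty),\C(\bar\Omega))}/\delta_1^{*}$. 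The nonlinearity enters through $f'(u)\geq-L_4$, which after integration by parts contributes a term controlled via Young's inequality with a further weight $\delta_2^{*}$, costing the penalty $L_4^2/\delta_2^{*}$. Collecting the zeroth-order contributions, the net coefficient that remains in front of the lower-order dissipation is precisely the left-hand side of \eqref{3.4}; hence the admissibility of $\delta_2^{*}$ in the stated range guarantees strictly positive dissipation at the $W^{1,2}$ level. This is exactly the role played by the strong nonlinearity when $\gamma\geq1$, now taken over by the linear elliptic operator.

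Finally I would add the two relations to obtain a scalar fractional differential inequality $\mathbf{D}_t\mathcal{Y}+c\,\mathcal{Y}\leq C$ for $\mathcal{Y}(t)=\|u(\cdot,t)\|^2_{W^{1,2}(\Omega)}$ with some $c>0$, and conclude the uniform bound \eqref{3.2} by the fractional comparison lemma of Section \ref{s4}, whose kernel is the multinomial Mittag-Leffler function \eqref{4.1}; the remaining estimate \eqref{3.2*} then follows from the one-dimensional embedding $W^{1,2}(\Omega)\hookrightarrow\C(\bar\Omega)$ and Young's convolution inequality applied to $\mathcal{K}*u$. The main obstacle is the $W^{1,2}$ step: with $\gamma\in(0,1)$ one cannot borrow any dissipation from $f$, so every lower-order term has to be absorbed purely by the linear elliptic operator, and the whole argument hinges on choosing $\delta^{*},\delta_1^{*},\delta_2^{*}$ so that \eqref{3.1} and \eqref{3.4} hold simultaneously. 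Keeping the convolution remainder from the memory term and the type-II product-rule corrections compatible with this tightened constant budget is the delicate bookkeeping that must be carried out with care.
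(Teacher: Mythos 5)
Your proposal is correct and coincides with the paper's intended argument: the paper proves Theorem \ref{t3.1} in detail and explicitly leaves Theorem \ref{t3.0*} to the reader as ``similar'', and you correctly isolate the only use of $\gamma\geq 1$ in that proof --- the Young inequality $\int_{\Omega}u^{2}dx\leq \frac{2\varepsilon}{1+\gamma}\int_{\Omega}|u|^{1+\gamma}dx+C_{\varepsilon}|\Omega|$, by which the nonlinearity manufactures the $L_{2}$ dissipation --- and replace it by retaining only $uf(u)\geq -L_{2}$ while letting the $\delta^{*}$-, $\delta_{1}^{*}$- and $\delta_{2}^{*}$-weighted Cauchy splittings of the $\partial a_{1}/\partial x$, $\partial a_{0}/\partial x$ and $L_{4}$ cross terms leave exactly the left-hand side of \eqref{3.4} as the net coefficient of $\int_{\Omega}u^{2}dx$. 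The remaining machinery (the differential inequality \eqref{5.7}, the cutoff reduction to a constant-coefficient fractional ODE, and the multinomial Mittag-Leffler/Gronwall steps) carries over verbatim, as you indicate.
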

\begin{theorem}\label{t3.3}
Let $n\geq 2$, and assumptions h1-h4 hold. We assume that $f(u)$
satisfies h5 with $\gamma\in(0,1)$ and there are positive
quantities  $\delta_{3}^{*}\in(0,1+\delta_0)$,
$\delta_{4}^{*}\in(0,1)$ such that
\begin{equation}\label{3.4*}
\delta_{0}-\frac{L_{4}^{2}}{\delta_{4}^{*}}-\frac
{\|D_{x}a_{0}\|^{2}_{L_{\infty}([0,+\infty),\C(\bar{\Omega}))}}{\delta_{3}^{*}}>0.
\end{equation}
Then the results of Theorem \ref{t3.2} hold.
\end{theorem}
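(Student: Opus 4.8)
The plan is to follow the energy scheme carried out for Theorem~\ref{t3.2} in Section~\ref{s6}, replacing the superlinear dissipation (which for $\gamma\ge1$ supplies the control of $\|u\|_{L_{2}(\Omega)}$ through the term $L_{3}\|u\|_{L_{\gamma+1}(\Omega)}^{\gamma+1}$) by the coercivity encoded in \eqref{3.4*}. Since $\gamma\in(0,1)$, the inequality $uf(u)\ge-L_{2}+L_{3}|u|^{\gamma+1}$ no longer dominates $|u|^{2}$, so all the $L_{2}$-coercivity must now be extracted from the zero-order part of $\mathcal{L}_{1}$ via $-a_{0}\ge\delta_{0}$; the role of \eqref{3.4*} is precisely to guarantee that, after the gradient estimate, the net coefficient in front of $\|u\|_{L_{2}(\Omega)}^{2}$ stays strictly positive. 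Throughout I would use the fractional differential inequalities of Section~\ref{s4} (of Alikhanov type), namely $\int_{\Omega}v\,\mathbf{D}_{t}^{\theta}v\,dx\ge\tfrac12\mathbf{D}_{t}^{\theta}\|v\|_{L_{2}(\Omega)}^{2}$ together with its counterpart for the type~II operator $\mathbf{D}_{t}^{\nu}(\rho v)$, for which the Leibniz rule fails and where assumption h3 (monotonicity and eventual constancy of $\rho,\rho_{k}$) is essential.

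First I would derive the $L_{2}$-estimate by testing the equation in \eqref{i.1} with $u$ and integrating over $\Omega$. The time term produces $\tfrac12\mathbf{D}_{t}\|u\|_{L_{2}(\Omega)}^{2}$ up to controllable remainders, the elliptic term gives $-\int_{\Omega}u\mathcal{L}_{1}u\,dx\ge\|\nabla u\|_{L_{2}(\Omega)}^{2}+\delta_{0}\|u\|_{L_{2}(\Omega)}^{2}$, while the nonlinearity contributes only $\int_{\Omega}uf(u)\,dx\ge-L_{2}|\Omega|$ in the sublinear regime. The memory term $\int_{\Omega}u\,(\mathcal{K}*\mathcal{L}_{2}u)\,dx$ is handled after integrating the first-order operator $\mathcal{L}_{2}$ by parts and invoking $\mathcal{K}\in L_{1}(\R^{+})$ and $|\mathcal{K}|\le Ct^{-\nu^{*}}$ from h4, so that Young's inequality for convolutions keeps it subordinate to the coercive quantities.

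The heart of the matter is the gradient estimate, obtained by testing with $-\Delta u$; for both boundary conditions \eqref{i.2}, \eqref{i.3} the boundary contributions vanish. The elliptic term yields $\|\Delta u\|_{L_{2}(\Omega)}^{2}+\delta_{0}\|\nabla u\|_{L_{2}(\Omega)}^{2}$ plus the cross term $-\int_{\Omega}u\,(\nabla a_{0}\cdot\nabla u)\,dx$, and the nonlinearity gives $\int_{\Omega}f'(u)|\nabla u|^{2}\,dx\ge-L_{4}\|\nabla u\|_{L_{2}(\Omega)}^{2}$. Using the interpolation $\|\nabla u\|_{L_{2}(\Omega)}^{2}\le\|u\|_{L_{2}(\Omega)}\|\Delta u\|_{L_{2}(\Omega)}$ and Young's inequality with parameters $\delta_{4}^{*}$ and $\delta_{3}^{*}$, I would divert the $\|\Delta u\|_{L_{2}(\Omega)}^{2}$- and $\|\nabla u\|_{L_{2}(\Omega)}^{2}$-portions into the coercive terms (whose coefficients, one and $\delta_{0}$, are exactly what render the requirements $\delta_{4}^{*}\in(0,1)$ and $\delta_{3}^{*}\in(0,1+\delta_{0})$ admissible), at the price of the residual $\bigl(\tfrac{L_{4}^{2}}{\delta_{4}^{*}}+\tfrac{\|D_{x}a_{0}\|^{2}}{\delta_{3}^{*}}\bigr)\|u\|_{L_{2}(\Omega)}^{2}$. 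Adding this to the $L_{2}$-estimate, the combined coefficient of $\|u\|_{L_{2}(\Omega)}^{2}$ becomes $\delta_{0}-\tfrac{L_{4}^{2}}{\delta_{4}^{*}}-\tfrac{\|D_{x}a_{0}\|^{2}}{\delta_{3}^{*}}$, which is positive by \eqref{3.4*}, so the full $W^{1,2}(\Omega)$-energy is controlled coercively.

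Finally I would collect the two estimates into a single fractional differential inequality of the form $\mathbf{D}_{t}E(t)+c\,E(t)\le C$ for the energy $E(t)=\|u(\cdot,t)\|_{W^{1,2}(\Omega)}^{2}$, and conclude by the fractional Gr\"onwall/comparison lemma of Section~\ref{s4}, whose kernel is built from the multinomial Mittag-Leffler functions \eqref{4.1} and decays at $+\infty$. This yields the uniform-in-time bound \eqref{3.2}, and reinserting it into the convolution, together with $\mathcal{K}\in L_{1}(\R^{+})$, gives the estimate for $\|\mathcal{K}*u\|_{L_{\infty}([0,+\infty),W^{1,2}(\bar\Omega))}$, i.e.\ the conclusions of Theorem~\ref{t3.2}. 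I expect the main obstacle to be twofold: making the coercivity bookkeeping robust in the sublinear regime so that the positivity \eqref{3.4*} genuinely closes the gradient estimate (rather than leaving an uncontrolled $\|u\|_{L_{2}(\Omega)}^{2}$ once the superlinear term is unavailable), and carrying the memory convolution and the type~II operator $\mathbf{D}_{t}^{\nu}(\rho u)$ through the fractional Gr\"onwall argument while preserving uniformity as $t\to+\infty$.
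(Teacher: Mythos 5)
Your proposal is correct and follows essentially the same route as the paper's intended argument (the paper explicitly leaves Theorem~\ref{t3.3} to the reader as a variant of the Section~\ref{s6} proof of Theorem~\ref{t3.2}): drop the now-useless term $L_{3}\int_{\Omega}|u|^{1+\gamma}dx$, extract all $L_{2}$-coercivity from $\delta_{0}$, absorb the cross terms $L_{4}\int_{\Omega}u\Delta u\,dx$ and $\int_{\Omega}u\,D_{x}a_{0}\cdot\nabla u\,dx$ via Cauchy inequalities with parameters $\delta_{4}^{*}\in(0,1)$ and $\delta_{3}^{*}\in(0,1+\delta_{0})$ matched against the coefficients of $\|\Delta u\|_{L_{2}(\Omega)}^{2}$ and $\|\nabla u\|_{L_{2}(\Omega)}^{2}$, close the $\|u\|_{L_{2}(\Omega)}^{2}$ budget by \eqref{3.4*}, and then run Steps 2--3 of Section~\ref{s5} (cutoff, time shift, constant-coefficient FODE, Mittag-Leffler representation, Gronwall) unchanged. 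One small correction: in the Dirichlet case the boundary contribution of $\int_{\Omega}f(u)\Delta u\,dx$ does not vanish but equals $f(0)\int_{\Omega}\Delta u\,dx$ (which is why the paper integrates by parts only the piece $f(u)-f(0)+L_{4}u$, vanishing on $\partial\Omega_{T}$); since $|f(0)|\leq L_{1}$, this term is Cauchy-controlled and only enlarges the additive constant, so your scheme is unaffected.
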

\begin{example}
It is apparent that the following $a_{0},$ $a_{1}$, $f(u)$ meet the
requirements of Theorems \ref{t3.0*} and \ref{t3.3}:
\[
a_{1}(x,t)=1,\quad a_{0}(x,t)=-3-e^{-t},\quad
f(u)=u(1+u^{2})^{a-b},\quad\text{where}\quad \max\{0,b-1/2\}<a<b<1.
\]
Indeed, in this case, the straightforward calculations arrive at the
relations:
\[
\delta_{0}=3,\quad\delta_{1}=1,\quad L_{1}=L_{2}=L_{3}=1,\quad
L_{4}=0,\quad \gamma=1-2b+2a.
\]
\end{example}

In  these  theorems, we do not specify the type of a solution to
\eqref{i.1}-\eqref{i.3}.  In fact, the definitions of a weak/strong
and  classical solution given here below tell us that all these
solutions satisfy the bounds stated in Theorems
\ref{t3.1}-\ref{t3.3} and Corollary \ref{c1}.

\begin{definition}\label{d3.1} (\textit{Strong solution}) A
function $u$ is called a strong solution to the initial value
problem \eqref{i.1} subjected to either Dirichlet boundary
condition \eqref{i.2} or the Neumann  boundary condition
\eqref{i.3} if

\noindent$\bullet$ $u\in\C(\bar{\Omega}_{T})\cap
L_{2}((0,T),W^{2,2}(\Omega)),$ $\mathbf{D}^{\nu}_{t}u,$
$\mathbf{D}^{\nu_{i}}_{t}u\in L_{2}(\Omega_{T})$, $i=1,...,M;$

\noindent$\bullet$ the initial condition in \eqref{i.1} and the
boundary condition \eqref{i.2} or \eqref{i.3} hold;

\noindent$\bullet$ for $T>0$ and any $\Phi\in L_{2}(\Omega_{T})$
the identity
\[
\int_{\Omega}\int_{0}^{T}(\mathbf{D}_{t}u-\mathcal{L}_{1}u-\mathcal{K}*\mathcal{L}_{2}u+f(u))\Phi
dxdt=0
\]
is performed.
\end{definition}

It is apparent that a function
$u\in\C^{2+\alpha,\frac{2+\alpha}{2}\nu}(\bar{\Omega}_{T})$ having
additional regularity $\mathbf{D}_{t}^{\nu_{i}}u\in
\C^{\alpha,\frac{\nu\alpha}{2}}(\bar{\Omega}_{T}),$ $i=1,...,M,$ is
called a classical solution if relations \eqref{i.1}-\eqref{i.3} are
fulfilled  in the classical sense.

As for the definition of a weak solution to \eqref{i.1}-\eqref{i.3},
here we restrict ourselves to the case of \textbf{DBC}. In  the
\textbf{NBC} case, the definition is given in the similar way.
\begin{definition}\label{d3.2} (\textit{Weak solution}) We say
that $u$  is a weak solution of \eqref{i.1} and \eqref{i.2} if

\noindent$\bullet$ $u\in L_{2}(0,T,
\overset{0}{W}\,^{1,2}(\Omega)),$ $\omega_{1-\nu}*(\rho
u-\rho(0)u_0),$ $\omega_{1-\nu_{i}}*(\rho_{i} u-\rho_{i}(0)u_0)\in
\overset{0}{W}\,^{1,2}((0,T),H^{-1}(\Omega));$

\noindent$\bullet$ for any test function $\Phi\in
W^{1,2}((0,T),L_{2}(\Omega))\cap L_{2}((0,T),\overset{0}{W}\,
^{1,2}(\Omega)),$ $\Phi|_{t=T}=0,$ there holds
\begin{align*}
&-\int_{0}^{T}\int_{\Omega}\Phi_{t}\bigg\{ \omega_{1-\nu}*(\rho
u-\rho(0)u_0)+\sum_{i=1}^{M} \omega_{1-\nu_{i}}*(\rho_{i}
u-\rho_{i}(0)u_0)\bigg\}dxdt\\
&+ \int_{0}^{T}\int_{\Omega}\bigg[\sum_{i=1}^{n}\frac{\partial
u}{\partial x_{i}}\frac{\partial \Phi}{\partial
x_{i}}-a_{0}u\Phi\bigg]dxdt
-\int_{0}^{T}\int_{\Omega}(\mathcal{K}*\mathcal{L}_{2}u)\Phi dxdt
\\
& = -\int_{0}^{T}\int_{\Omega}f(u)\Phi dx dt.
\end{align*}
\end{definition}

We finally observe that the smoothness of the given functions
required in Theorems \ref{t3.1}-\ref{t3.3} as well as the
restriction on  $\nu_{i}$, $i=1,...,M,$ do not provide, generally
speaking, the classical/strong solvability to problems like
\eqref{i.1}-\eqref{i.3}. We refer to \cite[Theorems 4.1 and
4.4]{SV}, \cite[Theorem 2]{V} and \cite[Theorems 3.1 and 3.2]{V1},
where under stronger conditions on the given data, the existence and
uniqueness of classical and strong solutions to more general
problems than \eqref{i.1}-\eqref{i.3}  are established for each
fixed positive $T$. For example, if
\[
\nu_{i}\in\bigg(0,\frac{\nu(2-\alpha)}{2}\bigg)\quad\text{and}\quad
u_{0}\in
\begin{cases}
\overset{0}{W}\,^{1,2}(\Omega)\cap W^{2,2}(\Omega)\quad\text{in the \textbf{DBC} case,}\\
W^{2,2}(\Omega)\qquad\qquad\qquad\text{in the \textbf{NBC} case,}
\end{cases}
\]
then \cite[Theorem 4.4]{SV} asserts the  one-valued strong
solvability to \eqref{i.1}-\eqref{i.3} for each fixed $T$ in the
one-dimensional case,  i.e. $u\in L_{2}((0,T),W^{2,2}(\Omega))\cap
W^{\nu,2}((0,T),L_{2}(\Omega))$.

\noindent Nevertheless, assumptions in this art are sufficient to
obtain the asymptotic behavior of $u$  for large time (including
$t\to +\infty$).

If the coefficients of the operators in  \eqref{i.1} are
time-independent, then Theorems \ref{t3.1}-\ref{t3.3} ensure the
existence of the absorbing set $\mathcal{B}_{0}\subset\mathbb{H}$,
where $\mathbb{H}$ is a Hilbert space depending on the smoothness of
the initial data $u_{0}$.

Indeed, appealing to the definition in \cite[Section 3]{GPM}, we
denote
 an open ball centered at
$0$ and a radius $R>0$ in a Hilbert space $\mathbb{H}$ by
$\mathcal{B}_{0}=\mathcal{B}_{R}(0)$.
\begin{definition}\label{d3.3}
 A
bounded set $\mathcal{B}_{0}\subset \mathbb{H}$ is called an
absorbing set to \eqref{i.1}-\eqref{i.3}  if for any initial value
$u_{0}\in \mathcal{B}_{R}(0)\subset\mathbb{H}$ there exists
$t_{\mathbb{H}}=t_{\mathbb{H}}(R)$ such that the solution
$u(\cdot,t)\in \mathcal{B}_{0}$ for any $t\geq t_{\mathbb{H}}$.
\end{definition}
Bearing in mind this definition, we reformulate Theorems
\ref{t3.1}-\ref{t3.3}.
\begin{theorem} Let
the  coefficients in the operators of the equation in \eqref{i.1} be
time-independent. Under assumptions of Theorems
\ref{t3.1}-\ref{t3.3}, there exists the absorbing set in
\[
\mathbb{H}=\begin{cases}
\overset{0}{W}\,^{1,2}(\Omega)\quad\text{in the \textbf{DBC} case,}\\
W^{1,2}(\Omega)\quad\text{in the \textbf{NBC} case,}
\end{cases}
\]
for problem \eqref{i.1}-\eqref{i.3}.
\end{theorem}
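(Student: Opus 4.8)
The plan is to deduce the existence of an absorbing set directly from the a priori bounds \eqref{3.2} established in Theorems \ref{t3.1}--\ref{t3.3}, after sharpening them in the time-independent setting so that the contribution of the initial datum decays as $t\to+\infty$. More precisely, I would first revisit the proofs of Theorems \ref{t3.1}--\ref{t3.3} under the hypothesis that all coefficients (and hence $\rho,\rho_i$) are time-independent, and show that the energy estimate obtained there can be recast in the refined form $\|u(\cdot,t)\|_{\mathbb{H}}\le C_0+\mathfrak{g}(t)\,\|u_0\|_{\mathbb{H}}$, where $C_0$ depends only on the structural parameters of the model (through the nonlinearity $f$, the constants of h5, and $|\Omega|$, but not on $u_0$), and $\mathfrak{g}(t)\to 0$ as $t\to+\infty$. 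This is exactly the shape already exhibited in Corollary \ref{c1} for $\mathcal{K}\equiv 0$; the task here is to retain the decaying factor multiplying $\|u_0\|_{\mathbb{H}}$ when the memory term is present.

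The decay of $\mathfrak{g}$ is the analytic core. Because the coefficients no longer depend on $t$, the homogeneous part of the problem (the part carrying $u_0$) is representable through the multinomial Mittag--Leffler functions $\mathcal{E}_{\bar\beta,\nu}$ of \eqref{4.1}, exactly as in the scalar reduction underlying Corollary \ref{c1}; these functions decay algebraically, with leading rate $t^{-\nu_{M}}$. I would then control the memory contribution $\mathcal{K}*\mathcal{L}_{2}u$ using assumption h4, namely $\mathcal{K}\in L_{1}(\R^{+})$ together with $|\mathcal{K}|\le C t^{-\nu^{*}}$, and the bound \eqref{3.2*} on $\mathcal{K}*u$, so that the convolution term is absorbed into $C_0$ and does not reintroduce a nondecaying multiple of $\|u_0\|_{\mathbb{H}}$. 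The outcome is a single function $\mathfrak{g}(t)$, built from convolutions of $\mathcal{E}_{\bar\beta,\nu}$ with the kernels $\omega_{1-\nu_i}$ and with $\mathcal{K}$, which vanishes at infinity.

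With the refined estimate in hand, the absorbing set is produced by a routine argument. Fix the ball $\mathcal{B}_0=\mathcal{B}_{2C_0}(0)\subset\mathbb{H}$, whose radius depends only on the structural data and not on the initial value. Given any $u_0\in\mathcal{B}_R(0)$, the bound yields $\|u(\cdot,t)\|_{\mathbb{H}}\le C_0+R\,\mathfrak{g}(t)$; since $\mathfrak{g}(t)\to 0$, there exists $t_{\mathbb{H}}=t_{\mathbb{H}}(R)$ such that $R\,\mathfrak{g}(t)<C_0$ for all $t\ge t_{\mathbb{H}}$, whence $\|u(\cdot,t)\|_{\mathbb{H}}<2C_0$, i.e.\ $u(\cdot,t)\in\mathcal{B}_0$ for such $t$. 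This is precisely the requirement of Definition \ref{d3.3}, and the identification of $\mathbb{H}$ as $\overset{0}{W}\,^{1,2}(\Omega)$ in the Dirichlet case and $W^{1,2}(\Omega)$ in the Neumann case follows from h4.

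The hard part will be making the refinement rigorous in full generality, i.e.\ verifying that in the time-independent case the $u_0$-dependent part of the solution genuinely decays and is not merely bounded as in \eqref{3.2}. This requires the precise asymptotics of the multinomial Mittag--Leffler functions and a careful check that the convolution with $\mathcal{K}$, as well as both admissible forms of $\mathbf{D}_{t}$ in \eqref{i.4} (which for constant $\rho,\rho_i$ reduce to genuine constant-coefficient multi-term operators), do not feed a persistent contribution of $\|u_0\|_{\mathbb{H}}$ back into the estimate. Once the decaying representation is secured, the remaining passage to Definition \ref{d3.3} is immediate.
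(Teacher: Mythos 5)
Your skeleton is exactly the paper's intended justification: the theorem is stated there as a direct reformulation of Theorems \ref{t3.1}--\ref{t3.3} in light of Definition \ref{d3.3}, and the decay mechanism that makes the reformulation legitimate is the one you identify, namely the refined estimate of Corollary \ref{c1}, $\|u(\cdot,t)\|_{\mathbb{H}}\leq C_{0}[1+\|u_{0}\|_{\mathbb{H}}\,\mathfrak{g}(t)]$ with $\mathfrak{g}(t)\to 0$, obtained from the multinomial Mittag--Leffler representation and the asymptotics of Lemma \ref{lA.1}. Your concluding ball argument (fix $\mathcal{B}_{0}=\mathcal{B}_{2C_{0}}(0)$, then choose $t_{\mathbb{H}}(R)$ so that $R\,\mathfrak{g}(t)<C_{0}$ for $t\geq t_{\mathbb{H}}$) is precisely what Definition \ref{d3.3} requires, and in the case $\mathcal{K}\equiv 0$ your proof is complete and coincides with the paper's.

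The gap is in your treatment of the memory term. You propose to control $\mathcal{K}*\mathcal{L}_{2}u$ via h4 together with \eqref{3.2*}, ``so that the convolution term is absorbed into $C_{0}$.'' But \eqref{3.2*} bounds $\|\mathcal{K}*u\|_{L_{\infty}([0,+\infty),W^{1,2}(\Omega))}$ by $C_{1}[1+\|u_{0}\|_{W^{1,2}(\Omega)}]$: absorbing this into the additive constant makes $C_{0}$ depend on $\|u_{0}\|$, hence on $R$, and the ball $\mathcal{B}_{2C_{0}}(0)$ is then no longer a fixed absorbing set --- this is exactly the ``persistent contribution of $\|u_{0}\|_{\mathbb{H}}$'' you say must be avoided, so the step as written is circular. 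What the paper's machinery actually yields in the constant-coefficient case (Step 3 of Section \ref{s5}) is a Volterra inequality of the form
\begin{equation*}
V(t)\leq a_{0}+a_{1}(t)V(0)+(G_{1}*V)(t),
\end{equation*}
where $a_{1}(t)\to 0$ by Lemma \ref{lA.1}(v) and $G_{1}=\frac{1}{\rho^{*}}\mathcal{K}_{0}*\mathcal{E}_{\bar{\beta},\nu}$ is nonnegative and belongs to $L_{1}(\R^{+})$ by Lemma \ref{lA.1}(iv) and \eqref{5.15}. The Gronwall step used there (\cite{Dr}) extracts from this only boundedness, $V\leq C[1+V(0)]$, not decay of the $V(0)$-part: iterating the inequality produces the series $\sum_{k}G_{1}^{*k}*a_{1}$ multiplying $V(0)$, and without a smallness condition such as $\|G_{1}\|_{L_{1}(\R^{+})}<1$ (equivalently, integrability of the resolvent kernel of $G_{1}$) this series need not tend to zero, nor even converge in $L_{1}$. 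So for $\mathcal{K}\not\equiv 0$ your proof needs an additional argument (smallness of the memory kernel, or a resolvent-kernel analysis) that your proposal does not supply --- and, in fairness, neither does the paper, whose fully justified case is $\mathcal{K}\equiv 0$ via Corollary \ref{c1}.
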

\begin{remark}\label{r3.1}
If $\rho_{i}\equiv 0,$ $i=1,...,M,$ then the coefficient $\rho$ may
be dependent on not only time but also the space variables too.
Indeed, Theorems \ref{t3.1}-\ref{t3.3} hold if in \eqref{3.0} we
will require
\begin{align*}
&\rho=\rho(x,t)\in L_{\infty}([0,+\infty),\C(\bar{\Omega}))\quad
\text{and} \quad \frac{\partial \rho}{\partial t}(x,t)\in
L_{\infty}([0,+\infty),\C(\bar{\Omega})),\\
& \frac{\partial \rho}{\partial t}(x,t)\geq 0\quad\text{for
all}\quad (x,t)\in \bar{\Omega}_{\infty},\quad
\rho(x,t)=\rho(x,T_{0})\quad \text{for all}\quad t\geq T_{0}\quad
\text{and}\quad x\in\bar{\Omega}.
\end{align*}
\end{remark}
\begin{remark}\label{r3.3}
We notice that our requirements on the kernel $\mathcal{K}$ admit
the case $\mathcal{K}\equiv 0,$ telling us that the multi-term
semilinear equation
\[
\mathbf{D}_{t}u-\mathcal{L}_{1}u+f(u)=0
\]
fits in our analysis and is discussed by the theorems above.
\end{remark}

It is worth noting that the proof of Theorems \ref{t3.0*} and
\ref{t3.3} is similar to the arguments leading to Theorems
\ref{t3.1} and \ref{t3.2}, respectively. Thus, in this art (see
Sections \ref{s5}-\ref{s6}), we prove Theorems \ref{t3.1} and
\ref{t3.2}, while the verification of Theorems \ref{t3.0*} and
\ref{t3.3} is left to the interesting readers. Besides, in our
analysis we focus on $\mathbf{D}_{t}$ being the II type
\textbf{FDO}, sine the case the I type \textbf{FDO} is simpler and
tackled with the similar arguments.



\section{Technical Results}
\label{s4}

\noindent In this section we collect some useful properties of
fractional derivatives and integrals, as well as several
preliminaries results that will be significant in our investigation.
First, we subsume \cite[Lemma 2.1]{KST}, \cite[Proposition
4.1]{KPV2}, \cite[Proposition 4.2 ]{KPV3}, \cite[Proposition 1]{KV}
as the following claim.
\begin{proposition}\label{pA.0}
The following hold.

\noindent (i) For any given positive numbers $\theta_1$ and
$\theta_2$ and a summable kernel $k=k(t)$, there are relations
\[
(\omega_{\theta_1}*\omega_{\theta_2})(t)=\omega_{\theta_1+\theta_{2}}(t),\,
(1*\omega_{\theta_{1}})(t)=\omega_{1+\theta_{1}}(t),\,
\omega_{\theta_{1}}(t)\geq CT^{\theta_{1}-1},\,
(\omega_{\theta_{1}}*k)(t)\leq C\omega_{\theta_{1}}(t).
\]
Here the positive constant $C$ depends only on $T,$ $\theta_{1}$
and $\|k\|_{L_{1}(0,T)}$.

\noindent (ii) Let $k\in L_{1}(0,T)$, $\theta\in(0,1)$, and let
$v=v(t)$ be a bounded function on $[0,T]$. Then there are the
inequalities:
\begin{align*}
I_{t}^{\theta}(k*v)(t)&=(w*v)(t)\quad\text{with}\quad
w=I_{t}^{\theta}k(t),\\
\|I_{t}^{\theta}k\|_{L_{1}(0,T)}&\leq\frac{T^{\theta}}{\Gamma(1+\theta)}\|k\|_{L_{1}(0,T)}.
\end{align*}

\noindent (iii) Let $\theta\in(0,1),$ $\mathfrak{C}_{i},$
$i=0,1,2,$ be  nonnegative numbers,
$\mathfrak{C}_{1}+\mathfrak{C}_{2}>0,$ $\mathfrak{C}_{0}>0$.
Assume that a function $k_{1}\in L_{1}(0,T)$  and the relations
hold:
\[
I_{t}^{\theta}|k_{1}|(t)\in\C([0,T])\qquad\text{and}\qquad
I_{t}^{\theta}k_{1}(0)=0.
\]
If a function $v\in\C([0,T])$ satisfies the estimate
\[
|v|\leq \mathfrak{C}_{0}+\mathfrak{C}_{1}I_{t}^{\theta}|k_1
v|+\mathfrak{C}_{2}(|k_2|*|v|)(t)
\]
for all $t\in[0,T]$ and any $k_{2}\in L_{1}(0,T)$, then the bound
\[
|v|\leq \mathfrak{C}_{0}\mathfrak{C}
\]
 holds with the positive quantities depending only on
 $T,\mathfrak{C}_{i},$ $i=1,2,$ and the corresponding norms of
 $k_1$ and $k_{2}$.
 \end{proposition}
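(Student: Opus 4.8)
The plan is to treat the three parts separately, since (i) and (ii) follow directly from the semigroup structure of the Riemann--Liouville integral together with Young's inequality, whereas (iii) is a weakly singular Volterra--Gronwall estimate requiring an iteration argument, and this last part is where the real work lies.

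For part (i), I would first establish the semigroup identity $(\omega_{\theta_1}*\omega_{\theta_2})(t)=\omega_{\theta_1+\theta_2}(t)$ by a direct computation: writing out the convolution and substituting $s=t\tau$ reduces the integral to the Beta function $B(\theta_1,\theta_2)=\int_0^1(1-\tau)^{\theta_1-1}\tau^{\theta_2-1}\,d\tau$, and the identity $B(\theta_1,\theta_2)=\Gamma(\theta_1)\Gamma(\theta_2)/\Gamma(\theta_1+\theta_2)$ yields exactly $t^{\theta_1+\theta_2-1}/\Gamma(\theta_1+\theta_2)$. The relation $(1*\omega_{\theta_1})(t)=\omega_{1+\theta_1}(t)$ is then the special case $\theta_2=1$, on noticing that $\omega_1\equiv 1$. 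The lower bound $\omega_{\theta_1}(t)\ge CT^{\theta_1-1}$ follows from the monotonicity of $t\mapsto t^{\theta_1-1}$ on $(0,T]$, with $C=1/\Gamma(\theta_1)$ in the relevant range $\theta_1\in(0,1)$. Finally, for $(\omega_{\theta_1}*k)(t)\le C\omega_{\theta_1}(t)$ I would split the convolution integral at $s=t/2$: on $(0,t/2)$ one estimates $(t-s)^{\theta_1-1}\le 2^{1-\theta_1}t^{\theta_1-1}$ and pulls out $\|k\|_{L_1}$, while the near-diagonal contribution on $(t/2,t)$ is controlled using the integrability of $k$ as in the cited sources.

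For part (ii), associativity and commutativity of the convolution give $I_t^\theta(k*v)=\omega_\theta*(k*v)=(\omega_\theta*k)*v=w*v$ with $w=I_t^\theta k$, the interchange of integrations being legitimate by Fubini since $k\in L_1(0,T)$ and $v$ is bounded. The $L_1$ bound is then Young's convolution inequality, $\|I_t^\theta k\|_{L_1(0,T)}=\|\omega_\theta*k\|_{L_1(0,T)}\le\|\omega_\theta\|_{L_1(0,T)}\|k\|_{L_1(0,T)}$, combined with the elementary computation $\|\omega_\theta\|_{L_1(0,T)}=T^\theta/\Gamma(1+\theta)$.

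The main obstacle is part (iii). The idea is to read the hypothesis as $|v|\le \mathfrak{C}_0+\mathcal{A}|v|$, where $\mathcal{A}w=\mathfrak{C}_1 I_t^\theta(|k_1|w)+\mathfrak{C}_2(|k_2|*w)$ is a positivity-preserving Volterra operator; since $v\in\C([0,T])$ is bounded, monotonicity of $\mathcal{A}$ lets one iterate, giving $|v|\le \mathfrak{C}_0\sum_{j=0}^{n-1}\mathcal{A}^j 1+\mathcal{A}^n|v|$. The crux is that each application of $\mathcal{A}$ improves integrability via part (ii), so that iterating $n$ times produces a kernel whose $L_1(0,T)$-norm is controlled by $C^n\,T^{n\theta}/\Gamma(1+n\theta)$ (from the repeated $\omega_\theta$-convolutions) times powers of $\|k_1\|_{L_1}$ and $\|k_2\|_{L_1}$, a quantity summable in $n$. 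Hence the Neumann series $\sum_j\mathcal{A}^j 1$ converges and $\mathcal{A}^n|v|\to 0$, yielding $|v|\le \mathfrak{C}_0\mathfrak{C}$ with $\mathfrak{C}$ depending only on $T$, $\mathfrak{C}_1,\mathfrak{C}_2$ and the $L_1$-norms of $k_1,k_2$. The assumptions $I_t^\theta|k_1|\in\C([0,T])$ and $I_t^\theta k_1(0)=0$ are precisely what upgrade these $L_1$ estimates to pointwise ones and guarantee vanishing at the origin, so that the weak singularity of $\omega_\theta$ does not obstruct the iteration. The delicate point throughout is the interplay of this weak singularity with the mixed term $\mathfrak{C}_2(|k_2|*|v|)$, which is why the factorial decay of $1/\Gamma(1+n\theta)$ is essential and a mere exponential Gronwall bound would not suffice.
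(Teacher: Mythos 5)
A preliminary remark on the comparison: the paper offers no proof of this proposition at all --- it is explicitly subsumed from \cite{KST}, \cite{KPV2}, \cite{KPV3}, \cite{KV} --- so your reconstruction has to stand on its own merits. Parts (ii) and the first three relations of (i) do: the Beta-function computation, the case $\theta_2=1$, the monotonicity of $t\mapsto t^{\theta_1-1}$, and the Fubini/Young arguments are the standard ones and are correct. But the last relation of (i) has a real gap: your split at $s=t/2$ settles the far-field part, while for the near-diagonal part you defer to ``the cited sources'', which is circular in a blind proof and, more importantly, cannot be filled from summability alone. Indeed the inequality $(\omega_{\theta_1}*k)(t)\le C\omega_{\theta_1}(t)$ is \emph{false} for general $k\in L_1$: with $\theta_1\le 1/2$ and $k(s)=(t_0-s)^{-1/2}\chi_{(0,t_0)}(s)\in L_1(0,T)$ one gets $(\omega_{\theta_1}*k)(t_0)=+\infty$. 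What saves it is a pointwise bound of the type imposed in h4, $|k(s)|\le Cs^{-\nu^{*}}$ with $\nu^{*}\in[0,1)$; then on $(t/2,t)$ one has $|k|\le C(t/2)^{-\nu^{*}}$ and $\int_{t/2}^{t}(t-s)^{\theta_1-1}|k(s)|\,ds\le Ct^{\theta_1-\nu^{*}}\le CT^{1-\nu^{*}}t^{\theta_1-1}$.

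The serious gap is in (iii). Your convergence claim for the Neumann series --- that the kernel of $\mathcal{A}^{n}$ has $L_1$-norm of order $C^{n}T^{n\theta}/\Gamma(1+n\theta)$ times powers of $\|k_1\|_{L_1}$ and $\|k_2\|_{L_1}$ --- is unjustified and in fact false. Write $\mathcal{A}=\mathcal{A}_1+\mathcal{A}_2$ with $\mathcal{A}_1w=\mathfrak{C}_1I_{t}^{\theta}(|k_1|w)$, $\mathcal{A}_2w=\mathfrak{C}_2|k_2|*w$. The pure term $\mathcal{A}_2^{n}w=\mathfrak{C}_2^{n}|k_2|^{*n}*w$ contains no $\omega_\theta$ factor whatsoever, so no $1/\Gamma(1+n\theta)$ can appear there; one only gets $(\mathfrak{C}_2\|k_2\|_{L_1})^{n}$, which is not summable unless $\mathfrak{C}_2\|k_2\|_{L_1}<1$. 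In the $\mathcal{A}_1$ branch, the multiplications by $|k_1(s)|$ are interleaved between the $\omega_\theta$-convolutions and cannot be commuted past them to collect $\omega_\theta^{*n}=\omega_{n\theta}$; that device (Henry's singular Gronwall lemma) requires $k_1\in L_\infty$ and produces $\|k_1\|_\infty^{n}$, not $\|k_1\|_{L_1}^{n}$. A concrete counterexample to your claimed bound: $k_1=\omega_{1-\theta}\in L_1(0,T)$ gives $\mathcal{A}_11=\mathfrak{C}_1\,\omega_\theta*\omega_{1-\theta}=\mathfrak{C}_1$, hence $\mathcal{A}_1^{n}1\equiv\mathfrak{C}_1^{n}$: no factorial decay at all, although every quantity entering your estimate is finite. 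This kernel is excluded only by the hypothesis $I_{t}^{\theta}k_1(0)=0$ (here $I_{t}^{\theta}k_1\equiv1$), which shows that any correct proof must use the continuity of $I_{t}^{\theta}|k_1|$ and its vanishing at $t=0$ \emph{quantitatively}, whereas in your write-up these hypotheses enter only as a qualitative afterthought. The standard repair is localization rather than a global Neumann series: setting $\kappa=I_{t}^{\theta}|k_1|$, a Dini-type argument (monotone-in-$\delta$ continuous functions decreasing pointwise to $0$ on the compact $[0,T]$) gives $\sup_{\sigma}\int_{\sigma}^{\sigma+\delta}\omega_\theta(\sigma+\delta-s)|k_1(s)|\,ds\to0$ as $\delta\to0$; choose $\delta$ so that $\mathfrak{C}_1$ times this quantity plus $\mathfrak{C}_2\int_0^{\delta}|k_2|$ is at most $1/2$, absorb $\sup|v|$ on each interval $[j\delta,(j+1)\delta]$ (bounding the history integral over $[0,j\delta]$ by $\|\kappa\|_{\C([0,T])}\sup_{[0,j\delta]}|v|$, using that $\omega_\theta$ is decreasing), and propagate over the $\lceil T/\delta\rceil$ steps; the constant compounds geometrically in the number of steps, which is exactly why $\mathfrak{C}$ ends up depending only on $T$, $\mathfrak{C}_1,\mathfrak{C}_2$ and the stated norms of $k_1,k_2$.
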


The next result  includes  \cite[Theorem 2.4]{SKM},
\cite[Proposition 5.1]{SV}, \cite[Theorem 2.3]{VZ}.
\begin{proposition}\label{pA.1}
The following holds.

\noindent(i) Let $v\in\C([0,T])$ then
\[
I_{t}^{\theta}\partial_{t}^{\theta}v=v.
\]

\noindent (ii) Let $\theta,\theta_1\in(0,1)$ and
$\theta_1>\theta/2,$ $v\in\C^{\theta_1}([0,T])$. For any even
integer $p\geq 2$ the inequalities are true
\[
\partial_{t}^{\theta}v^{p}(t)\leq
\partial_{t}^{\theta}v^{p}(t)+(p-1)v^{p}(t)\omega_{1-\theta}(t)\leq p v^{p-1}(t)\partial_{t}^{\theta}v(t).
\]
If $v$  is nonnegative, then these bounds hold for any integer odd
$p$.

\noindent Moreover, if $v\in L_{2}(0,T)$ and
$\omega_{1-\theta}*|v-v(0)|^{2}\in \overset{0}{W}\,^{1,2}(0,T)$,
then these inequalities hold for  a.e. $t\in[0,T]$.
\end{proposition}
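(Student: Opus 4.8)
The plan is to handle the two parts separately; both lean on the integral representations of $I_t^\theta$ and $\partial_t^\theta$ and on the semigroup identity $\omega_\theta*\omega_{1-\theta}=\omega_1\equiv 1$ furnished by Proposition \ref{pA.0}(i).

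For (i), I would put $g:=\omega_{1-\theta}*v=I_t^{1-\theta}v$, so that $\partial_t^\theta v=g'$. Since $v\in\C([0,T])$ is bounded, $|g(t)|\leq\|v\|_{\C([0,T])}(\omega_{1-\theta}*1)(t)=\|v\|_{\C([0,T])}\,\omega_{2-\theta}(t)\to 0$ as $t\to 0^+$ (because $2-\theta>1$), hence $g(0)=0$. Differentiating the convolution yields the commutation rule $(I_t^\theta g)'=\omega_\theta(t)g(0)+I_t^\theta g'$, while the semigroup property gives $I_t^\theta g=I_t^\theta I_t^{1-\theta}v=I_t^1 v=\int_0^t v\,ds$. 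Combining these,
\[
I_t^\theta\partial_t^\theta v=I_t^\theta g'=(I_t^\theta g)'-\omega_\theta(t)g(0)=\Big(\int_0^t v\,ds\Big)'=v,
\]
which is the assertion.

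For (ii), the first inequality is immediate, since for even $p$ (or odd $p$ with $v\geq 0$) the added term $(p-1)v^p(t)\omega_{1-\theta}(t)$ is nonnegative. The content is the second inequality, and here I would work from a ``fundamental identity.'' For smooth $v$, integration by parts recasts the Riemann--Liouville derivative in Marchaud form,
\[
\partial_t^\theta v(t)=\frac{1}{\Gamma(1-\theta)}\bigg[\frac{v(t)}{t^\theta}+\theta\int_0^t\frac{v(t)-v(s)}{(t-s)^{\theta+1}}\,ds\bigg],
\]
and likewise for $v^p$. Substituting these into $p\,v^{p-1}(t)\partial_t^\theta v(t)-\partial_t^\theta v^p(t)-(p-1)v^p(t)\omega_{1-\theta}(t)$, the purely local terms proportional to $t^{-\theta}$ cancel and one is left with
\[
\frac{\theta}{\Gamma(1-\theta)}\int_0^t\frac{v^p(s)-v^p(t)-p\,v^{p-1}(t)\big(v(s)-v(t)\big)}{(t-s)^{\theta+1}}\,ds.
\]
The numerator is precisely the gap between $x\mapsto x^p$ evaluated at $x=v(s)$ and its tangent line at $x=v(t)$; by convexity of $x^p$ on $\R$ (even $p$) or on $[0,\infty)$ (odd $p$, $v\geq 0$) it is nonnegative, and since the kernel $\theta(t-s)^{-\theta-1}$ is positive the whole expression is $\geq 0$, which is the claimed inequality.

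The delicate point is regularity. By Taylor's formula the numerator above equals $\tfrac{p(p-1)}{2}\,\xi^{p-2}\big(v(s)-v(t)\big)^2$ for some intermediate $\xi$, so it is quadratic in the increment; with $v\in\C^{\theta_1}([0,T])$ the integrand is $O\big((t-s)^{2\theta_1-\theta-1}\big)$, which is integrable exactly when $2\theta_1-\theta-1>-1$, i.e. $\theta_1>\theta/2$. This explains the hypothesis and shows that, although the Marchaud integrals of $v$ and of $v^p$ may diverge separately, their combination converges. I would therefore first prove the identity for smooth $v$, where every manipulation is licit, and then pass to H\"older $v$ by mollification, the uniform control of $(v(s)-v(t))^2$ under the $\C^{\theta_1}$ seminorm supplying a dominating function for the limit. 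The hardest step is the final \emph{a.e.} assertion: under the weaker hypotheses $v\in L_2(0,T)$ and $\omega_{1-\theta}*|v-v(0)|^2\in\overset{0}{W}\,^{1,2}(0,T)$ the derivatives exist only in the weak, convolution-derivative sense, so the inequalities must be read almost everywhere. Here I would again mollify, apply the smooth inequality, and pass to the limit; the Sobolev hypothesis on $\omega_{1-\theta}*|v-v(0)|^2$ is exactly what guarantees that the relevant weak fractional derivative exists and that the limit passage is legitimate. This limiting argument, rather than the convexity identity itself, is where the real care is required.
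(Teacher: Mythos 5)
The paper offers no proof of Proposition \ref{pA.1} at all: it is a compilation of known results, quoted from \cite[Theorem 2.4]{SKM}, \cite[Proposition 5.1]{SV} and \cite[Theorem 2.3]{VZ}, so the only meaningful comparison is with those sources. Your part (i) is correct and is the standard argument: $g=\omega_{1-\theta}*v$ vanishes at $t=0$ because $v$ is bounded, the semigroup identity gives $\omega_{\theta}*g=1*v$, and differentiating removes the boundary term (note that, like the statement itself, this tacitly assumes $g$ is absolutely continuous, so that $\partial_{t}^{\theta}v=g'$ is integrable and the fundamental theorem of calculus applies). Your core computation in part (ii) --- the Marchaud rewriting, the exact cancellation of the local $t^{-\theta}$ terms, the identification of the remainder as the tangent-line (convexity) gap of $x\mapsto x^{p}$ weighted by the positive kernel $\theta(t-s)^{-1-\theta}$, and the exponent count $2\theta_{1}-\theta>0$ explaining the hypothesis $\theta_{1}>\theta/2$ --- is also correct, and it is precisely the mechanism behind the cited Vergara--Zacher result.

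The genuine gap is in your limit passage. Mollifying $v$ and invoking a dominating function controls only the remainder integral $\int_{0}^{t}G_{\varepsilon}(s,t)(t-s)^{-1-\theta}ds$; it does not give convergence of the two fractional-derivative terms themselves. For $\theta/2<\theta_{1}<\theta$ the derivatives $\partial_{t}^{\theta}v$ and $\partial_{t}^{\theta}v^{p}$ need not exist for a general $\C^{\theta_{1}}$ function (their existence is implicitly part of the hypotheses), and even when they do exist, uniform convergence $v_{\varepsilon}\to v$ yields only $\partial_{t}^{\theta}v_{\varepsilon}\to\partial_{t}^{\theta}v$ in the sense of distributions, not pointwise; moreover a product such as $v_{\varepsilon}^{p-1}\partial_{t}^{\theta}v_{\varepsilon}$, of a uniformly convergent factor with a merely distributionally convergent one, need not converge to the product of the limits. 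This is exactly why the cited literature regularizes the kernel rather than the function: one first proves the inequality for nonnegative, nonincreasing kernels $k\in W^{1,1}(0,T)$, where $\frac{d}{dt}(k*v)=k(0)v(t)+(\dot{k}*v)(t)$ is classical and the same convexity argument applies with the sign coming from $\dot{k}\leq 0$, and then lets $k_{n}\to\omega_{1-\theta}$ through such kernels (Yosida-type approximation) while keeping $v$ fixed; the assumed existence of $\frac{d}{dt}(\omega_{1-\theta}*v)$, respectively the hypothesis $\omega_{1-\theta}*|v-v(0)|^{2}\in\overset{0}{W}\,^{1,2}(0,T)$ in the final claim, is what legitimizes the passage to the limit on the left-hand side and yields the a.e.\ version. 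Your treatment of that a.e.\ case inherits the same weakness, since it is deferred to the same mollification-of-$v$ device.
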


At this point, for given functions $ w_1$ and $w_2$, we define
\begin{align*}
\mathcal{J}_{\theta}(t)&=\mathcal{J}_{\theta}(t;w_1,w_2)=\int_{0}^{t}\frac{[w_1(t)-w_1(s)]}{(t-s)^{1+\theta}}[w_2(s)-w_2(0)]ds,\\
\mathcal{W}({w_1})&=\mathcal{W}(w_1;t,\tau)=\int_{0}^{1}\frac{\partial
w_{1}}{\partial z}(z)ds,\quad\text{where}\\
z&=st+(1-s)\tau,\qquad 0<\tau<t<T,
\end{align*}
and assert the results related to the fractional differentiation
of the product.
\begin{proposition}\label{pA.2}
Let $\theta\in(0,1),$ $w_1\in\C^{1}([0,T]),$ $w_2\in\C([0,T])$.

\noindent(i) If $\mathbf{D}_{t}^{\theta}w_2$ belongs either to
$\C([0,T])$ or to $L_{p}(0,T)$, $p\geq 2,$ then there are
equalities:
\begin{align*}
\mathbf{D}_{t}^{\theta}(w_1w_2)&=w_1(t)\mathbf{D}_{t}^{\theta}w_2(t)+w_{2}(0)\mathbf{D}_{t}^{\theta}w_1(t)+\frac{\theta}{\Gamma(1-\theta)}
\mathcal{J}_{\theta}(t;w_{1},w_2),\\
\partial_{t}^{\theta}(w_1w_2)&=w_1(t)\mathbf{D}_{t}^{\theta}w_2(t)+w_{2}(0)\partial_{t}^{\theta}w_1(t)+\frac{\theta}{\Gamma(1-\theta)}\mathcal{J}_{\theta}(t;w_{1},w_2),
\end{align*}
and $\mathbf{D}_{t}^{\theta}(w_1w_2),$
$\partial_{t}^{\theta}(w_1w_2)$ have the regularity:
\[
\mathbf{D}_{t}^{\theta}(w_1w_2),\,\partial_{t}^{\theta}(w_1w_2)\in
\begin{cases}
\C([0,T]),\quad\text{if}\quad \mathbf{D}_{t}^{\theta}w_2\in
\C([0,T]),\\
L_{p}(0,T),\quad\text{if}\quad \mathbf{D}_{t}^{\theta}w_2\in
L_{p}(0,T).
\end{cases}
\]

 \noindent (ii) For any $\theta_{1}\geq\theta>0$ and each
$t\in[0,T]$, there hold
\begin{align*}
I_{t}^{\theta_1}(w_1\partial_{t}^{\theta}w_2)(t)&=I_{t}^{\theta_{1}-\theta}(w_1w_2)(t)-w_2(0)[I_{t}^{\theta_{1}-\theta}w_1-I_{t}^{\theta_{1}}(w_{1}\omega_{1-\theta})(t)]
- \theta I_{t}^{1+\theta_{1}-\theta}(\mathcal{W}(w_1)w_2)(t),\\
I_{t}^{\theta_1}(w_1\mathbf{D}_{t}^{\theta}w_2)(t)&=I_{t}^{\theta_{1}-\theta}(w_1w_2)(t)-w_2(0)I_{t}^{\theta_{1}-\theta}w_1
- \theta I_{t}^{1+\theta_{1}-\theta}(\mathcal{W}(w_1)w_2)(t).
\end{align*}

Besides
\begin{equation}\label{A.0}
I_{t}^{\theta}\mathcal{W}(w_1)(0)=0\qquad\text{and}\qquad
I_{t}^{\theta}\mathcal{W}(w_1)\in\C([0,T]).
\end{equation}
\end{proposition}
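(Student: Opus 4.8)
The plan is to build everything on the elementary representation of $\mathcal{W}$ and on the cancellation between fractional integration and differentiation. First I would record that, by the fundamental theorem of calculus combined with the substitution $z=st+(1-s)\tau$, one has $\mathcal{W}(w_1;t,\tau)=\frac{w_1(t)-w_1(\tau)}{t-\tau}$, so that $\mathcal{W}(w_1)$ is bounded by $\|w_1'\|_{\C([0,T])}$ and jointly continuous. This identity yields \eqref{A.0} at once: writing $I_{t}^{\theta}\mathcal{W}(w_1)(t)=\int_0^t\omega_\theta(t-\tau)\mathcal{W}(w_1;t,\tau)\,d\tau$, the kernel $\omega_\theta$ is weakly singular and $\mathcal{W}(w_1)$ is bounded, so the integral is continuous on $[0,T]$ and vanishes as $t\to 0$.

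For part (i) I would first peel off the initial value by linearity, writing $w_2=\widetilde{w}_2+w_2(0)$ with $\widetilde{w}_2=w_2-w_2(0)$; since $\partial_{t}^{\theta}(w_1w_2(0))=w_2(0)\partial_{t}^{\theta}w_1$, it remains to treat $\partial_{t}^{\theta}(w_1\widetilde{w}_2)$. In the convolution $(\omega_{1-\theta}*(w_1\widetilde{w}_2))(t)$ I insert $w_1(s)=w_1(t)-(w_1(t)-w_1(s))$, splitting it as $w_1(t)(\omega_{1-\theta}*\widetilde{w}_2)(t)$ minus $Q(t):=\int_0^t\omega_{1-\theta}(t-s)(w_1(t)-w_1(s))\widetilde{w}_2(s)\,ds$. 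Differentiating the first piece by the product rule produces $w_1'(t)(\omega_{1-\theta}*\widetilde{w}_2)(t)+w_1(t)\partial_{t}^{\theta}\widetilde{w}_2(t)$. For $Q$, the factor $w_1(t)-w_1(s)=O(t-s)$ kills the boundary contribution at $s=t$, so differentiation under the integral sign is legitimate and gives $w_1'(t)(\omega_{1-\theta}*\widetilde{w}_2)(t)-\frac{\theta}{\Gamma(1-\theta)}\mathcal{J}_\theta(t;w_1,w_2)$. The two copies of $w_1'(t)(\omega_{1-\theta}*\widetilde{w}_2)$ cancel, leaving $\partial_{t}^{\theta}(w_1\widetilde{w}_2)=w_1(t)\mathbf{D}_{t}^{\theta}w_2+\frac{\theta}{\Gamma(1-\theta)}\mathcal{J}_\theta$, which is the asserted formula for $\partial_{t}^{\theta}$; the one for $\mathbf{D}_{t}^{\theta}$ follows by subtracting $\omega_{1-\theta}(t)w_1(0)w_2(0)$ and using \eqref{2.2}. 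The stated regularity is then read off term by term: $w_1\mathbf{D}_{t}^{\theta}w_2$ inherits the class of $\mathbf{D}_{t}^{\theta}w_2$, the derivative $\mathbf{D}_{t}^{\theta}w_1$ is continuous since $w_1\in\C^1$, and $\mathcal{J}_\theta=\Gamma(1-\theta)\int_0^t\omega_{1-\theta}(t-s)\mathcal{W}(w_1;t,s)\widetilde{w}_2(s)\,ds$ is a weakly singular integral of bounded data, hence continuous.

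For part (ii) I would again reduce to $\widetilde{w}_2$ and set $g=\omega_{1-\theta}*\widetilde{w}_2$, so that $g(0)=0$ and $\mathbf{D}_{t}^{\theta}w_2=\partial_{t}^{\theta}\widetilde{w}_2=g'$. Applying $I_{t}^{\theta_1}$ to $w_1g'$ and splitting $w_1(r)=w_1(t)-(w_1(t)-w_1(r))$, the first part equals $w_1(t)\,I_{t}^{\theta_1}\partial_{t}^{\theta}\widetilde{w}_2=w_1(t)\,I_{t}^{\theta_1-\theta}\widetilde{w}_2$, where I use the cancellation $I_{t}^{\theta_1}\partial_{t}^{\theta}=I_{t}^{\theta_1-\theta}$ valid for $\theta_1\geq\theta$, a consequence of Proposition \ref{pA.1}(i) together with the semigroup identity $\omega_{\theta_1-\theta}*\omega_\theta=\omega_{\theta_1}$ from Proposition \ref{pA.0}(i). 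Expanding $w_1(t)\,I_{t}^{\theta_1-\theta}\widetilde{w}_2$ back into $I_{t}^{\theta_1-\theta}(w_1w_2)$ plus the $w_2(0)$-corrections, and rewriting the remaining integral through $w_1(t)-w_1(r)=(t-r)\mathcal{W}(w_1;t,r)$ and the elementary relation $\omega_{\theta_1}(\tau)\tau=\theta_1\omega_{\theta_1+1}(\tau)$, the whole expression collapses to $I_{t}^{\theta_1-\theta}(w_1w_2)-w_2(0)I_{t}^{\theta_1-\theta}w_1-\theta I_{t}^{1+\theta_1-\theta}(\mathcal{W}(w_1)w_2)$ after an application of Fubini and the convolution identities of Proposition \ref{pA.0}. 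The first formula of (ii) then follows from the second via $\partial_{t}^{\theta}w_2=\mathbf{D}_{t}^{\theta}w_2+\omega_{1-\theta}(t)w_2(0)$ and the linearity of $I_{t}^{\theta_1}$, which reproduces the bracketed term $I_{t}^{\theta_1-\theta}w_1-I_{t}^{\theta_1}(w_1\omega_{1-\theta})$.

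The main obstacle is the treatment of the singular remainders ($Q$ in part (i) and the $\mathcal{W}$-integral in part (ii)) under the sole hypothesis $w_1\in\C^1$: the naive Leibniz rule fails, and one cannot integrate by parts in any way that would invoke $w_1''$. Everything must therefore be arranged so that the derivative of $w_1$ enters only through the bounded, continuous average $\mathcal{W}(w_1)$, while the vanishing of the boundary contributions, guaranteed by $w_1(t)-w_1(s)=O(t-s)$, is what keeps each weakly singular kernel integrable and legitimizes both the differentiation under the integral in (i) and the Fubini interchange in (ii). Verifying that the reorganized singular integral in (ii) matches the stated term $I_{t}^{1+\theta_1-\theta}(\mathcal{W}(w_1)w_2)$ exactly, rather than merely up to an equivalent iterated convolution, is the most delicate piece of bookkeeping.
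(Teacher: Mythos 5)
Your handling of \eqref{A.0} is essentially the paper's own argument (the paper bounds $|I_{t}^{\theta}\mathcal{W}(w_1)|\leq \|w_1\|_{\C^{1}([0,T])}I_{t}^{\theta}1\leq Ct^{\theta}$ and gets continuity from the theory of weakly singular integrals), and your proof of part (i) is correct and genuinely more self-contained than the paper's, which gives no argument for the identities in (i) and (ii) beyond citing \cite[Proposition 5.5]{SV}. The splitting $w_1(s)=w_1(t)-\big(w_1(t)-w_1(s)\big)$, the fact that $w_1(t)-w_1(s)=O(t-s)$ kills the moving boundary term and legitimizes differentiation under the integral sign, and the cancellation of the two copies of $w_1'(t)(\omega_{1-\theta}*\widetilde w_2)(t)$ do yield both formulas of (i), with the regularity read off term by term as you say.

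Part (ii) contains a genuine gap, located exactly at the step you flag as ``the most delicate piece of bookkeeping'': the collapse you assert is false. After Fubini, the kernel $\mathcal{W}(w_1;\tau,s)$ inside the iterated integral carries the \emph{intermediate} convolution time $\tau$, not the outer time $t$, and no convolution identity removes that dependence unless $w_1$ is affine. Concretely, take $w_1(t)=t^{2}$, $w_2(t)=t$ (so $w_2(0)=0$ and $\mathbf{D}_{t}^{\theta}w_2=\omega_{2-\theta}$). Then $w_1\mathbf{D}_{t}^{\theta}w_2=(2-\theta)(3-\theta)\omega_{4-\theta}$, so the left-hand side of the second formula equals $(2-\theta)(3-\theta)\,\omega_{4+\theta_1-\theta}(t)$, while the right-hand side, computed with $\mathcal{W}(w_1;t,s)=t+s$, equals $\big[6-\theta(5+\theta_1-\theta)\big]\omega_{4+\theta_1-\theta}(t)$; the difference is $\theta\theta_1\,\omega_{4+\theta_1-\theta}(t)\neq 0$. (With $w_1(t)=t$, $w_2\equiv 1$ one sees a second defect of the literal single-convolution reading: the last term must involve $w_2-w_2(0)$, not $w_2$.) What your method actually proves --- and what is obtained in two lines by applying $I_{t}^{\theta_1}$ to the second identity of part (i) and using $I_{t}^{\theta_1}\partial_{t}^{\theta}=I_{t}^{\theta_1-\theta}$ on the continuous functions $w_1w_2$ and $w_1$ --- is the nested identity
\begin{equation*}
I_{t}^{\theta_1}(w_1\mathbf{D}_{t}^{\theta}w_2)(t)=I_{t}^{\theta_{1}-\theta}(w_1w_2)(t)-w_2(0)I_{t}^{\theta_{1}-\theta}w_1(t)
-\frac{\theta}{\Gamma(1-\theta)}\,I_{t}^{\theta_1}\big[\mathcal{J}_{\theta}(\cdot\,;w_{1},w_2)\big](t),
\end{equation*}
in which the remainder is $I_{t}^{\theta_1}$ of the one-variable function $\tau\mapsto\mathcal{J}_{\theta}(\tau;w_1,w_2)$. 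This nested form is the only reading under which the proposition is true, and it is all that the paper ever needs (in Section \ref{s5} only the bound $|\mathcal{W}(\rho)|\leq\|\rho'\|_{L_{\infty}}$ enters the Gronwall argument). So your strategy for (ii) is salvageable, but only if you stop at the iterated convolution; the exact un-nested identity you claim to reach cannot be proved, because it is false.
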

\begin{proof}
Equalities in (i) and (ii) (excepting relations in \eqref{A.0})
are stated in \cite[Proposition 5.5]{SV}. Thus,  here we are left
to verify \eqref{A.0}.  Bearing in mind the regularity of $w_1$,
we have
\[
0\leq |I_{t}^{\theta}\mathcal{W}(w_1)(t)|\leq
\|w_1\|_{\C^{1}([0,T])}I_{t}^{\theta}1\leq C
t^{\theta}\|w_1\|_{\C^{1}([0,T])}.
\]
Obviously, the last inequality leads to the equality in
\eqref{A.0}.

Coming to the regularity of $I_{t}^{\theta}\mathcal{W}(w_1)$, it
is a simple consequence of \cite[Lemma 2.8]{KST} and the
smoothness of $w_1$.
\end{proof}

\begin{corollary}\label{cA.3}
Let $\theta\in(0,1),$ and let $w_1,$ $w_2$ and $\frac{\partial
w_1}{\partial t}\in L_{\infty}([0,+\infty))$. We assume also that
$w_1$ is a non-negative non-decreasing function, and there is a
positive fixed $T_{w}>0$ such that
\[
w_1(t)=w(T_{w})\quad \text{for all}\quad t\geq T_{w}.
\]
Then, for each $t\geq 0,$ the function
$\mathcal{J}_{\theta}(t;w_1^{2},w_{2}^{2})$ satisfies the bound
\[
\frac{\mathcal{J}_{\theta}(t;w_1^{2},w_{2}^{2})}{2w_{1}(t)}\geq
-\frac{Cw_{2}^{2}(0)}{1-\theta}T_{w}^{1-\theta}\bigg\|\frac{d
w_1}{d t}\bigg\|_{L_{\infty}([0,+\infty))}
\]
with the positive $C$ being independent of $t$ and $T_{w}$.
\end{corollary}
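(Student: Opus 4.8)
The plan is to isolate the sign-indefinite contribution coming from $w_2(0)$ and to control it by a quantity that is uniformly bounded in $t$. I may assume $w_1(t)>0$, since $w_1(t)=0$ forces $w_1\equiv 0$ on $[0,t]$ by non-negativity and monotonicity, whence $\mathcal{J}_{\theta}(t;w_1^2,w_2^2)\equiv 0$ and the claim is trivial. Expanding the definition with $w_1^2,w_2^2$ in place of $w_1,w_2$, I would split
\[
\mathcal{J}_{\theta}(t;w_1^2,w_2^2)=\int_0^t\frac{w_1^2(t)-w_1^2(s)}{(t-s)^{1+\theta}}\,w_2^2(s)\,ds
-w_2^2(0)\int_0^t\frac{w_1^2(t)-w_1^2(s)}{(t-s)^{1+\theta}}\,ds.
\]
Since $w_1$ is non-negative and non-decreasing, $w_1^2$ is non-decreasing, so $w_1^2(t)-w_1^2(s)\ge 0$ for $s\le t$; together with $w_2^2(s)\ge 0$ this makes the first integral non-negative and yields
\[
\mathcal{J}_{\theta}(t;w_1^2,w_2^2)\ge -w_2^2(0)\int_0^t\frac{w_1^2(t)-w_1^2(s)}{(t-s)^{1+\theta}}\,ds .
\]

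Next I would remove the factor $2w_1(t)$ in the denominator by the elementary estimate $w_1^2(t)-w_1^2(s)=[w_1(t)-w_1(s)][w_1(t)+w_1(s)]\le 2w_1(t)[w_1(t)-w_1(s)]$, valid because $0\le w_1(s)\le w_1(t)$. Dividing the previous bound by $2w_1(t)>0$ then reduces the whole matter to estimating
\[
\frac{\mathcal{J}_{\theta}(t;w_1^2,w_2^2)}{2w_1(t)}\ge -w_2^2(0)\int_0^t\frac{w_1(t)-w_1(s)}{(t-s)^{1+\theta}}\,ds,
\]
so that it remains to bound the last integral by $C(1-\theta)^{-1}T_w^{1-\theta}\|dw_1/dt\|_{L_{\infty}([0,+\infty))}$ uniformly in $t$.

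For this I would write $w_1(t)-w_1(s)=\int_s^t\frac{dw_1}{d\tau}(\tau)\,d\tau$ and interchange the order of integration (Fubini, all integrands non-negative), obtaining
\[
\int_0^t\frac{w_1(t)-w_1(s)}{(t-s)^{1+\theta}}\,ds=\frac{1}{\theta}\int_0^t\frac{dw_1}{d\tau}(\tau)\big[(t-\tau)^{-\theta}-t^{-\theta}\big]\,d\tau .
\]
Because $\frac{dw_1}{d\tau}\ge 0$ and $\frac{dw_1}{d\tau}(\tau)=0$ for $\tau\ge T_w$, I would discard $-t^{-\theta}$, replace the derivative by $\|dw_1/dt\|_{L_{\infty}([0,+\infty))}$, and integrate over $[0,\min\{t,T_w\}]$, which produces the bound $\tfrac{1}{\theta(1-\theta)}\big[t^{1-\theta}-(t-\min\{t,T_w\})^{1-\theta}\big]\|dw_1/dt\|_{L_{\infty}([0,+\infty))}$.

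The final, and only genuinely delicate, point is to make this quantity uniform in $t$ including the regime $t>T_w$. Here I would invoke the subadditivity of the concave map $x\mapsto x^{1-\theta}$ on $[0,+\infty)$, which vanishes at $0$: writing $t=(t-T_w)+T_w$ gives $t^{1-\theta}-(t-T_w)^{1-\theta}\le T_w^{1-\theta}$, while for $t\le T_w$ one simply has $t^{1-\theta}\le T_w^{1-\theta}$. In both cases the bracket is controlled by $T_w^{1-\theta}$, so
\[
\int_0^t\frac{w_1(t)-w_1(s)}{(t-s)^{1+\theta}}\,ds\le \frac{1}{\theta(1-\theta)}\,T_w^{1-\theta}\,\bigg\|\frac{dw_1}{dt}\bigg\|_{L_{\infty}([0,+\infty))}.
\]
Combining this with the two displayed lower bounds yields the assertion with $C=1/\theta$, which is independent of $t$ and $T_w$. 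The main obstacle is exactly this uniform-in-$t$ step: monotonicity alone is insufficient, and it is the compact support of $dw_1/dt$ in $[0,T_w]$ together with the concavity (subadditivity) of $x^{1-\theta}$ that prevents the estimate from growing with $t$.
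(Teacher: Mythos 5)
Your proposal is correct and follows essentially the same route as the paper: the same splitting of $\mathcal{J}_{\theta}(t;w_1^2,w_2^2)$ to isolate the $w_2^2(0)$ term, the same factorization $w_1^2(t)-w_1^2(s)\le 2w_1(t)[w_1(t)-w_1(s)]$ to cancel the denominator, and the same exploitation of the constancy of $w_1$ beyond $T_w$ to obtain a bound uniform in $t$. The only difference is in the final step: where you bound $\int_0^t [w_1(t)-w_1(s)](t-s)^{-1-\theta}\,ds$ via the fundamental theorem of calculus, Fubini and subadditivity of $x\mapsto x^{1-\theta}$ (yielding $C=1/\theta$), the paper treats the regimes $t\le T_w$ and $t>T_w$ directly with pointwise Lipschitz estimates, arriving at the same $T_w^{1-\theta}\|dw_1/dt\|_{L_\infty}/(1-\theta)$ bound.
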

\begin{proof}
Collecting the definition of
$\mathcal{J}_{\theta}(t;w_{1}^{2},w_{2}^{2})$ with nondecreasing
and nonnegativity of $w_1$ arrives at the estimate
\[
\frac{\mathcal{J}_{\theta}(t;w_{1}^{2},w_{2}^{2})}{2w_{1}(t)}\geq-
\frac{w_{2}^{2}(0)}{2w_{1}(t)}\int_{0}^{t}\frac{w_{1}^{2}(t)-w_{1}^{2}(s)}{(t-s)^{1+\theta}}ds
\geq
-w_{2}^{2}(0)\int_{0}^{t}\frac{w_{1}(t)-w_{1}(s)}{(t-s)^{1+\theta}}ds\equiv-w_{2}^{2}(0)i(t).
\]

\noindent In order to examine $i(t)$, we consider   two different
cases: $t\in[0,T_{w}]$ and $t>T_{w}$. If $t\leq T_{w}$, then bering
in mind the regularity of $w_{1}(t)$, we end up with the bound
\[
i(t)\leq C\frac{T_{w}^{1-\theta}}{1-\theta}\bigg\|\frac{d w_1}{d
t}\bigg\|_{L_{\infty}([0,+\infty))}
\]
with $C$ being independent of $t$ and  $T_{w}$.

In the case of $t>T_{w}$, we easily conclude that
\[
i(t)=\int_{0}^{T_{w}}\frac{w_{1}(t)-w_{1}(s)}{(t-s)^{1+\theta}}ds\leq
C\bigg\|\frac{d w_1}{d t}\bigg\|_{L_{\infty}([0,+\infty))}
\int_{0}^{T_{w}}\frac{ds}{(T_{w}-s)^{\theta}}
=C\frac{T_{w}^{1-\theta}}{1-\theta}\bigg\|\frac{d w_1}{d
t}\bigg\|_{L_{\infty}([0,+\infty))}.
\]
Finally, collecting all estimates arrives at the desired bound and
completes the proof of this claim.
\end{proof}
\begin{corollary}\label{cA.4}
Let $\theta\in(0,1)$, and let $w_1$ meet requirements of Corollary
\ref{cA.3}. Then the bound
\[
\frac{|\partial_{t}^{\theta} w_{1}^{2}|}{w_{1}(t)}\leq
\frac{w_{1}(0)}{t^{\theta}\Gamma(1-\theta)} + \frac{2\big\|\frac{d
w_1}{d
t}\big\|_{L_{\infty}([0,+\infty))}}{\Gamma(2-\theta)}T_{w}^{1-\theta}
\]
holds for each $t\geq 0$.
\end{corollary}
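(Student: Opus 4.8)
The plan is to reduce everything to the Caputo derivative of $w_1^2$ and exploit the monotonicity of $w_1$. First I would use representation \eqref{2.2} with $v=w_1^2$ to write
\[
\partial_{t}^{\theta}w_1^2(t)=\mathbf{D}_{t}^{\theta}w_1^2(t)+\omega_{1-\theta}(t)w_1^2(0).
\]
Since $w_1$ is nonnegative and nondecreasing, the function $w_1^2$ is nondecreasing as well, so its Caputo derivative $\mathbf{D}_{t}^{\theta}w_1^2$ is nonnegative; as $\omega_{1-\theta}(t)\geq0$ and $w_1^2(0)\geq0$, the whole right-hand side is nonnegative and the absolute value may be dropped. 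It then suffices to estimate the two summands, after division by $w_1(t)$, separately.

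For the Caputo part I would use that $\frac{dw_1}{dt}\in L_{\infty}([0,+\infty))$, so $w_1^2$ is absolutely continuous with $(w_1^2)'=2w_1w_1'$ a.e., whence
\[
\mathbf{D}_{t}^{\theta}w_1^2(t)=\frac{1}{\Gamma(1-\theta)}\int_0^t\frac{2w_1(s)w_1'(s)}{(t-s)^{\theta}}\,ds.
\]
Here $w_1'\ge0$ (monotonicity) and $w_1(s)\le w_1(t)$, so $w_1(s)/w_1(t)\le1$, giving
\[
\frac{\mathbf{D}_{t}^{\theta}w_1^2(t)}{w_1(t)}\le\frac{2}{\Gamma(1-\theta)}\int_0^t\frac{w_1'(s)}{(t-s)^{\theta}}\,ds.
\]
The crucial observation is that $w_1'(s)=0$ for $s>T_{w}$, so the integral effectively runs only up to $\min\{t,T_{w}\}$. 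Bounding $w_1'$ by $\big\|\frac{dw_1}{dt}\big\|_{L_{\infty}([0,+\infty))}$ and using the subadditivity of $s\mapsto s^{1-\theta}$ (i.e.\ $t^{1-\theta}-(t-T_{w})^{1-\theta}\le T_{w}^{1-\theta}$ when $t>T_{w}$) yields $\int_0^{\min\{t,T_{w}\}}(t-s)^{-\theta}\,ds\le T_{w}^{1-\theta}/(1-\theta)$ uniformly in $t\ge0$. Combining this with the Gamma identity $\Gamma(2-\theta)=(1-\theta)\Gamma(1-\theta)$ reproduces exactly the second term on the right-hand side of the claimed inequality.

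For the remaining summand I would use $\omega_{1-\theta}(t)=t^{-\theta}/\Gamma(1-\theta)$ together with $w_1(t)\ge w_1(0)$ (monotonicity again), so that $w_1^2(0)/w_1(t)\le w_1(0)$ (the term vanishing when $w_1(0)=0$); this gives $\omega_{1-\theta}(t)w_1^2(0)/w_1(t)\le w_1(0)/(t^{\theta}\Gamma(1-\theta))$, the first term. Adding the two bounds completes the argument. I expect the only delicate point to be the uniform-in-$t$ control of $\int_0^{\min\{t,T_{w}\}}(t-s)^{-\theta}\,ds$ in the regime $t>T_{w}$, which is precisely where the stabilization hypothesis $w_1(t)=w_1(T_{w})$ for $t\ge T_{w}$ enters through the concavity/subadditivity estimate; the rest is the bookkeeping of the Gamma-function identity.
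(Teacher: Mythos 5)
Your proof is correct and takes essentially the same route as the paper: write $\partial_{t}^{\theta}w_{1}^{2}$ as the Caputo integral $\frac{2}{\Gamma(1-\theta)}\int_{0}^{t}(t-s)^{-\theta}w_{1}(s)w_{1}'(s)\,ds$ plus $\omega_{1-\theta}(t)w_{1}^{2}(0)$, use monotonicity of $w_{1}$ to absorb the division by $w_{1}(t)$, and exploit the stabilization $w_{1}'\equiv 0$ beyond $T_{w}$ to get the uniform-in-$t$ bound $T_{w}^{1-\theta}/(1-\theta)$. The only cosmetic difference is in the regime $t>T_{w}$, where you use subadditivity of $s\mapsto s^{1-\theta}$ while the paper compares the kernel via $(t-\tau)^{-\theta}\leq(T_{w}-\tau)^{-\theta}$; both give the same constant.
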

\begin{proof}
The straightforward calculations provide
\[
\frac{|\partial_{t}^{\theta} w_{1}^{2}|}{w_{1}(t)}\leq
\frac{w_{1}(0)}{t^{\theta}\Gamma(1-\theta)}
+\frac{2}{w_{1}(t)\Gamma(1-\theta)} \int_{0}^{t}w_{1}(\tau)\frac{d
w_1}{d \tau}(\tau)(t-\tau)^{-\theta}d\tau.
\]
Then, bearing in mind the smoothness of $w_1$, we deduce
\[
w_{1}^{-1}(t)\int_{0}^{t}w_{1}(\tau)\frac{d w_1}{d
\tau}(\tau)(t-\tau)^{-\theta}d\tau \leq
\frac{T_{w}^{1-\theta}}{1-\theta} \bigg\|\frac{d w_1}{d
t}\bigg\|_{L_{\infty}([0,+\infty))}.
\]
It is worth noting that, in the case of $t>T_{w}$, we exploited
 the easily verified inequality
\[
\int_{0}^{t}(t-\tau)^{-\theta}w_{1}(\tau)\frac{d w_{1}}{d
\tau}(\tau) d\tau \leq w_{1}(t)\bigg\|\frac{d w_{1}}{d
t}\bigg\|_{L_{\infty}([0,+\infty))}
\int_{0}^{T_{w}}(T_{w}-\tau)^{-\theta}d\tau.
\]
At last, collecting all estimates ends the proof of this statement.
\end{proof}

We complete this section with some results related to the function
$\mathcal{E}_{\bar{\beta},\beta_{0}}(t;\bar{d})$ introduced in
\eqref{4.1}.

\begin{lemma}\label{lA.1}
The function
$\mathcal{E}_{\bar{\beta},\beta_{0}}=\mathcal{E}_{\bar{\beta},\beta_{0}}(t;\bar{d})$
has the following properties.

\noindent\verb"(i)" $\mathcal{E}_{\bar{\beta},\beta_0}(t;\bar{d})$
is a completely monotonic function for $t>0$ if
$0<\beta_{i}<\beta_{0}<1,$ $i=1,2,...,m.$

\noindent\verb"(ii)" The following asymptotic expansions hold
\begin{align*}
\mathcal{E}_{\bar{\beta},\beta_0}(t;\bar{d})&\underset{t\to+\infty}{\sim}
\begin{cases}
\frac{t^{\beta_0-\beta_1-1}}{d_{1}\Gamma(\beta_0-\beta_1)},\qquad\quad\text{if}\qquad \beta_0\neq\beta_{1},\\
\\
-\frac{d_2t^{\beta_2-\beta_1-1}}{d_{1}\Gamma(\beta_2-\beta_1)},\qquad\text{if}\qquad \beta_0=\beta_{1},\\
\end{cases}\\
\mathcal{E}_{\bar{\beta},\beta_0}(t;\bar{d})&\underset{t\to
0}{\sim}\frac{t^{\beta_0-1}}{\Gamma(\beta_0)}-\sum_{i=1}^{m}d_{i}\frac{t^{\beta_0+\beta_{i}-1}}{\Gamma(\beta_{0}+\beta_{i})}.
\end{align*}

\noindent\verb"(iii)" For $\theta>0$ and the integer $n<\beta_0$,
there hold
\[
I_{t}^{\theta}\mathcal{E}_{\bar{\beta},\beta_0}(t)=\mathcal{E}_{\bar{\beta},\beta_0+\theta}(t;\bar{d}),\quad
\bigg(\frac{d}{d
t}\bigg)^{n}\mathcal{E}_{\bar{\beta},\beta_0}(t;\bar{d})=\mathcal{E}_{\bar{\beta},\beta_0-n}(t;\bar{d}).
\]

\noindent\verb"(iv)" Let $k=k(t)$ be a non-negative summable
function  satisfying the estimate
\[
k\leq C t^{-\theta},
\]
for some $\theta\in[0,1]$ and all $t\geq 0$.

We also assume that $\beta_{1}=\beta_0<1$.
 Then the function
$G(t)=\mathcal{E}_{\bar{\beta},\beta_0}*k$ is non-negative and
$G(t)\in L_1(\R^{+})$.

\noindent\verb"(v)" Let $\beta_1\leq \beta_0<1$, then the
following relations hold:
\begin{align*}
\mathcal{E}_{\bar{\beta},\beta_0}*1&=\mathcal{E}_{\bar{\beta},1+\beta_0}(t;\bar{d})\sim
\begin{cases}
\frac{t^{\beta_0-\beta_1}}{d_1\Gamma(1+\beta_0-\beta_1)},\qquad\qquad\qquad\text{if}\quad
t\to+\infty,\\
\\
\frac{t^{\beta_{0}}}{\Gamma(1+\beta_0)}-\sum_{i=1}^{m}\frac{d_{i}t^{\beta_0+\beta_i}}{\Gamma(1+\beta_0+\beta_{i})},\quad\text{if}\quad
t\to 0,
\end{cases}\\
\mathcal{E}_{\bar{\beta},\beta_0}*\omega_{1-\beta_{j}}&=\mathcal{E}_{\bar{\beta},1+\beta_0-\beta_{j}}(t;\bar{d})\sim
\begin{cases}
\frac{t^{\beta_0-\beta_1-\beta_{j}}}{d_1\Gamma(1+\beta_0-\beta_{j}-\beta_1)},
\qquad\qquad\qquad\text{if}\quad
t\to+\infty,\\
\\
\frac{t^{\beta_{0}-\beta_{j}}}{\Gamma(1+\beta_0-\beta_{j})}-\sum_{i=1}^{m}\frac{d_{i}t^{\beta_0-\beta_{j}+\beta_i}}
{\Gamma(1+\beta_0-\beta_{j}+\beta_{i})},\quad\text{if}\quad t\to 0,
\end{cases}\\
\end{align*}
for each $j=0,1,...,m.$
\end{lemma}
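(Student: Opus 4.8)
The plan is to handle all five items through a single device, namely the Laplace transform of $\mathcal{E}_{\bar\beta,\beta_0}(\cdot;\bar d)$. Writing out the defining series behind \eqref{4.1} and combining the multinomial theorem with $\mathcal{L}[t^{a-1}](p)=\Gamma(a)p^{-a}$, I would first establish, for $\mathrm{Re}\,p$ large,
\[
\widehat{\mathcal{E}}_{\bar\beta,\beta_0}(p)=\frac{p^{-\beta_0}}{1+\sum_{j=1}^{m}d_{j}p^{-\beta_j}}=\frac{1}{p^{\beta_0}+\sum_{j=1}^{m}d_{j}p^{\beta_0-\beta_j}}.
\]
Since the multivariate Mittag--Leffler function is entire in its arguments, the series converges locally uniformly on $(0,\infty)$, which legitimises every termwise manipulation below.

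For \verb"(iii)" I would apply $I_t^{\theta}$ term by term: by Proposition \ref{pA.0}(i) one has $I_t^{\theta}t^{a-1}=\tfrac{\Gamma(a)}{\Gamma(a+\theta)}t^{a+\theta-1}$, so with $a=\beta_0+\sum_j\beta_j k_j$ the Gamma-factors telescope and the series reassembles into $\mathcal{E}_{\bar\beta,\beta_0+\theta}$; the derivative identity follows by termwise differentiation (or by inverting this relation). Part \verb"(v)" is then immediate: since $\omega_1\equiv 1$ and $\omega_{1-\beta_j}$ generates $I_t^{1-\beta_j}$, relation \verb"(iii)" gives $\mathcal{E}_{\bar\beta,\beta_0}*1=I_t^{1}\mathcal{E}_{\bar\beta,\beta_0}=\mathcal{E}_{\bar\beta,1+\beta_0}$ and $\mathcal{E}_{\bar\beta,\beta_0}*\omega_{1-\beta_j}=\mathcal{E}_{\bar\beta,1+\beta_0-\beta_j}$, after which the stated asymptotics are read off from \verb"(ii)" applied to the shifted indices (the shifted lower parameter always exceeds $\beta_1$, so only the first large-$t$ regime occurs).

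For \verb"(i)" I would invoke the theory of Bernstein functions. Under $0<\beta_j<\beta_0<1$ every exponent $\beta_0$ and $\beta_0-\beta_j$ lies in $(0,1)$, so each power $p\mapsto p^{\beta_0-\beta_j}$ is a complete Bernstein function; as the $d_j\ge 0$, the denominator $D(p)=p^{\beta_0}+\sum_j d_j p^{\beta_0-\beta_j}$ is a finite sum of complete Bernstein functions, hence itself complete Bernstein. Consequently $\widehat{\mathcal{E}}_{\bar\beta,\beta_0}=1/D$ is a Stieltjes function, and by the classical duality (a Stieltjes function is exactly the Laplace transform of a completely monotonic function) the original $\mathcal{E}_{\bar\beta,\beta_0}(t;\bar d)$ is completely monotonic on $(0,\infty)$, in particular non-negative. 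The same argument, now with the constant term $d_1$ entering $D$ when $\beta_1=\beta_0$, still yields $\mathcal{E}_{\bar\beta,\beta_0}\ge 0$, which is the positivity needed in \verb"(iv)".

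The small-$t$ expansion in \verb"(ii)" is obtained by retaining the $k=0,1$ terms of the series. The large-$t$ expansion is the crux and I expect it to be the main obstacle: it requires inverting $\widehat{\mathcal{E}}_{\bar\beta,\beta_0}$ near $p=0$, which the complete monotonicity of \verb"(i)" licenses via a Tauberian (Karamata) theorem for monotone functions. As $p\to 0^+$ the slowest-vanishing term of $D$ governs the behaviour; for $\beta_0\neq\beta_1$ this gives $\widehat{\mathcal{E}}\sim p^{\beta_1-\beta_0}/d_1$, hence $\mathcal{E}_{\bar\beta,\beta_0}\sim t^{\beta_0-\beta_1-1}/[d_1\Gamma(\beta_0-\beta_1)]$, while for $\beta_0=\beta_1$ the leading constant $1/d_1$ contributes no decay and the next term $-(d_2/d_1^{2})p^{\beta_1-\beta_2}$ dictates the rate, the sign being reconciled through $\Gamma(\beta_2-\beta_1)<0$. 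Care is needed to isolate the correct singular term in each case and to confirm that the remaining contributions are genuinely of lower order. Finally, for \verb"(iv)" with $\beta_1=\beta_0<1$ one has $\widehat{\mathcal{E}}_{\bar\beta,\beta_0}(0^+)=1/d_1<\infty$, so by monotone convergence $\int_0^{\infty}\mathcal{E}_{\bar\beta,\beta_0}=1/d_1$, i.e. $\mathcal{E}_{\bar\beta,\beta_0}\in L_1(\R^+)$; since $k\ge 0$ and $\mathcal{E}_{\bar\beta,\beta_0}\ge 0$ the convolution $G=\mathcal{E}_{\bar\beta,\beta_0}*k$ is non-negative, and Young's inequality $\|G\|_{L_1(\R^+)}\le\|\mathcal{E}_{\bar\beta,\beta_0}\|_{L_1(\R^+)}\|k\|_{L_1(\R^+)}$, with $k\in L_1(\R^+)$ by hypothesis, gives $G\in L_1(\R^+)$.
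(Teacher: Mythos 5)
Your proposal is correct in its overall architecture, but it takes a genuinely different --- and considerably more ambitious --- route than the paper's. The paper does not prove (i)--(iii) at all: it cites them from Bazhlekova's work (Theorems 2.3--3.2 and representations (3.4), (3.6) in \cite{B}), and only supplies arguments for (iv) and (v). For (v) you and the paper do exactly the same thing: apply the semigroup identity (iii) with $\theta=1$ and $\theta=1-\beta_j$ and read the asymptotics off (ii). For (iv) your argument is actually cleaner than the paper's: the paper rewrites $G=\mathcal{E}_{\bar{\beta},\beta_0}*k$ as $\frac{d}{dt}\int_0^t(t-\tau)^{\beta_0}E_{(\beta_1,\dots,\beta_m)1+\beta_0}(\cdots)k(\tau)\,d\tau$, expresses $\|G\|_{L_1(\R^{+})}$ as two limits $J_1,J_2$ (at $0$ and at $+\infty$) and estimates them via the asymptotics in (ii); you instead obtain $\|\mathcal{E}_{\bar{\beta},\beta_0}\|_{L_1(\R^{+})}=\lim_{p\to0^+}\widehat{\mathcal{E}}(p)=1/d_1<\infty$ by monotone convergence (the hypothesis $\beta_1=\beta_0$ is exactly what makes this limit finite) and conclude by Young's inequality. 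You also address a point the paper glosses over: that non-negativity persists when $\beta_1=\beta_0$, even though (i) is stated only for $\beta_i<\beta_0$. What your route buys is self-containedness; what it costs is that you must actually reprove the results of \cite{B}, and that is where the remaining burden sits.

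The one genuine gap is (ii) for $t\to+\infty$, which you yourself flag as ``the crux.'' Two concrete issues. First, the Tauberian step is not routine as stated: Karamata's theorem converts $\widehat{\mathcal{E}}(p)\sim p^{\beta_1-\beta_0}/d_1$ (as $p\to0^+$) into an asymptotic for $\int_0^t\mathcal{E}$, and you then need the monotone density theorem (legitimate, since $\mathcal{E}$ is completely monotonic) to differentiate it; moreover, in the case $\beta_0=\beta_1$ Karamata does not apply to $\widehat{\mathcal{E}}$ itself, which tends to the constant $1/d_1$ --- one must pass to the tail $\int_t^{\infty}\mathcal{E}$, whose transform equals $\big(1/d_1-\widehat{\mathcal{E}}(p)\big)/p\sim (d_2/d_1^{2})\,p^{\beta_1-\beta_2-1}$, and only then differentiate. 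None of this is spelled out. Second, carrying that computation through yields the constant $-d_2/\big(d_1^{2}\Gamma(\beta_2-\beta_1)\big)$ --- consistent with the $-(d_2/d_1^{2})p^{\beta_1-\beta_2}$ appearing in your own sketch --- whereas the statement you are proving asserts $-d_2/\big(d_1\Gamma(\beta_2-\beta_1)\big)$. You never reconcile this factor of $d_1$; either your expansion is off or the stated constant is (the Laplace-transform computation supports yours), but as written the proposal establishes, in this case, a different asymptotic from the one claimed, and that must be resolved rather than left implicit. A smaller point: the identity $\widehat{\mathcal{E}}=1/D$ is derived for $\mathrm{Re}\,p$ large, and using it as $p\to0^+$ (both in (ii) and in (iv)) requires an analytic-continuation remark; this is routine, since $D$ has no zeros on $(0,\infty)$ and $\mathcal{E}$ is locally integrable and eventually monotone, but it should be said.
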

\begin{proof}
Statements \verb"(i)-(iii)" are obtained in Theorems 2.3-3.2 in
\cite{B} (see also representations (3.4) and (3.6) therein). Thus,
we are left to examine (iv) and (v).

Coming to statements in \verb"(iv)", we notice that the
non-negativity of $G(t)$ is a simple consequence of \verb"(i)" in
this claim and the non-negativity of $k$. Then employing the second
equality in \verb"(iii)", we rewrite $G(t)$ in more suitable form
\begin{align*}
G(t)&=\int_{0}^{t}\frac{d}{d
(t-\tau)}[(t-\tau)^{\beta_{0}}E_{(\beta_1,...,\beta_{m})1+\beta_{0}}(-d_{1}(t-\tau)^{\beta_1},...,-d_{m}(t-\tau)^{\beta_{m}})]k(\tau)d\tau\\
& =\frac{d}{d
t}\int_{0}^{t}(t-\tau)^{\beta_0}E_{(\beta_1,...,\beta_{m})1+\beta_{0}}(-d_{1}(t-\tau)^{\beta_1},...,-d_{m}(t-\tau)^{\beta_{m}})k(\tau)d\tau.
\end{align*}
Here we utilized the easily verified equality (see the second
asymptotic in \verb"(ii)" in this claim)
\[
\underset{t\to
0}{\lim}\,t^{\beta_{0}}E_{(\beta_1,...,\beta_{m})1+\beta_{0}}(-d_{1}t^{\beta_1},...,-d_{m}t^{\beta_{m}})=0.
\]
Then, bering in mind this representation and the non-negativity of
$G(t)$, we deduce
\[
\|G\|_{L_1(\R^{+})}=J_1+J_2,
\]
where we set
\begin{align*}
J_{1}&=-\underset{t\to
0}{\lim}\int_{0}^{t}\tau^{\beta_{0}}E_{(\beta_1,...,\beta_{m})1+\beta_{0}}(-d_{1}\tau^{\beta_1},...,-d_{m}\tau^{\beta_{m}})k(t-\tau)d\tau,\\
J_{2}&=\underset{t\to
+\infty}{\lim}\int_{0}^{t}\tau^{\beta_{0}}E_{(\beta_1,...,\beta_{m})1+\beta_{0}}(-d_{1}\tau^{\beta_1},...,-d_{m}\tau^{\beta_{m}})k(t-\tau)d\tau.
\end{align*}
Concerning the term $J_1$, we exploit the asymptotic expansion in
\verb"(ii)" and obtain the following estimates for any $0<t<<1$
\begin{align*}
|J_{1}|&\leq
C\int_{0}^{t}\bigg|\frac{\tau^{\beta_0}}{\Gamma(\beta_0+1)}-\sum_{i=1}^{m}\frac{d_{i}\tau^{\beta_{0}+\beta_{i}}}{\Gamma(\beta_{0}+1+\beta_{i})}\bigg|
k(t-\tau)d\tau\\
& \leq
C[t^{1+\beta_0}+\sum_{i=1}^{m}t^{1+\beta_0+\beta_{i}}]\|k\|_{L_{1}(\R^{+})}\underset{t\to
0}{\rightarrow}0
\end{align*}
which means vanishing $J_{1}$.

As for the term $J_2$, we rewrite it in the form
\begin{align*}
J_2&=\underset{t\to
+\infty}{\lim}\int_{\varepsilon}^{t}\tau^{\beta_{0}}E_{(\beta_1,...,\beta_{m})1+\beta_{0}}(-d_{1}\tau^{\beta_1},...,-d_{m}\tau^{\beta_{m}})k(t-\tau)d\tau\\
& + \underset{t\to
+\infty}{\lim}\int_{0}^{\varepsilon}\tau^{\beta_{0}}E_{(\beta_1,...,\beta_{m})1+\beta_{0}}(-d_{1}\tau^{\beta_1},...,-d_{m}\tau^{\beta_{m}})k(t-\tau)d\tau,
\end{align*}
where $\varepsilon$ is a some fixed small positive number,
$\varepsilon\in(0,1)$.

Finally, appealing to the asymptotic in \verb"(ii)", we end up
with
\[
J_{2}\leq
C[\varepsilon^{1+\beta_{0}}+\sum_{i=1}^{m}\varepsilon^{1+\beta_0+\beta_{i}}]\|k\|_{L_{1}(\R^{+})}+C\underset{t\to+\infty}{\lim}\int_{\varepsilon}^{t}k(t-\tau)d\tau
\leq C\|k\|_{L_{1}(\R^{+})}.
\]
In fine, collecting estimates of $J_1$ and $J_2$ ends the
validation of statement \verb"(iv)".

At this point, we verify estimates in \verb"(v)". Concerning the
first asymptotic, we easily conclude that
\[
(\mathcal{E}_{\bar{\beta},\beta_{0}}*1)(t)=I_{t}^{1}\mathcal{E}_{\bar{\beta},\beta_{0}}(t;\bar{d})
= \mathcal{E}_{\bar{\beta},\beta_{0}+1}(t;\bar{d}).
\]
Then,  the asymptotic \verb"(ii)" provides the following behavior
\[
\mathcal{E}_{\bar{\beta},\beta_0+1}(t;\bar{d})\sim
\begin{cases}
\frac{t^{\beta_{0}-\beta_{1}}}{d_1\Gamma(1+\beta_0-\beta_{1})},\qquad\qquad\text{if}\quad
t\to+\infty,\\
\\
\frac{t^{\beta_{0}}}{\Gamma(1+\beta_{0})}-\sum_{i=1}^{m}\frac{d_{i}t^{\beta_{0}+\beta_{i}}}{\Gamma(1+\beta_{0}+\beta_{i})},\quad\text{if}\quad
t\to 0,
\end{cases}
\]
which completes the verification of the first relation in
\verb"(v)".

As for the remaining equality, it is  a simple consequence of the
following relations (which follow from \verb"(ii)")
\[
\mathcal{E}_{\bar{\beta},\beta_{0}}*\omega_{1-\beta_{j}}=I_{t}^{1-\beta_{j}}\mathcal{E}_{\bar{\beta},\beta_{0}}(t;\bar{d})=\mathcal{E}_{\bar{\beta},1+\beta_{0}-\beta_{j}}(t;\bar{d})
\]
 and of the asymptotic in
\verb"(iii)".
 This finishes the proof of this claim.
\end{proof}



\section{Proof of Theorem \ref{t3.1}}
\label{s5}

\noindent Here we provide the detailed proof of this theorem in the
\textbf{DBC} case. The case of \textbf{NBC} is analyzed in the
similar manner and is left to the interested readers. Besides, in
our analysis we focus on the most difficult case, i.e. $\rho$ and
$\rho_{i}$ are not constants. It is apparent that, the estimate
\eqref{3.2*} is a simple consequence of \eqref{3.2}. Indeed,
collecting \eqref{3.2} with the  Sobolev embedding theorem provides
the estimate of the first term in the left-hand side of
\eqref{3.2*}. The second term in the left-hand side in \eqref{3.2*}
is managed with \eqref{3.2} and the Young's inequality for the
convolution.  Thus, we are left to evaluate
$\|u\|_{L_{\infty}([0,+\infty),W^{1,2}(\Omega))}$.

Here, we will exploit the strategy containing  three main steps. In
the first stage, we aim to evaluate  \eqref{3.2} for
$t\in[0,2T^{*}]$ with
\[
T^{*}=\max\{1,T_{0},T_{1},...,T_{M}\}.
\]
To this end, we multiply the equation in \eqref{i.1} by the
function
\[
v=u(x,t)-\frac{\partial^{2} u}{\partial x^{2}}(x,t),
\]
and then integrate over $\Omega$. Performing technical
calculations, we  reduce \eqref{i.1}, \eqref{i.2} to the following
differential inequality in an unknown positive function
$V(t)=\int_{\Omega}\big[u^{2}+\big(\frac{\partial u}{\partial
x}\big)^{2}\big]dx$:
\begin{equation}\label{5.1*}
\frac{\rho(t)}{2}\mathbf{D}_{t}^{\nu}V+\sum_{i=1}^{M}\frac{\rho_{i}(t)}{2}\mathbf{D}_{t}^{\nu_{i}}V-\mathcal{K}_{0}*V+C
V\leq F(t)
\end{equation}
with the positive constant $C$ and the positive  function $F$
depending on $V(0),$ $\omega_{1-\nu_{i}}(t),$ $i=1,...,M;$  the
parameters of the model, and the corresponding norms of the
coefficients. The positive kernel $\mathcal{K}_{0}$ depends only on
$\mathcal{K}$ and the norms of the coefficients of the operator
$\mathcal{L}_{2}$. After that, exploiting \eqref{5.1*} and the
Gronwall-type inequality, we completes this step with bound
\eqref{3.2} for $t\in[0,2T^{*}]$.

In the second step, introducing the special cut-off function
$\xi(t)$ vanishing for $t\geq 2T^{*}$ and performing the change of
time-variable, we transform inequality \eqref{5.1*} to the Cauchy
problem for the  fractional ordinary differential equation
(\textbf{FODE}) with constant coefficients $d_{i}$ with unknown
function $\mathcal{V}=(1-\xi)V$:
\begin{equation}\label{5.1}
\begin{cases}
\mathbf{D}_{t}^{\nu}\mathcal{V}+\sum_{i=1}^{M}d_{i}\mathbf{D}_{t}^{\nu_{i}}\mathcal{V}-\mathcal{K}_{0}*\mathcal{V}+d_{0}\mathcal{V}=\mathcal{F}(t),\,
t>0,\\
\mathcal{V}(0)=0.
\end{cases}
\end{equation}
Finally, using Lemma \ref{lA.1} in this art and
\cite[Theorem 4.1]{LG}, we obtain the estimate
$\|\mathcal{V}\|_{L_{\infty}([0,+\infty))}$, which in turn ensures
\eqref{3.2}  and finishes the proof of this claim.

We notice that, in our arguments (after integrating by parts  in the
equation in \eqref{i.1}) we use the extra regularity of the function
$u,$ namely, $\mathbf{D}_{t}^{\nu}\int_{\Omega}\big(\frac{\partial
u}{\partial x}\big)^{2}dx,$
$\mathbf{D}_{t}^{\nu_{i}}\int_{\Omega}\big(\frac{\partial
u}{\partial x}\big)^{2}dx.$ However,  we do not need in this
smoothness in the final estimates. Thus, our computation is,
partially,  formal but  can be rigorously justified via  the
well-known technique employed in \cite[p.248]{LSU} in the case of
integer derivatives, and in \cite[Lemma 4.2]{KPV2}, \cite[Lemma
5.7]{SV} in the case of the fractional derivatives. We recall that,
this approach is concerned with the exploiting sequence of functions
 having extra regularity in Step 1 (in
our case, using the mollification of $u$) and then, taking
advantage of the standard approximation arguments (see e.g.,
\cite[Section 1]{AF})á passing to the limit in the final estimate.
However, for simplicity considerations, we omit this in the course
of our investigation.

\noindent\textit{Step 1.} Multiplying the equation in \eqref{i.1}
by $v$ and integrating over $\Omega$ arrive at the identity
\begin{equation}\label{5.3}
\sum_{i=1}^{4}\mathcal{U}_{i}=0,
\end{equation}
where we set
\[
\mathcal{U}_{1}=-\int_{\Omega}v\mathcal{L}_{1}udx,\quad
\mathcal{U}_{2}=-\int_{\Omega}v\mathcal{K}*\mathcal{L}_{2}u
dx,\quad \mathcal{U}_{3}=\int_{\Omega}f(u)vdx,\quad
\mathcal{U}_{4}=\int_{\Omega}v\mathbf{D}_{t}u dx.
\]
At this point, we evaluate each term $\mathcal{U}_{j}$, separately.

\noindent$\bullet$ Integrating by parts  and taking into account
the homogenous \textbf{DBC} and  the smoothness of the
coefficients (see h3), we end up with the relation
\[
\mathcal{U}_{1}=\int_{\Omega}[a_1-a_0]\bigg(\frac{\partial
u}{\partial
x}\bigg)^{2}dx-\int_{\Omega}a_0u^{2}dx+\int_{\Omega}a_{1}\bigg(\frac{\partial^{2}
u}{\partial x^{2}}\bigg)^{2}dx+\int_{\Omega}\frac{\partial
a_{1}}{\partial x}\frac{\partial u}{\partial
x}\frac{\partial^{2}u}{\partial
x^{2}}dx-\int_{\Omega}\frac{\partial a_{0}}{\partial
x}u\frac{\partial u}{\partial x}dx.
\]
Clearly, if $\frac{\partial a_{1}}{\partial x}\equiv 0$ and
$\frac{\partial a_0}{\partial x}\equiv 0$, then the estimate of
$\mathcal{U}_{1}$ is finished. To complete this evaluation in the
general case (i.e.  $\frac{\partial a_{1}}{\partial x}\not\equiv
0$ and $\frac{\partial a_0}{\partial x}\not\equiv 0$), we exploit
the Cauchy inequality to manage the  last two terms
\begin{align*}
\mathcal{U}_{1}&\geq
\int_{\Omega}\bigg[a_1-a_0-\varepsilon_1-\varepsilon_{0}^{-1}\bigg\|\frac{\partial
a_{1}}{\partial
x}\bigg\|^{2}_{L_{\infty}([0,+\infty),\C(\bar{\Omega}))}\bigg]\bigg(\frac{\partial
u}{\partial x}\bigg)^{2}dx\\
&+
\int_{\Omega}\bigg[-a_{0}-\varepsilon_{1}^{-1}\bigg\|\frac{\partial
a_{0}}{\partial
x}\bigg\|^{2}_{L_{\infty}([0,+\infty),\C(\bar{\Omega}))}\bigg]u^{2}dx
+
\int_{\Omega}[a_1-\varepsilon_{0}]\bigg(\frac{\partial^{2}u}{\partial
x^{2}}\bigg)^{2}dx.
\end{align*}
Then, bearing in mind assumptions h2 and \eqref{3.1}, and setting
\[
\varepsilon_{0}=\delta^{*},\quad
\varepsilon_{1}=\frac{\delta^{*}(\delta_{1}+\delta_{0})
-\|\frac{\partial a_{1}}{\partial x}
\|^{2}_{L_{\infty}([0,+\infty),\C(\bar{\Omega}))}}{4\delta^{*}},
\]
we end up with the bound
\begin{align*}
\mathcal{U}_{1}&\geq
3\varepsilon_{1}\int_{\Omega}\bigg(\frac{\partial u}{\partial
x}\bigg)^{2}dx+[\delta_{1}-\delta^{*}]\int_{\Omega}\bigg(\frac{\partial^{2}
u}{\partial x^{2}}\bigg)^{2}dx\\
&+ \bigg[ \delta_{0}-\varepsilon_{1}^{-1}\bigg\|\frac{\partial
a_{0}}{\partial
x}\bigg\|^{2}_{L_{\infty}([0,+\infty),\C(\bar{\Omega}))}
\bigg]\int_{\Omega}u^{2}dx.
\end{align*}

\noindent$\bullet$ Taking advantage of the Cauchy inequality, and
appealing to the regularity of $b_{1},b_{2}$ and $\mathcal{K}$
(see assumptions h3, h4), we have
\begin{align*}
\mathcal{U}_{2}&\geq -[\varepsilon_{3}^{-1}+\varepsilon_{4}^{-1}]
\sum_{l=0}^{1}\|b_{l}\|^{2}_{L_{\infty}([0,+\infty),\C(\bar{\Omega}))}
\int_{\Omega}|\mathcal{K}|*\bigg[u^{2}+\bigg(\frac{\partial
u}{\partial x}\bigg)^{2}\bigg]dx\\
& - \varepsilon_{3}
\|\mathcal{K}\|_{L_{1}(\R^{+})}\int_{\Omega}u^{2}dx-
\varepsilon_{4}
\|\mathcal{K}\|_{L_{1}(\R^{+})}\int_{\Omega}\bigg(\frac{\partial^{2}
u}{\partial x^{2}}\bigg)^{2}dx.
\end{align*}
Then, setting
\[
\varepsilon_{3}=\frac{\delta_{0}}{8
\|\mathcal{K}\|_{L_{1}(\R^{+})}}\qquad\text{and}\qquad
\varepsilon_{4}=\frac{\delta_{1}-\delta^{*}}{8
\|\mathcal{K}\|_{L_{1}(\R^{+})}}, \]
 we deduce
\begin{align*}
\mathcal{U}_{2}&\geq
-8\sum_{l=0}^{1}\|b_{l}\|^{2}_{L_{\infty}([0,+\infty),\C(\bar{\Omega}))}
\|\mathcal{K}\|_{L_{1}(\R^{+})}\bigg[\frac{1}{\delta_{0}}+\frac{1}{\delta_{1}-\delta^{*}}\bigg]\int_{\Omega}|\mathcal{K}|*\bigg[u^{2}+\bigg(\frac{\partial
u}{\partial x}\bigg)^{2}\bigg]dx\\
&-
\frac{\delta_{0}}{8}\int_{\Omega}u^{2}dx-\frac{\delta_1-\delta^{*}}{8}\int_{\Omega}\bigg(\frac{\partial^{2}u
}{\partial x^{2}}\bigg)^{2}dx.
\end{align*}

\noindent$\bullet$ Concerning the term $\mathcal{U}_{3}$, we first
rewrite it in more suitable form
\[
\mathcal{U}_{3}=\int_{\Omega}f(u)udx+
\int_{\Omega}[L_{4}u-f(0)]\frac{\partial^{2}u }{\partial x^{2}}dx
-\int_{\Omega}[f(u)-f(0)+L_{4}u]\frac{\partial^{2}u }{\partial
x^{2}}dx.
\]
We notice that this step is omitted in the case of the homogeneous
Neumann boundary condition.

After that, bearing in mind the easily verified equality
\[
[f(u)-f(0)+L_{4}u]=0\quad \text{on}\quad\partial\Omega_{T},
\]
we integrate by parts in the last  integral and then apply
condition h5 to handle the first two integrals, we obtain
\begin{align*}
\mathcal{U}_{3}&\geq
-L_{2}|\Omega|+L_{3}\int_{\Omega}|u|^{1+\gamma}dx+\int_{\Omega}\big(\frac{\partial
u}{\partial
x}\big)^{2}[f'(u)+L_{4}]dx+\int_{\Omega}L_{4}u\frac{\partial^{2}u}{\partial
x^{2}}dx -L_{1}\int_{\Omega}\frac{\partial^{2}u}{\partial x^{2}}dx
\\
&
\geq-L_{2}|\Omega|+L_{3}\int_{\Omega}|u|^{1+\gamma}dx+L_{4}\int_{\Omega}u\frac{\partial^{2}u}{\partial
x^{2}}dx -L_{1}\int_{\Omega}\frac{\partial^{2}u}{\partial
x^{2}}dx.
\end{align*}
Taking into account that $\gamma\geq 1$ (see h5) and utilizing the
Young inequality, we  conclude that
\[
\int_{\Omega}|u|^{2}dx\leq
\frac{2\varepsilon}{1+\gamma}\int_{\Omega}|u|^{1+\gamma}dx+\frac{\gamma-1}{1+\gamma}\varepsilon^{-\frac{2(1+\gamma)}{\gamma-1}}|\Omega|
\]
with a positive $\varepsilon$ being specified below.

Exploiting this inequality, we end up with the estimate
\[
\mathcal{U}_{3}\geq
-L_{2}|\Omega|+L_{3}\frac{1+\gamma}{2\varepsilon}\int_{\Omega}u^{2}dx
-\frac{\gamma-1}{2}\varepsilon^{-\frac{3\gamma+1}{\gamma-1}}L_{3}|\Omega|
+L_{4}\int_{\Omega}u\frac{\partial^{2}u}{\partial x^{2}}dx
-L_{1}\int_{\Omega}\frac{\partial^{2}u}{\partial x^{2}}dx.
\]
Then, applying the Cauchy inequality to manage the last two terms
in the right-hand side arrives at the estimate
\[
\mathcal{U}_{3}\geq
-|\Omega|\bigg[L_{2}+\frac{\gamma-1}{2}\varepsilon^{-\frac{3\gamma+1}{\gamma-1}}L_{3}+\frac{8L_{1}^{2}}{\delta_{1}-\delta^{*}}
\bigg] + \bigg[ \frac{1+\gamma}{2\varepsilon}L_{3}
-\frac{8L_{4}^{2}}{\delta_{1}-\delta^{*}}
\bigg]\int_{\Omega}u^{2}dx -
\frac{\delta_{1}-\delta^{*}}{4}\int_{\Omega}\bigg(\frac{\partial^{2}
u}{\partial x^{2}}\bigg)^{2}dx.
\]
Setting
\[
\varepsilon^{-1}=\frac{2}{L_{3}(1+\gamma)}\bigg[\frac{8L_{4}^{2}}{\delta_{1}-\delta^{*}}+
\frac{1}{\varepsilon_{1}}\bigg\|\frac{\partial a_{0}}{\partial
x}\bigg\|^{2}_{L_{\infty}([0,+\infty),\C(\bar{\Omega}))}
 \bigg]
\]
and
\[
C_{2}=|\Omega|\bigg[L_{2}+L_{3}\varepsilon^{-\frac{3\gamma+1}{\gamma-1}}\frac{\gamma-1}{2}+\frac{8L_{1}^{8}}{\delta_{1}-\delta^{*}}\bigg],
\]
we arrive at
\[
\mathcal{U}_{3}\geq -C_{2}+\varepsilon_{1}^{-1}\bigg\|\frac{\partial
a_{0}}{\partial
x}\bigg\|^{2}_{L_{\infty}([0,+\infty),\C(\bar{\Omega}))}\int_{\Omega}u^{2}dx
-
\frac{\delta_{1}-\delta^{*}}{4}\int_{\Omega}\bigg(\frac{\partial^{2}
u}{\partial x^{2}}\bigg)^{2}dx.
\]

\noindent$\bullet$ It is apparent  that the bound of
 $\mathcal{U}_{4}$ follows from the
 proper estimates of the functions:
\[i_{1}=\int_{\Omega}u\mathbf{D}_{t}^{\theta}(\rho_{\theta}u)dx,\qquad
i_{2}=-\int_{\Omega}\frac{\partial^{2} u}{\partial x^{2}}
\mathbf{D}_{t}^{\theta}(\rho_{\theta}u)dx,
\]
where
\[
\rho_{\theta}=
\begin{cases}
\rho,\qquad\text{if}\quad \theta=\nu, \\
\rho_{i},\qquad\text{if}\quad \theta=\nu_{i},\quad i=1,2,...,M.
\end{cases}
\]
Appealing to the definition of the Caputo derivative and taking
into account \eqref{2.2}, we rewrite $i_{1}$ in more convenient
form
\[
i_{1}=\rho^{-1}_{\theta}(t)\int_{\Omega}\rho_{\theta}(t)u\partial_{t}^{\theta}(\rho_{\theta}u)dx-\omega_{1-\theta}(t)\int_{\Omega}\rho_{\theta}(0)u_{0}udx.
\]
Exploiting Proposition \ref{pA.1} to control the first term in the
right-hand side, we arrive at the inequalities:
\begin{align*}
i_{1}&\geq
\frac{1}{2\rho_{\theta}(t)}\int_{\Omega}\partial_{t}^{\theta}(\rho_{\theta}^{2}u^{2})dx+\frac{\rho_{\theta}(t)\omega_{1-\theta}(t)}{2}\int_{\Omega}u^{2}dx-
\omega_{1-\theta}(t)\rho_{\theta}(0)\int_{\Omega}u_{0}udx\\
& \geq
\frac{1}{2\rho_{\theta}(t)}\int_{\Omega}\partial_{t}^{\theta}(\rho_{\theta}^{2}u^{2})dx+\frac{\rho_{\theta}(0)\omega_{1-\theta}(t)}{2}\int_{\Omega}u^{2}dx
- \omega_{1-\theta}(t)\rho_{\theta}(0)\int_{\Omega}u_{0}udx.
\end{align*}
Here, to obtain the last inequality, we used assumptions h3 and h2
on $\rho_{\theta}$.

Finally, taking advantage of the Cauchy inequality to manage the
last term in the  right-hand side arrives at the bound
\[
i_{1}\geq\frac{1}{2\rho_{\theta}(t)}\int_{\Omega}\partial_{t}^{\theta}(\rho_{\theta}^{2}u^{2})dx
-
\frac{\rho_{\theta}(0)\omega_{1-\theta}(t)}{2}\int_{\Omega}u_{0}^{2}dx.
\]
To calculate the fractional derivative in the first integral, we
employ Proposition \ref{pA.2} and deduce
\[
i_{1}\geq
\frac{\rho_{\theta}(t)}{2}\int_{\Omega}\mathbf{D}_{t}^{\theta}u^{2}dx+
\bigg[\frac{\partial_{t}^{\theta}\rho_{\theta}^{2}}{2\rho_{\theta}(t)}-\frac{\rho_{\theta}(0)\omega_{1-\theta}(t)}{2}\bigg]\int_{\Omega}u_{0}^{2}dx
+
\frac{\theta}{2\rho_{\theta}(t)\Gamma(1-\theta)}\int_{\Omega}\mathcal{J}_{\theta}(t;\rho_{\theta}^{2},u^{2})dx,
\]
where
\[
\mathcal{J}_{\theta}(t;\rho_{\theta}^{2},u^{2})=\int_{0}^{t}\frac{[\rho_{\theta}^{2}(t)-\rho_{\theta}^{2}(s)][u^{2}(s)-u^{2}(0)]}{(t-s)^{1+\theta}}ds.
\]
At last, employing Corollaries \ref{cA.3} and \ref{cA.4} to
evaluate the second and the third term, we obtain
\begin{equation}\label{5.3*}
i_{1}\geq
\frac{\rho_{\theta}(t)}{2}\int_{\Omega}\mathbf{D}_{t}^{\theta}u^{2}dx-\bigg[
\rho_{\theta}(0)\omega_{1-\theta}(t)+T_{\rho_{\theta}}^{1-\theta}\frac{\|\frac{d\rho_{\theta}}{d
t}\|_{L_{\infty}([0,+\infty))}}{\Gamma(2-\theta)}(1+C\theta)
\bigg]\int_{\Omega}u_{0}^{2}dx.
\end{equation}
Working under the homogeneous \textbf{DBC} and integrating
 by parts in $i_{2}$ arrive at the relations
\begin{align*}
i_{2}&=\int_{\Omega}\frac{\partial u}{\partial
x}\partial_{t}^{\theta}\bigg(\rho_{\theta}\frac{\partial
u}{\partial x}-\rho_{\theta}(0)\frac{\partial u_{0}}{\partial
x}\bigg)dx  =
\frac{1}{\rho_{\theta}(t)}\int_{\Omega}\frac{\partial}{\partial
x}\big(\rho_{\theta}u\big)\partial_{t}^{\theta}\frac{\partial}{\partial
x}\big(\rho_{\theta}u\big)dx\\
& -\omega_{1-\theta}(t)\int_{\Omega}\rho_{\theta}(0)\frac{\partial
u_{0}}{\partial x}\frac{\partial u}{\partial x}dx.
\end{align*}
Then, recasting the arguments leading to estimate \eqref{5.3*}, we
deduce that
\begin{equation}\label{5.4}
i_{2}\geq
\frac{\rho_{\theta}(t)}{2}\int_{\Omega}\mathbf{D}_{t}^{\theta}\bigg(\frac{\partial
u}{\partial x}\bigg)^{2}dx -\bigg[
\rho_{\theta}(0)\omega_{1-\theta}(t)+T_{\rho_{\theta}}^{1-\theta}(C\theta+1)\frac{\|\rho'_{\theta}\|_{L_{\infty}([0,+\infty))}}{\Gamma(2-\theta)}
\bigg]\int_{\Omega}\bigg(\frac{\partial u_{0}}{\partial
x}\bigg)^{2}dx.
\end{equation}
Finally, using estimates \eqref{5.3*} and \eqref{5.4} and performing
technical calculations, we arrive at
\begin{align*}
\mathcal{U}_{4}&\geq
\frac{1}{2}\bigg\{\rho(t)\mathbf{D}_{t}^{\nu}\int_{\Omega}\bigg[u^{2}+\bigg(\frac{\partial
u}{\partial
x}\bigg)^{2}\bigg]dx+\sum_{i=1}^{M}\rho_{i}(t)\mathbf{D}_{t}^{\nu_{i}}\int_{\Omega}\bigg[u^{2}+\bigg(\frac{\partial
u}{\partial x}\bigg)^{2}\bigg]dx\bigg\}\\
&
-\bigg\{\rho(0)\omega_{1-\nu}(t)+\sum_{i=1}^{M}\rho_{i}(0)\omega_{1-\nu_{i}}(t)+T_{0}^{1-\nu}(C\nu+1)\frac{\|\rho'\|_{L_{\infty}([0,+\infty)}}{\Gamma(2-\nu)}
\\&
+ \sum_{i=1}^{M}
T_{i}^{1-\nu_{i}}(C\nu_{i}+1)\frac{\|\rho_{i}'\|_{L_{\infty}([0,+\infty))}}{\Gamma(2-\nu_{i})}
 \bigg\}\|u_{0}\|_{W^{1,2}(\Omega)}^{2}.
\end{align*}
In fine, setting
\[
C_{3}=(T^{*})^{1-\nu_{1}}\frac{C\nu+1}{\Gamma(2-\nu)}\bigg[
\|\rho'\|_{L_{\infty}([0,+\infty))}
+\sum_{i=1}^{M}\|\rho_{i}'\|_{L_{\infty}([0,+\infty))}\bigg],
\]
we end up with the bound
\begin{align*}
\mathcal{U}_{4}&\geq \frac{1}{2}\bigg\{
\rho(t)\mathbf{D}_{t}^{\nu}\int_{\Omega}\bigg[u^{2}+\bigg(\frac{\partial
u}{\partial x}\bigg)^{2}\bigg]dx +
\sum_{i=1}^{M}\rho_{i}(t)\mathbf{D}_{t}^{\nu_{i}}\int_{\Omega}\bigg[u^{2}+\bigg(\frac{\partial
u}{\partial x}\bigg)^{2}\bigg]dx  \bigg\}\\
& -\bigg[
C_{3}+\rho(0)\omega_{1-\nu}(t)+\sum_{i=1}^{M}\rho_{i}(0)\omega_{1-\nu_{i}}(t)
\bigg]\|u_{0}\|^{2}_{W^{1,2}(\Omega)}.
\end{align*}
Now, bearing in mind \eqref{5.3} and collecting all estimates of $\mathcal{U}_{i}$, we get
\begin{align*}
&3\varepsilon_{1}\int_{\Omega}\bigg(\frac{\partial u}{\partial
x}\bigg)^{2}dx+\frac{5}{8}[\delta_{1}-\delta^{*}]\int_{\Omega}\bigg(\frac{\partial^{2}u}{\partial
x^{2}}\bigg)^{2}dx+\frac{7\delta_{0}}{8}\int_{\Omega}u^{2}dx\\
&
-8\sum_{i=0}^{1}\|b_i\|^{2}_{L_{\infty}([0,+\infty),\C(\bar{\Omega}))}\|\mathcal{K}\|_{L_{1}(\R^{+})}[(\delta_1-\delta^{*})^{-1}+\delta_{0}^{-1}]
\int_{\Omega}|\mathcal{K}|*\bigg[u^{2}+\bigg(\frac{\partial
u}{\partial x}\bigg)^{2}\bigg]dx\\
& +
\frac{\rho(t)}{2}\mathbf{D}_{t}^{\nu}\int_{\Omega}\bigg[u^{2}+\bigg(\frac{\partial
u}{\partial x}\bigg)^{2}\bigg]dx
+\sum_{i=1}^{M}\frac{\rho_{i}(t)}{2}\mathbf{D}_{t}^{\nu_{i}}\int_{\Omega}\bigg[u^{2}+\bigg(\frac{\partial
u}{\partial x}\bigg)^{2}\bigg]dx\\
& \leq
C_{2}+\|u_{0}\|^{2}_{W^{1,2}(\Omega)}\bigg[\rho(0)\omega_{1-\nu}(t)+\sum_{i=1}^{M}\rho_{i}(0)\omega_{1-\nu_{i}}(t)+C_{3}\bigg].
\end{align*}
At last, denoting
\begin{align}\label{5.7*}\notag
C_{4}&=\min\{3\varepsilon_{1},7\delta_{0}/8\},\\\notag
\mathcal{K}_{0}(t)&=8\sum_{i=0}^{1}\|b_i\|^{2}_{L_{\infty}([0,+\infty),\C(\bar{\Omega}))}\|\mathcal{K}\|_{L_{1}(\R^{+})}[(\delta_1-\delta^{*})^{-1}+\delta_{0}^{-1}]
|\mathcal{K}(t)|,\\
F(t)&=C_{2}+\|u_{0}\|^{2}_{W^{1,2}(\Omega)}\bigg[\rho(0)\omega_{1-\nu}(t)+\sum_{i=1}^{M}\rho_{i}(0)\omega_{1-\nu_{i}}(t)+C_{3}\bigg].
\end{align}
we reach to the estimate
\begin{equation}\label{5.7}
\frac{\rho(t)}{2}\mathbf{D}_{t}^{\nu}V
+\sum_{i=1}^{M}\frac{\rho_{i}(t)}{2}\mathbf{D}_{t}^{\nu_{i}}V-\mathcal{K}_{0}*V+C_{4}V\leq
F(t)
\end{equation}
with the non-negative function
\[
V(t)=\int_{\Omega}\bigg[u^{2}+\bigg(\frac{\partial u}{\partial
x}\bigg)^{2}\bigg]dx\qquad\text{such that}\quad
V(0)=\int_{\Omega}\bigg[u_{0}^{2}+\bigg(\frac{\partial
u_{0}}{\partial x}\bigg)^{2}\bigg]dx.
\]
Finally, to obtain estimate \eqref{3.2} for $t\in[0,2T^{*}]$, we
compute the fractional integral $I_{t}^{\nu}$ in the both sides of
\eqref{5.7}. Combining standard technical calculations with
Proposition \ref{pA.2}, we arrive at the estimate
\[
\rho(t)
V(t)+\sum_{i=1}^{M}I^{\nu-\nu_{i}}_{t}(\rho_{i}V)(t)+C_{4}I^{\nu}_{t}V\leq
\sum_{i=1}^{3}J_{i},
\]
where we set
\begin{align*}
J_{1}&=2I_{t}^{\nu}F(t)+V(0)[\rho(t)+\sum_{i=1}^{M}I_{t}^{\nu-\nu_{i}}\rho_{i}(t)],\quad
J_{2}=2I_{t}^{\nu}(\mathcal{K}_{0}*V)(t),\\
J_{3}&=\nu
I^{1}_{t}(\mathcal{W}(\rho)V)(t)+\sum_{i=1}^{M}\nu_{i}I^{1+\nu-\nu_{i}}(\mathcal{W}(\rho_{i})V)(t).
\end{align*}
Then, taking into account the  non-negativity of
$I_{t}^{\nu-\nu_{i}}(\rho_{i}V)$, $I_{t}^{\nu}V$ and assumption h2
on $\rho(t)$, we end up with the bound
\begin{equation}\label{5.9}
V\leq \frac{1}{\delta}\sum_{i=1}^{3}J_{i}.
\end{equation}
At this point, we evaluate each term $J_{i},$ separately.

\noindent$\bullet$ Performing the straightforward calculations and
bearing in mind of Proposition \ref{pA.0}, assumption h3 and
definition of the function $F$, we deduce
\[
J_{1}\leq
C[1+V(0)][1+T^{*\nu}+T^{*}+\sum_{i=1}^{M}(T^{*\nu-\nu_{i}}+T^{*\nu-\nu_{i}+1})]
\]
with the positive constant $C$ depending only on $C_{2},C_{3},$
$\nu$ and $\nu_{i}$, and the norms of $\rho,\rho_{i}$.

\noindent$\bullet$ Employing (ii) in Proposition \ref{pA.0} and
properties of the kernel $\mathcal{K}_{0}$, we easily conclude
that
\[
0<J_{2}\leq 2(I_{t}^{\nu}\mathcal{K}_{0}*V)(t),
\]
and
\[
\|I_{t}^{\nu}\mathcal{K}_{0}\|_{L_{1}(0,2T^{*})}\leq
\frac{CT^{*\nu}}{\Gamma(1+\nu)}\|\mathcal{K}\|_{L_{1}(\R^{+})},
\]
where $C$ depending only on the norms of $b_{0},b_{1},\mathcal{K}$
and being independent of $T^{*}$.

\noindent$\bullet$ Performing technical calculations arrives at
\[
J_{3}\leq C
I_{t}^{1-\nu}(V[\mathcal{W}(\rho)T^{*\nu}+\sum_{i=1}^{M}\mathcal{W}(\rho_{i})T^{*2\nu-\nu_{i}}])(t)
\]
with $C$ depending only on $\nu$ and $\nu_{i}$ and being
independent of $t$ and $T^{*}$.

Collecting all estimates of $J_{i}$ with \eqref{5.9} provides the
inequality
\begin{align*}
V(t)&\leq
\frac{C}{\delta}[1+V(0)][1+T^{*\nu}+T^{*}+\sum_{i=1}^{M}(T^{*\nu-\nu_{i}}+T^{*\nu-\nu_{i}+1})]\\
&+\frac{2}{\delta}(I_{t}^{\nu}\mathcal{K}_{0})*V+
\frac{C}{\delta}I_{t}^{1-\nu}(V[\mathcal{W}(\rho)T^{*\nu}+\sum_{i=1}^{M}\mathcal{W}(\rho_{i})T^{*2\nu-\nu_{i}}])(t).
\end{align*}
In fine,  bearing in mind Proposition \ref{pA.2} and employing
Gronwall-type inequality in Proposition \ref{pA.0} with
\begin{align*}
k_{1}&=\mathcal{W}(\rho)T^{*\nu}+\sum_{i=1}^{M}\mathcal{W}(\rho_{i})T^{*2\nu-\nu_{i}},\quad
\theta=1-\nu,\quad k_{2}=I_{t}^{\nu}\mathcal{K}_{0},\\
\mathfrak{C}_{1}&=\mathfrak{C}_{2}=\frac{C}{\delta},\quad
\mathfrak{C}_{0}=\frac{C}{\delta}[1+V(0)][1+T^{*\nu}+T^{*}+\sum_{i=1}^{M}(T^{*\nu-\nu_{i}}+T^{*\nu-\nu_{i}+1})],
\end{align*}
we end up with the desired estimate \eqref{3.2} for all
$t\in[0,2T^{*}]$. Hence, the Step 1 is completed.

\noindent\textit{Step 2}. As we wrote above, to claim estimate
\eqref{3.2} for all $t\geq 0$, we first reduce inequality
\eqref{5.7} to the initial-value problem \eqref{5.1}.

To this end,  introducing  the new  functions $\xi$, $\mathcal{U}$
and  the operator $\mathfrak{S}$:
 \begin{align*}
\xi&=\xi(t)\in\C_{0}^{\infty}(\overline{R^{+}}),\quad
\xi\in[0,1],\quad \xi=\begin{cases}1\qquad 0\leq t\leq T^{*},\\
0,\quad\quad t\geq 2T^{*}, \end{cases}\\
 \mathcal{U}&=\xi(t) V(t),
\\
\mathfrak{S} &=
\frac{\rho(t)}{2}\mathbf{D}_{t}^{\nu}+\sum_{i=1}^{M}\frac{\rho_{i}(t)}{2}\mathbf{D}_{t}^{\nu_{i}}-\mathcal{K}_{0}*
+C_{4},
\end{align*}
we establish the following result.
\begin{proposition}\label{p5.1}
The function $\mathcal{U}$ boils down to $V$ for $t\in[0,T^{*}]$
and satisfies estimates \eqref{3.2} for all $t\geq 0$, besides
\begin{equation}\label{5.10}
|\mathfrak{S}\mathcal{U}|\leq \xi(t)F(t)+C[1+V(0)]\leq
F(t)+C[1+V(0)]
\end{equation}
with the positive constant $C$ depending only on
$T^{*},\nu,\nu_{i}$ and the corresponding norms of
$\rho,\rho_{i},\mathcal{K}_{0}$ and being independent of $t$.
\end{proposition}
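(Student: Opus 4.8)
The plan is to dispatch the three assertions in turn, the operator bound being the only substantial one. The first is immediate: on $[0,T^{*}]$ one has $\xi\equiv1$, hence $\mathcal{U}=\xi V=V$ there. For the second, since $0\le\xi\le1$ and $\xi$ is supported in $[0,2T^{*}]$, the function $\mathcal{U}=\xi V$ vanishes for $t\ge2T^{*}$ and obeys $0\le\mathcal{U}(t)\le V(t)$ on $[0,2T^{*}]$; as Step~1 already controls $V$ on $[0,2T^{*}]$ by the right-hand side of \eqref{3.2}, the same bound passes to $\mathcal{U}$ on all of $[0,+\infty)$.

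For the estimate on $\mathfrak{S}\mathcal{U}$ I would first expand each fractional derivative of the product $\xi V$ through the Leibniz-type formula of Proposition~\ref{pA.2}(i), taken with $w_{1}=\xi\in\C^{1}$ and $w_{2}=V$, so that
\[
\mathbf{D}_{t}^{\theta}(\xi V)=\xi\,\mathbf{D}_{t}^{\theta}V+V(0)\,\mathbf{D}_{t}^{\theta}\xi+\frac{\theta}{\Gamma(1-\theta)}\,\mathcal{J}_{\theta}(t;\xi,V),\qquad\theta\in\{\nu,\nu_{1},\dots,\nu_{M}\}.
\]
Summing against the coefficients $\rho/2$ and $\rho_{i}/2$, and writing the memory term as $-\mathcal{K}_{0}*(\xi V)=-\xi(\mathcal{K}_{0}*V)+[\xi(\mathcal{K}_{0}*V)-\mathcal{K}_{0}*(\xi V)]$, I arrive at
\[
\mathfrak{S}\mathcal{U}=\xi\,\mathfrak{S}V+R,
\]
where $R$ gathers the lower-order remainders $V(0)[\frac{\rho}{2}\mathbf{D}_{t}^{\nu}\xi+\sum_{i}\frac{\rho_{i}}{2}\mathbf{D}_{t}^{\nu_{i}}\xi]$, the two $\mathcal{J}_{\theta}$-contributions, and the commutator $\xi(\mathcal{K}_{0}*V)-\mathcal{K}_{0}*(\xi V)=\int_{0}^{t}\mathcal{K}_{0}(t-s)[\xi(t)-\xi(s)]V(s)\,ds$. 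The leading term is sign-definite: since $\xi\ge0$ and $\mathfrak{S}V\le F$ by \eqref{5.7}, one has $\xi\,\mathfrak{S}V\le\xi F$, which yields the upper half of the claim once $|R|\le C[1+V(0)]$ is in hand.

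The technical core is thus the uniform-in-$t$ bound $|R|\le C[1+V(0)]$, and the one point that requires care is that it must hold on the whole half-line, not merely on a compact interval. This is exactly what the stabilization of $\xi$ supplies. Writing the Caputo derivative of $\xi\in\C^{1}$ as $\mathbf{D}_{t}^{\theta}\xi(t)=\frac{1}{\Gamma(1-\theta)}\int_{0}^{t}(t-s)^{-\theta}\xi'(s)\,ds$ and using that $\xi'$ is bounded and supported in $[T^{*},2T^{*}]$, one sees $|\mathbf{D}_{t}^{\theta}\xi|\le C$ for all $t\ge0$ (indeed it decays as $t\to+\infty$), so the first group in $R$ is bounded by $C\,V(0)$. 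In $\mathcal{J}_{\theta}(t;\xi,V)$ the factor $\xi(t)-\xi(s)$ vanishes unless $s$ or $t$ lies in the transition window, and combining $|\xi(t)-\xi(s)|\le\|\xi'\|_{\infty}|t-s|$ with the Step~1 bound on $V$ over $[0,2T^{*}]$ gives a bound $C[1+V(0)]$ uniform in $t$, by estimates of the type carried out in Corollaries~\ref{cA.3}--\ref{cA.4}. The commutator is handled likewise: for $t\le2T^{*}$ it is dominated by $\|\mathcal{K}_{0}\|_{L_{1}}\sup_{[0,2T^{*}]}V$, while for $t>2T^{*}$ the support of $\xi(t)-\xi(s)$ forces $s\in[0,2T^{*}]$ and $\mathcal{K}_{0}(t-s)$ sits in the integrable tail of $\mathcal{K}_{0}$, again giving $C[1+V(0)]$.

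The remaining and, I expect, principal difficulty is the two-sided nature of the claim, i.e. the lower bound on $\mathfrak{S}\mathcal{U}=\xi\,\mathfrak{S}V+R$, for which \eqref{5.7} is of no direct use. Here the precise form of the weight $\xi F$ on the right is essential: splitting $\mathbf{D}_{t}^{\theta}V=\partial_{t}^{\theta}V-\omega_{1-\theta}(t)V(0)$ via \eqref{2.2}, the genuinely singular part $-\xi[\frac{\rho}{2}\omega_{1-\nu}+\sum_{i}\frac{\rho_{i}}{2}\omega_{1-\nu_{i}}]V(0)$ of $\xi\,\mathfrak{S}V$ is matched, term by term, by the $V(0)[\rho(0)\omega_{1-\nu}+\sum_{i}\rho_{i}(0)\omega_{1-\nu_{i}}]$ contribution already built into $F$ (this is why a constant on the right would not suffice and $\xi F$ is forced). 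What is left is the regular piece $\xi[\frac{\rho}{2}\partial_{t}^{\nu}V+\sum_{i}\frac{\rho_{i}}{2}\partial_{t}^{\nu_{i}}V]$ supported on $[0,2T^{*}]$, whose control is not implied by the mere boundedness of $V$; this is precisely the place where the extra regularity obtained through the mollification procedure announced at the start of the section enters, after which the remainder estimates above close the lower bound and produce the constant $C=C(T^{*},\nu,\nu_{i},\dots)$ independent of $t$.
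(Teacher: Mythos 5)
Your first two claims and the core decomposition are, in substance, exactly the paper's own argument: the paper also writes $\mathfrak{S}\mathcal{U}=\xi\,\mathfrak{S}V+V_{1}+V_{2}+V_{3}$ via Proposition \ref{pA.2}(i), where $V_{1}$ collects the $V(0)\mathbf{D}_{t}^{\theta}\xi$ terms, $V_{2}$ the $\mathcal{J}_{\theta}(t;\xi,V)$ terms, and $V_{3}$ the convolution commutator, and it bounds each by $C[1+V(0)]$ through precisely your three mechanisms: boundedness of $\mathbf{D}_{t}^{\theta}\xi$ coming from the support of $\xi'$, the Lipschitz bound on $\xi$ combined with the Step~1 estimate of $V$ on $[0,2T^{*}]$, and the case splitting $t\le 2T^{*}$ versus $t>2T^{*}$ for the commutator (where $\xi(t)=0$ confines everything to $[0,2T^{*}]$). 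Up to this point your proposal is correct and identical to the paper's proof.

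The gap is in your final paragraph. Matching the singular parts $-\xi\tfrac{\rho}{2}\omega_{1-\nu}(t)V(0)$, etc., against the $\omega$-terms built into $F$ is fine, but the closing appeal to ``the extra regularity obtained through the mollification procedure'' is not a proof. That regularization only legitimizes the formal integrations by parts for smooth approximants; any estimate extracted from it must be uniform in the approximation parameter to survive the passage to the limit, and mere boundedness of $V$ on $[0,2T^{*}]$ gives no uniform pointwise control of $\partial_{t}^{\nu}V$ or $\partial_{t}^{\nu_{i}}V$ from below (small bounded oscillations produce arbitrarily large fractional derivatives). So the lower half of \eqref{5.10} is not established by your argument. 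Two remarks put this in context. First, the paper's own proof shares this feature: it derives only $\xi\,\mathfrak{S}V\le\xi F$ together with $|V_{j}|\le C[1+V(0)]$, i.e.\ the upper half of \eqref{5.10}; the absolute value in the statement is an overstatement of what is proved. Second, the lower bound is never actually needed: what the sequel uses is the inequality for $\mathcal{V}=(1-\xi)V$, and the same decomposition yields
\[
\mathfrak{S}\mathcal{V}=(1-\xi)\,\mathfrak{S}V-(V_{1}+V_{2}+V_{3})\le (1-\xi)F(t)+C[1+V(0)],
\]
using only \eqref{5.7}, the sign condition $1-\xi\ge 0$, and the remainder bounds. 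Had you observed that the provable (and sufficient) content of \eqref{5.10} is the remainder estimate $|\mathfrak{S}\mathcal{U}-\xi\,\mathfrak{S}V|\le C[1+V(0)]$, your proof would be complete; as written, its last step rests on an estimate that is not available.
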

\begin{proof}
The first statement of this claim follows immediately from the
definition of $\mathcal{U}$ and properties of $\xi(t)$. Thus, we
are left to examine \eqref{5.10}. To this end, we rewrite the
left-hand side of \eqref{5.10} in the form
\begin{equation}\label{5.11}
\mathfrak{S}\mathcal{U}=\xi(t)\mathfrak{S}V+\sum_{j=1}^{3}V_{j},
\end{equation}
where we set
\begin{align*}
V_{1}&=V(0)[\rho(t)\mathbf{D}_{t}^{\nu}\xi+\sum_{i=1}^{M}\rho_{i}\mathbf{D}_{t}^{\nu_{i}}\xi],\\
V_{2}&=\frac{\rho(t)\nu}{\Gamma(1-\nu)}\mathcal{J}_{\nu}(t;\xi,V)+\sum_{i=1}^{M}
\frac{\nu_{i}\rho_{i}(t)}{\Gamma(1-\nu_{i})}\mathcal{J}_{\nu_{i}}(t;\xi,V),\\
V_{3}&=\xi(\mathcal{K}_{0}*V)-(\mathcal{K}_{0}*\xi V).
\end{align*}
We notice that the terms $V_{1}$ and $V_{2}$ appears in
\eqref{5.11} thanks to Proposition \ref{pA.2}.

Next, we estimate each $V_{i}$ and $\xi\mathfrak{S}V$, separately.
Obviously, keeping in mind \eqref{5.7} and the definition of
$\xi$, we arrive at the inequality
\[
\xi\mathfrak{S}V\leq \xi F(t)\leq F(t)
\]
for any $t\geq 0$.

Then, appealing to the definition of $\xi(t)$ and performing
technical calculations, we have
\begin{align*}
\mathbf{D}_{t}^{\theta}\xi&=\frac{1}{\Gamma(1-\theta)}\int_{0}^{t}\frac{\xi'(\tau)}{(t-\tau)^{\theta}}d\tau=\frac{1}{\Gamma(1-\theta)}\begin{cases}
\int_{0}^{t}\frac{\xi'(\tau)}{(t-\tau)^{\theta}}d\tau,\qquad\quad\text{if}\qquad
t\in[0,2T^{*}],\\
\\
\int_{0}^{2T^{*}}\frac{\xi'(\tau)}{(t-\tau)^{\theta}}d\tau,\qquad\text{if}\qquad
t>2T^{*}
\end{cases}
\\
& \leq CT^{*1-\theta}\|\xi'\|_{\C(\overline{\R^{+}})}
\end{align*}
for any $\theta\in(0,1)$.

By means  of this inequality and h3, we deduce the bound
\[
|V_{1}|\leq
CV(0)\|\xi'\|_{\C(\overline{\R^{+}})}[T^{*1-\nu}+\sum_{i=1}^{M}T^{*1-\nu_{i}}][\|\rho\|_{L_{\infty}([0,+\infty))}
+\sum_{i=1}^{M}\|\rho_{i}\|_{L_{\infty}([0,+\infty))}],
\]
where the positive $C$ depends only on $\nu$ and $\nu_{i}$.

Coming to the term $V_{2}$, for any $\theta\in(0,1)$, we have the
easily verified relations:
\begin{align*}
\frac{\theta}{\Gamma(1-\theta)}\mathcal{J}_{\theta}(t;\xi,V)&=\frac{\theta}{\Gamma(1-\theta)}
\begin{cases}
\int_{0}^{t}\frac{[\xi(t)-\xi(s)][V(s)-V(0)]}{(t-s)^{1+\theta}}ds,\qquad\text{if}\quad
t\in[0,2T^{*}],\\
\\
\int_{0}^{2T^{*}}\frac{[\xi(t)-\xi(s)][V(s)-V(0)]}{(t-s)^{1+\theta}}ds,\qquad\text{if}\quad
t\geq 2T^{*},
\end{cases}
\\
\int_{0}^{t}\frac{[\xi(t)-\xi(s)][V(s)-V(0)]}{(t-s)^{1+\theta}}ds
&\leq
C\|\xi'\|_{\C(\overline{\R^{+}})}T^{*1-\theta}\big[\underset{t\in[0,2T^{*}]}{\sup}V(s)+V(0)\big],\quad\text{if}\quad
 t\in[0,2T^{*}],\\
\int_{0}^{2T^{*}}\frac{[\xi(t)-\xi(s)][V(s)-V(0)]}{(t-s)^{1+\theta}}ds
&\leq
C\|\xi'\|_{\C(\overline{\R^{+}})}\big[\underset{t\in[0,2T^{*}]}{\sup}V(s)+V(0)\big]\int_{0}^{2T^{*}}(2T^{*}-s)^{-\theta}ds\\
& \leq
CT^{*1-\theta}\|\xi'\|_{\C(\overline{\R^{+}})}[\underset{t\in[0,2T^{*}]}{\sup}V(s)+V(0)],
\quad\text{if}\quad
 t\geq 2T^{*}.
\end{align*}
Collecting these inequalities with  estimate \eqref{3.2} for
$t\in[0,2T^{*}]$, we conclude that  the bound holds
\[
|V_{2}|\leq
C[T^{*1-\nu}\|\rho\|_{L_{\infty}(\overline{\R^{+}})}+\sum_{i=1}^{M}T^{*1-\nu_{i}}\|\rho_{i}\|_{L_{\infty}(\overline{\R^{+}})}]\|\xi'\|_{\C(\overline{\R^{+}})}[1+V(0)]
\]
for any $t\geq 0$, where the constant $C$ is independent of $t$ and
$T^{*}$.

Concerning $V_{3}$, the properties of $\xi(t)$ provide the
representation
\[
V_{3}=
\begin{cases}
0,\qquad\qquad\qquad t\in[0,T^{*}],\\
\xi(t)(\mathcal{K}_{0}*V)(t)-(\mathcal{K}_{0}*\xi V)(t),\quad
t\in[T^{*},2T^{*}],\\
-\int_{0}^{2T^{*}}\mathcal{K}_{0}(t-\tau)\xi(\tau)V(\tau)d\tau,\quad
t>2T^{*}.
\end{cases}
\]
After that, appealing to the smoothness of $\mathcal{K}_{0}$ and
$\xi$ and using the estimate \eqref{3.2} for $t\in[0,2T^{*}]$, we
deduce
\[
|V_{3}|\leq
C\|\mathcal{K}_{0}\|_{L_{1}(0,+\infty)}[1+V(0)]\quad\text{for
any}\quad t\geq 0.
\]

In fine, collecting all estimates for $V_{i}$ and $\mathfrak{S}V$,
we end up with the desired bound, which completes the proof of
this claim.
\end{proof}
Now we introduce  new unknown function
\begin{equation}\label{5.12}
\mathcal{V}=V-\mathcal{U},
\end{equation}
which satisfies the relations
\[
\begin{cases}
\frac{\rho(t)}{2}\mathbf{D}_{t}^{\nu}\mathcal{V}+\sum_{i=1}^{M}\frac{\rho_{i}(t)}{2}\mathbf{D}_{t}^{\nu_{i}}\mathcal{V}-
\mathcal{K}_{0}*\mathcal{V}+C_{4}\mathcal{V}\leq
2F(t)+C[1+V(0)],\quad t\geq 0,\\
\mathcal{V}=0,\quad t\in[0,T^{*}].
\end{cases}
\]
Here we employed Proposition \ref{p5.1} and estimate \eqref{5.7}.

 Since the right-hand side in the inequality is strictly
positive continuous function for $t>0,$ we argue similar to
\cite[Theorem 1]{WX} and conclude that $\mathcal{V}$ solves the
Cauchy problem
\[
\begin{cases}
\frac{\rho(t)}{2}\mathbf{D}_{t}^{\nu}\mathcal{V}+\sum_{i=1}^{M}\frac{\rho_{i}(t)}{2}\mathbf{D}_{t}^{\nu_{i}}\mathcal{V}-\mathcal{K}_{0}*\mathcal{V}
+C_{4}\mathcal{V}=2F(t)+C[1+V(0)]+F_{1}(t),\quad t>0,\\
\mathcal{V}=0,\quad t\in[0,T^{*}],
\end{cases}
\]
where $F_{1}(t)$ is a nonnegative function.

 Obviously,
\[
F_1(t)=-2F(t)-C[1+V(0)]\quad\text{for}\quad t\in[0,T^{*}].
\]
    Finally, performing the change of the time-variable in the
    Cauchy
    problem
    \[
\sigma=t-T^{*},
    \]
and denoting
\[
\bar{\mathcal{V}}=\mathcal{V}(\sigma+T^{*}),
\]
we recast arguments of Section 6.5 in \cite{SV} and deduce
\begin{equation}\label{5.13}
\begin{cases}
\mathbf{D}_{\sigma}^{\nu}\bar{\mathcal{V}}+\sum_{i=1}^{M}d_{i}\mathbf{D}_{\sigma}^{\nu_{i}}\bar{\mathcal{V}}
-\frac{\mathcal{K}_{0}}{\rho(\sigma+T^{*})}*\bar{\mathcal{V}}+d_{0}\bar{\mathcal{V}}=F_{0}(\sigma)+\bar{F}_{1}(\sigma),\quad
\sigma>0,\\
\bar{\mathcal{V}}(0)=0,
\end{cases}
\end{equation}
where we set
\begin{align*}
d_{i}&=\frac{\rho_{i}(\sigma+T^{*})}{\rho(\sigma+T^{*})},\quad
i=1,2,..,M,\quad d_{0}=\frac{C_{4}}{\rho(\sigma+T^{*})},\\
F_{0}(\sigma)&=\frac{2F(\sigma+T^{*})}{\rho(\sigma+T^{*})}+\frac{C[1+V(0)]}{\rho(\sigma+T^{*})},\quad
\bar{F}_{1}(\sigma)=\frac{F_{1}(\sigma+T^{*})}{\rho(\sigma+T^{*})}.
\end{align*}
It is apparent (see h3 and the definition of $T^{*}$) that
$\rho(\sigma+T^{*})$ and $\rho_{i}(\sigma+T^{*}),$ $i=1,...,M,$
are positive constants for any $\sigma\geq 0$.

Thus, the initial-value problem \eqref{5.13} to \textbf{FODE} with
constant coefficients completes the \textit{Step 2}.

\noindent\textit{Step 3}. Here, we first aim to obtain  the
following bound of the solution to \eqref{5.13} for or all
$\sigma\geq 0$,
\begin{equation}\label{5.14}
\bar{\mathcal{V}}(\sigma)\leq
C[1+V(0)]+(G_{1}*\bar{\mathcal{V}})(\sigma)
\end{equation}
with the positive constant $C$ being independent of $\sigma$
 and the certain positive kernel  $G_{1}\in L_{1}(0,+\infty)$
 defined via $\mathcal{K}_{0}$.
 Then, keeping in mind the definition of the function
 $\bar{\mathcal{V}}$ and the estimate of the function $\mathcal{U}$, we will reduce
 \eqref{5.14} to the estimate
 \[
0\leq V(t)\leq C[1+V(0)]+G_{1}*|\mathcal{U}|+G_{1}*V.
 \]
In fine, exploiting the Gronwall-type inequality \cite[Theorem
15]{Dr} here, we reach
 the desired bound
 \[
0\leq V(t)\leq C[1+V(0)]\quad\text{for}\quad t\geq 0,
 \]
which completes the proof of Theorem  \ref{t3.1}.

Thus, we are left to verify \eqref{5.14}. To this end, applying
\cite[Theorem 4.1]{LG} to \eqref{5.13}, we have
\[
\bar{\mathcal{V}}=\int_{0}^{t}\mathcal{E}_{\bar{\beta},\beta_{0}}(t-\tau;\bar{d})F_{0}(\tau)d\tau
+\int_{0}^{t}\mathcal{E}_{\bar{\beta},\beta_{0}}(t-\tau;\bar{d})\bar{F}_{1}(\tau)d\tau
+
\int_{0}^{t}\mathcal{E}_{\bar{\beta},\beta_{0}}(t-\tau;\bar{d})\bigg(\frac{\mathcal{K}_{0}}{\rho^{*}}*\bar{\mathcal{V}}\bigg)(\tau)d\tau
\equiv \sum_{j=1}^{3}l_{j}
\]
where $\mathcal{E}_{\bar{\beta},\beta_{0}}(t;\bar{d})$ is defined
in \eqref{4.1} with
\[
\beta_{0}=\nu,\quad
\bar{\beta}=(\nu,\nu-\nu_1,...,\nu-\nu_{M}),\quad
\bar{d}=\bigg\{\frac{C_{4}}{\rho^{*}},\frac{\rho^{*}_{1}}{\rho^{*}},...,\frac{\rho^{*}_{M}}{\rho^{*}}\bigg\},\quad
\rho_{i}^{*}=\rho_{i}(\sigma+T^{*}),\quad
\rho^{*}=\rho(\sigma+T^{*}).
\]
It is worth noting that $\rho^{*}$ and $\rho_{i}^{*}$ are positive
constants for all $\sigma\geq 0$.

 After that, the desired estimate
follows immediately from this representation and Lemma \ref{lA.1}.
Indeed, statement \texttt{(i)} of Lemma \ref{lA.1} tells us that
$\mathcal{E}_{\bar{\beta},\beta_{0}}$ is a complete monotonic
function, which provides non-positivity of the second term $l_{2}$
(recalling that $\bar{F}_{1}\leq 0$).

 As for the term $l_1$, collecting the
representation of $F_{0}(t)$ (see \eqref{5.7*}) with statement (v)
of Lemma \ref{lA.1} arrives at the inequalities
\begin{align*}
|l_2|&\leq
C[1+V(0)][\mathcal{E}_{\bar{\beta},\nu}*1+\rho(0)\mathcal{E}_{\bar{\beta},\nu}*\omega_{1-\nu}+\sum_{i=1}^{M}\rho_{i}(0)\mathcal{E}_{\bar{\beta},\nu}*\omega_{1-\nu_{i}}]\\
& \leq
C[1+V(0)][1+\rho(0)(1+t^{\nu})^{-1}+\sum_{i=1}^{M}\rho_{i}(0)(1+t^{\nu_{i}})^{-1}]
\end{align*}
with the positive constant $C$ being independent of $t$ and
$T^{*}$.

Concerning the term $l_3$, the technical calculations and Lemma
\ref{lA.1} produce the equality
\[
l_3=\bar{\mathcal{V}}*G_{1},
\]
with
\[
G_{1}(t)=\int_{0}^{t}\frac{\mathcal{K}_{0}(t-z)}{\rho^{*}}\mathcal{E}_{\bar{\beta},\nu}(z;\bar{d})dz
\]
satisfying relations
\begin{align}\label{5.15}\notag
G_{1}&\in L_{1}(0,+\infty),\qquad G_{1}(t)\geq 0\quad\text{for all}\quad t\geq 0,\\
\|G_{1}\|_{L_{1}(0,+\infty)}&\leq
C\|\mathcal{K}_{0}\|_{L_{1}(0,+\infty)}\leq
C\|\mathcal{K}\|_{L_{1}(0,+\infty)}.
\end{align}
Here, we also used the definition of $\mathcal{K}_{0}$ (see
\eqref{5.7*}).

At last, gathering estimates of $l_{i}$, we end up with the bound
\[
\bar{\mathcal{V}}\leq
C[1+V(0)][1+\sum_{i=1}^{M}\max\{1,\rho_{i}(0)\}]+G_{1}*\bar{\mathcal{V}}.
\]
Substituting   representation \eqref{5.12} to  this estimate and
employing Proposition \ref{p5.1}, we have
\begin{align*}
V(t)&\leq
C(1+V(0))[1+\rho(0)+\sum_{i=1}^{M}\rho_{i}(0)]+G_{1}*V+G_{1}*\mathcal{U}\\
& \leq
C(1+V(0))[1+\|G_{1}\|_{L_1(0,+\infty)}+\rho(0)+\sum_{i=1}^{M}\rho_{i}(0)]+G_{1}*V
\end{align*}
for each $t\geq 0$.

Finally, taking into account nonnegativity of the function $V(t)$,
estimates \eqref{5.15} and exploiting the Gronwall-type inequality
\cite[Theorem 15]{Dr}, we obtain the bound
\[
V(t)\leq
C[1+V(0)][1+\|\mathcal{K}\|_{L_{1}(0,+\infty)}]\exp\{C\|\mathcal{K}\|_{L_{1}(0,+\infty)}\}
\]
for all $t\geq 0$. Here the positive value $C$ is independent of
$t$ and depends only on the given parameters in the model, and
corresponding norms of the coefficients.

Recalling the definition of the function $V$ in the estimate above,
we finishes the proof of \eqref{3.2} and accordingly  the proof of
Theorem \ref{t3.1}.\qed



\section{Proof of Theorem \ref{t3.2}}
\label{s6}

\noindent First of all we notice that the most arguments of Section
\ref{s5} (with minor modifications) are employed in the
multidimensional case. Thus, here we focus only on the main
differences between the multidimensional and the one-dimensional
cases. Obviously, these differences are related with study given in
the Step 1. Here we also focus on the \textbf{DBC} case
 and, following  \textit{Step 1} of the previous section, we
multiply the equation in \eqref{i.1} by
\[
v=u-\Delta u
\]
and integrate over $\Omega$. In this section (analogously to
Section \ref{s5}), we carry out only formal computations, as for
rigorous analysis, one should use \cite[Lemma 4.4]{KPSV5}.

Performing technical calculations, we have
\begin{equation}\label{6.1}
\sum_{i=1}^{4}\mathfrak{W}_{i}=0,
\end{equation}
where
\begin{align*}
\mathfrak{W}_{1}&=-\int_{\Omega}\bigg[\sum_{i=1}^{n}\frac{\partial^{2}
u}{\partial x_{i}^{2}}+a_{0}u\bigg]vdx,\qquad
\mathfrak{W}_{2}=-\int_{\Omega}\mathcal{K}*\big(\sum_{i=1}^{n}b_{i}\frac{\partial
u}{\partial x_{i}}+b_{0}u\big)vdx,\\
\mathfrak{W}_{3}&=\int_{\Omega}f(u)vdx,\qquad
\mathfrak{W}_{4}=\int_{\Omega}v\mathbf{D}_{t}u dx.
\end{align*}
Integrating by parts and taking into account regularity of the
coefficients (see h3 and h2) arrive at
\[
\mathfrak{W}_{1}\geq [1+\delta_0]\int_{\Omega}|\nabla
u|^{2}dx+\int_{\Omega}(\Delta
u)^{2}dx+\delta_{0}\int_{\Omega}u^{2}dx -
\int_{\Omega}\sum_{i=1}^{n}\frac{\partial a_{0}}{\partial
x_{i}}\frac{\partial u}{\partial x_{i}}udx.
\]
Then, exploiting the Cauchy inequality to handle the last term in
the right-hand side arrives at the bound
\[
\mathfrak{W}_{1}\geq \frac{1+\delta_{0}}{2}\int_{\Omega}|\nabla
u|^{2}dx+\int_{\Omega}(\Delta u)^{2}dx +
[\delta_{0}-\frac{2}{1+\delta_{0}}\|D_{x}
a_{0}\|^{2}_{L_{\infty}([0,+\infty),\C(\bar{\Omega}))}]\int_{\Omega}u^{2}dx.
\]
Coming to the evaluation of $\mathfrak{W}_{2}$ and
$\mathfrak{W}_{3}$, we repeat arguments employing to estimate the
terms $\mathcal{U}_{2}$ and $\mathcal{U}_{3}$ (see \textit{Step 1}
in the proof of Theorem \ref{t3.1}) and deduce the following bounds:
\begin{align*}
\mathfrak{W}_{2}&\geq
-8\sum_{i=0}^{n}\|b_{i}\|^{2}_{L_{\infty}([0,+\infty),\C(\bar{\Omega}))}\|\mathcal{K}\|_{L_{1}(0,+\infty)}
[\delta_{0}^{-1}+1]\int_{\Omega}|\mathcal{K}|*[u^{2}+|\nabla
u|^{2}]dx\\
&
-\frac{\delta_{0}}{8}\int_{\Omega}u^{2}dx-\frac{1}{8}\int_{\Omega}(\Delta
u)^{2}dx,\\
\mathfrak{W}_{3}&\geq
-C_{2}^{*}+\frac{2}{1+\delta_{0}}\|D_{x}a_{0}\|^{2}_{L_{\infty}([0,+\infty),\C(\bar{\Omega}))}\int_{\Omega}u^{2}dx
-\frac{1}{4}\int_{\Omega}(\Delta u)^{2}dx.
\end{align*}
Thus, collecting estimates of $\mathfrak{W}_{i},$ $i=1,2,3,$ ends up
with the inequality
\[
\sum_{i=1}^{3}\mathfrak{W}_{i}\geq
\frac{1+\delta_{0}}{2}\int_{\Omega}|\nabla u|^{2}dx +
\frac{5}{8}\int_{\Omega}(\Delta u)^{2}dx
+\frac{7\delta_{0}}{8}\int_{\Omega}u^{2}dx-C_{4}\int_{\Omega}|\mathcal{K}|*[u^{2}+|\nabla
u|^{2}]dx.
\]

As for the term $\mathfrak{W}_{4}$, we apply  arguments leading to
the estimate of $\mathcal{U}_{4}$ and obtain
\begin{align*}
\mathfrak{W}_{4}&\geq
\frac{1}{2}\bigg\{\rho(t)\mathbf{D}_{t}^{\nu}\int_{\Omega}(u^{2}+|\nabla
u|^{2})dx+\sum_{i=1}^{M}\rho_{i}(t)\mathbf{D}_{t}^{\nu_{i}}\int_{\Omega}(u^{2}+|\nabla
u|^{2})dx\bigg\}\\
&
-\|u_{0}\|^{2}_{W^{1,2}(\Omega)}\bigg\{\rho(0)\omega_{1-\nu}(t)+\sum_{i=1}^{M}\rho_{i}(0)\omega_{1-\nu_{i}}(t)+C_{3}\bigg\}.
\end{align*}
Thus, collecting all estimates of $\mathfrak{W}_{i}$ and using the
same notations to $C_{4}$ and $\mathcal{K}_{0}$, we get inequality
\eqref{5.7} to the function
\[
V(t)=\int_{\Omega}[u^{2}+|\nabla u|^{2}]dx.
\]
Then, to complete the proof of this claim, we just repeat
step-by-step all arguments of \textit{Steps 2-3} from  the proof of
Theorem \ref{t3.2}. \qed




\begin{thebibliography}{60}

\bibitem{AF}
R.A. Adams, J.J.F. Fournier, Sobolev spaces, v. 140, 2nd edition, Academic Press, New York,
2003.

\bibitem{AG}
 E. E. Adams, L. W. Gelhar, Field study of dispersion in a
heterogeneous aquifer: 2. Spatial moments analysis, Water Resour.
Res., \textbf{28}(12) (1992) 3293--3307.

\bibitem{B}
E. Bazhlekova, Completely monotone multinomial Mittag-Leffler type
functions and diffusion equations with multiple time-derivatives,
Fract. Calc. Appl. Anal., \textbf{24}(1) (2021) 88--111.

\bibitem{CDV}
A. Carbotti, S. Dipierro, E. Valdinoci, Local density of solutions
to fractinal equations, De Gruyter Studies in Mathematics, 2019.

\bibitem{DB}
V. Daftardar-Gejji, S. Bhalekar, Boundary value problems for
multi-term fractional differential equations, J. Math. Anal.
Appl., \textbf{345} (2008) 754--765.

\bibitem{Dr}
S.S. Dragomir, Some Gronwall type inequalities and applications,
Nova Science Publishers, Melbourne, 2003.

\bibitem{FKRWS}
M. Fritz, C. Kuttler, M.Z. Rajendran, B. Wohlmuth, L. Scarabosio,
On a subdiffusive tumor growth model with fractional time
derivative, IMA Journal of Appl. Math., \textbf{86}(4) (2021)
688--729.

\bibitem{GPM}
C. Giorgi, V. Pata, A. Marzocchi, Asymptotic behavior of a
semilinear problem in heat conduction with memory, NoDEA,
Nonlinear Differ. Equa. Appl., textbf{5} (1998) 333--354.




\bibitem{Gr}
P. Grisvard, Elliptic problems in nonsmooth domain, Pitman, Boston, 1985.




\bibitem{HL}
S.B. Hadid, Y. Luchko, An operational method for solving fractional differential equations
 of an arbitrary real order, Panam. Math. J., \textbf{6} (1996) 57--3.

\bibitem{HH}
Y. Hatano, N. Hatano, Dispersive transport of ions in column
experiments: An explanation of long-tailed profiles, Water Resour.
Res., \textbf{34}(5) (1998) 1027--1033.



\bibitem{KTT}
A. Kassymov, N. Tokmagambetov, B.T. Torebek, Multi-term
time-fractional diffusion equation and system: mild solutions and
critical exponents, Publ. Math. Debrecen, \textbf{100}(3-4) (2022)
295--321.

\bibitem{KGM}
J.F. Kelly, R.J. McGough, and M.M. Meerschaert, Analytical
time-domain Green's functions for power-law media, J. Acoust. Soc.
Am., \textbf{124}(5) (2008) 2861--2872.

\bibitem{KST}
A.A. Kilbas, H.M. Srivastava, J.J. Trujillo, Theory and
applications of fractional differential equations, North-Holland
Mathematics Studies, \textbf{204}, Elsevier Science B.V.,
Amsterdam, 2006.



\bibitem{KPV2}
M. Krasnoschok, V. Pata, N. Vasylyeva, Semilinear subdiffusion
with memory in the one-dimensional case, Nonlinear Anal.,
\textbf{165} (2017) 1--17.

\bibitem{KPV3}
M. Krasnoschok, V. Pata, N. Vasylyeva, Semilinear subdiffusion with memory
 in multidimensional domains,  Mathematische Nachrichten, \textbf{292} (7) (2019) 1490--1513.

\bibitem{KPSV5}
M. Krasnoschok, V. Pata, S.V. Siryk, N. Vasylyeva, A subdiffusive
Navier-Stokes-Voight system, Physica D, \textbf{409} (2020)
132503.

\bibitem{KV}
M. Krasnoschok,  N. Vasylyeva, Linear subdiffusion in weighted
fractional H\"{o}lder spaces, Evolution Equa. Control Theory,
\textbf{11}(4) (2022) 1455--1487.

\bibitem{LSU}
O.A. Ladyzhenskaia, V.A. Solonnikov, N.N. Ural'tseva,
Linear and quasilinear parabolic equations, Academic
Press, New York, 1968.

\bibitem{LHY}
 Z. Li, X. Huang and M. Yamamoto,
Initial-boundary value problems for multi-term time-fractional
diffusion equations with $x-$dependent coefficients, Evol. Equ.
Control. Theor., \textbf{9} (2020), 153--179.

\bibitem{LY}
Y. Liu and M. Yamamoto,
 Uniqueness of orders and parameters in multi-term time-fractional diffusion equations by exact date,
arXiv.2206.02108v1, (2022).

\bibitem{L}
Yu. Luchko, Operational method in fractional calculus, Fract.
Calc. Appl. Anal., \textbf{2}(4) (1999) 463--488.

\bibitem{LG}
Yu. Luchko, R. Gorenflo, An operational method for solving
fractional differential equations with the Caputo derivatives,
Acta Math. Vietnamica, \textbf{24} (1999) 207--233.

\bibitem{LSY}
Y. Luchko, A. Suzuki and M. Yamamoto, On the maximum principle for
the multi-term fractional transport equation, J. Math. Anal.
Appl., \textbf{505} (2022), 125579.

\bibitem{MMPG}
 F. Mainardi, A. Mura, G. Pagnini and R. Gorenflo,
Time-fractional diffusion of distributed order, J. Vib. Control,
\textbf{14}(9-10) (2008) 1267--1290.

\bibitem{MGSKA}
V.F.  Marales-Delgado,  J.F. G\'{o}mez-Aguilar, K.M. Saad, M.A.
Khan,  P.  Agarwal,
  Analytical solution for oxygen diffusion from capillary to tissues involving external force effects:
   A fractional calculus approach, Physica A, \textbf{523} (2019) 48--65.

\bibitem{MKS}
R. Metzler, J. Klafter, and I. M. Sokolov, Anomalous transport in
external fields: Continuous time random walks and fractional
diffusion equations extended, Phys. Rev. E, \textbf{58}(2) (1998)
1621--1633.



\bibitem{PSV}
V. Pata, S.V. Siryk, N. Vasylyeva, Multi-term fractional linear equation modeling oxygen subdiffusion through capillaries,
 https://doi.org/10.48550/arXiv.2210.05009, (2022), 30 p, appear
 in Differential and Integral Equations.



 \bibitem{SKM}
 S.G. Samko, A.A. Kilbas, O.I. Marichev, Fractional integrals and
 derivatives: theory and applications, Gordon and Breach: Yverdon,
 1993.


 \bibitem{SV}
 S.V. Siryk, N. Vasylyeva, Initial-boundary value problems to
 semilinear multi-term fractional differential equations, Comm.
 Pure Appl. Anal., \textbf{22}(7) (2023) 2321--2364.

\bibitem{SBMB}
R. Schumer, D.A. Benson, M.M. Meerschaert, and B. Baeumer, Fractal
mobile/immobile solute transport, Water Res. Research.,
\textbf{39}(10) (2003) 13 pp.


\bibitem{V}
 N. Vasylyeva, Cauchy-Dirichlet problem to semilinear multi-term fractional differential equations,
 Fractal Fractional, \textbf{7}(3) (2023) 249.

 \bibitem{V1}
 N. Vasylyeva, Semilinear multi-terms fractional in time diffusion with memory,
submitted 2024.

 \bibitem{VZ}
V. Vergara, R. Zacher, Lyapunov functions and convergence to steady
state for differential equations of fractional order, Math. Z.,
\textbf{259} (2008) 287--309.


 \bibitem{WX}
 D. Wang, A. Xiao, Dissipativity and contractivity for fractional
 order systems, Nonlinear Dynamics, \textbf{80} (2015) 287--294.

\end{thebibliography}
\end{document}